\newtheorem{theorem}{Theorem}[section]
\newtheorem{corollary}[theorem]{Corollary}
\newtheorem{conjecture}[theorem]{Conjecture}
\newtheorem{lemma}[theorem]{Lemma}
\newtheorem{problem}[theorem]{Problem}
\theoremstyle{definition}
\newtheorem{definition}[theorem]{Definition}
\newtheorem{observation}[theorem]{Observation}
\def\VEC#1#2#3{{#1}_{#2},\ldots,{#1}_{#3}}
\def\st{\colon\,}
\def\la{\langle}
\def\ra{\rangle}
\def\CL#1{\left\lceil{#1}\right\rceil}
\def\FL#1{\left\lfloor{#1}\right\rfloor}
\def\CH#1#2{\left({#1 \atop #2}\right)}
\def\hG{\hat G}
\def\tK{\widetilde K}
\def\C#1{\left\vert #1\right\vert}
\def\esub{\subseteq}
\begin{document}

\title{\bf\Large Some new results on bar visibility of digraphs}

\author{Yuanrui Feng\thanks{School of Mathematics, Tianjin University, Tianjin, China.}\,
\qquad
Jun Ge\thanks{School of Mathematical Sciences, Sichuan Normal University, Chengdu, China: mathsgejun@163.com.
Supported by NNSF of China under Grant NSFC-11701401.} \qquad
Douglas B. West\thanks{Zhejiang Normal University, Jinhua, China, and
University of Illinois, Urbana, USA: dwest@math.uiuc.edu.
Supported by NNSF of China under Grant NSFC-11871439.}
\qquad
Yan Yang\thanks{School of Mathematics, Tianjin University, Tianjin, China: yanyang@tju.edu.cn. Supported by NNSF of China under Grant NSFC-11971346. } }

\date{\today}

\maketitle

\begin{abstract}
Visibility representation of digraphs was introduced by Axenovich, Beveridge,
Hutch\-inson, and West (\emph{SIAM J. Discrete Math.} {\bf 27}(3) (2013)
1429--1449) as a natural generalization of $t$-bar visibility representation of
undirected graphs.  A {\it $t$-bar visibility representation} of a digraph $G$
assigns each vertex at most $t$ horizontal bars in the plane so that there is
an arc $xy$ in the digraph if and only if some bar for $x$ ``sees'' some bar
for $y$ above it along an unblocked vertical strip with positive width.
The {\it visibility number} $b(G)$ is the least $t$ such that $G$ has a $t$-bar
visibility representation.  In this paper, we solve several problems about
$b(G)$ posed by Axenovich et al.\ and prove that determining whether the bar
visibility number of a digraph is $2$ is NP-complete.

\medskip

\noindent {\bf Keywords:} bar visibility number, graph representation, transitive tournament, NP-complete.

\smallskip
\noindent {\bf Mathematics Subject Classification (2010): 05C62, 05C35, 05C10,
68Q17}
\end{abstract}

\section{Introduction}

Visibility representation of graphs has been studied extensively in
computational geometry and has important application in VLSI design, computer
vision, etc.; for a book devoted to the topic, see Ghosh~\cite{Gho}.
Among various types of visibility representations of graphs, we focus here
on bar visibility representation in the plane.

A graph $H$ is a {\it bar visibility graph} if each vertex can be assigned a
horizontal line segment in the plane (called a {\it bar}) so that vertices are
adjacent if and only if the corresponding bars can see each other along an
unblocked channel, where a {\it channel} is a vertical strip of positive
width.  The assignment of bars is a {\it bar visibility representation} of $H$.
Tamassia and Tollis~\cite{Tamassia86} and Wismath~\cite{Wismath85} characterized
bar visibility graphs (see Hutchinson~\cite{Hutch} for another proof).

\begin{theorem}[\cite{Tamassia86,Wismath85}]\label{barvG}
A graph $H$ has a bar visibility representation if and only if $H$ can be
embedded in the plane so that all cut-vertices appear on the boundary of one
face.
\end{theorem}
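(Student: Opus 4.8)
The plan is to prove both directions of the equivalence; the reverse implication --- building a representation from the embedding condition --- is where essentially all the work lies. For the forward direction, suppose $H$ has a bar visibility representation. Redraw each edge $uw$ as a vertical segment inside a channel that witnesses the visibility of a bar of $u$ with a bar of $w$; after an arbitrarily small perturbation these segments are pairwise disjoint and meet the bars only at their endpoints, so together with the bars they form a planar drawing $D$ of $H$, and in particular $H$ is planar. It then remains to place all cut-vertices on one face. I would show that every cut-vertex $v$ is incident to the unbounded face $F_\infty$ of $D$: writing $V(H)\setminus\{v\}=A\cup B$ with $A$ a component of $H-v$ and $B$ the union of the others (so no edge joins $A$ to $B$), the subdrawings induced on $A\cup\{v\}$ and on $B\cup\{v\}$ meet only along the bar $b_v$; if $b_v$ had no point on $\partial F_\infty$, then one of these subdrawings would surround a portion of the other across $b_v$, and following the channels in such a configuration forces a visibility, hence an edge, between a bar of $A$ and a bar of $B$, a contradiction. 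Thus all cut-vertices lie on $F_\infty$.

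For the reverse direction, I would first dispose of the building blocks. A single vertex or a single edge is immediate. For a $2$-connected plane graph and any two vertices $s,t$ on its outer face, there is an $st$-orientation --- an acyclic orientation with unique source $s$, unique sink $t$, and with the in-neighbours (and separately the out-neighbours) of every other vertex consecutive in its rotation --- obtainable from the classical $st$-numbering of Lempel, Even and Cederbaum. Feeding such an orientation into the Tamassia--Tollis construction (bar of $u$ at height equal to the length of a longest directed path from $s$, horizontal extents read off from the planar dual) yields a bar visibility representation in which $b_s$ is the unique lowest bar and $b_t$ the unique highest bar; moreover either of these two bars may be widened at will without creating new visibilities, since beyond its current horizontal span it sees only the outer face.

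The general case then follows by induction on the number of blocks, using the block-cut tree and carrying a strengthened hypothesis: \emph{if $H$ is planar with all cut-vertices on a common face $F$, then for any prescribed vertex of $F$, or any two distinct prescribed vertices of $F$, there is a representation of $H$ whose unique highest bar is the first prescribed vertex} (\emph{and whose unique lowest bar is the second}). In the inductive step, if the prescribed top vertex $v$ is a cut-vertex of $H$, split $H$ at $v$ into the pieces $H_1,\dots,H_k$ hanging off $v$ (each the union of one block at $v$ with everything beyond it in the block-cut tree), represent each $H_i$ recursively with $b_v$ as its unique highest bar, place these representations in pairwise disjoint horizontal ranges, and let them share one widened bar $b_v$ across the top; a piece containing a prescribed bottom vertex is in addition translated low enough to carry the unique lowest bar. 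If $v$ is not a cut-vertex, peel off a suitable leaf block $B$ of the block-cut tree, attached to the rest of $H$ at a cut-vertex $w$; represent $B$ by the $2$-connected case (trivially if $B$ is an edge) with $b_w$ lowest, represent $H':=H-(V(B)\setminus\{w\})$ --- which is again planar with its cut-vertices and the relevant prescribed vertices on a common face --- recursively with $b_w$ at the appropriate extreme, and stack the two pieces along the shared bar $b_w$, widening it to span both sides so that it blocks every visibility between them. Disconnected graphs reduce to the connected case by placing the component representations in disjoint horizontal ranges.

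I expect the main obstacle to be the bookkeeping in this induction rather than any isolated hard idea: one must pin down the exact form of the strengthened hypothesis --- how many vertices may be prescribed, at which extreme, and whether they may coincide --- so that it is at once strong enough to survive every stacking and every side-by-side merge (notably when a cut-vertex belongs to three or more blocks, some placed above its bar and some below) and literally true in every base case, and one must verify that each merge along a widened shared bar creates no new visibilities between the glued parts. A smaller but still genuine point is making rigorous the claim, used in the forward direction, that a cut-vertex cannot be geometrically enclosed by its own drawing.
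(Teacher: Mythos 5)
The paper does not prove this statement: Theorem~\ref{barvG} is quoted from Tamassia--Tollis and Wismath (with Hutchinson cited for an alternative proof), so there is no in-paper argument to measure yours against. Your outline is, in substance, the classical proof from those sources: necessity by turning the bars and witnessing channels into a planar drawing and arguing that a cut-vertex cannot be enclosed, sufficiency by the $st$-numbering construction for $2$-connected plane graphs followed by a block--cut-tree induction that glues pieces along a shared, widened bar at each cut-vertex. Both halves are sound in outline. For the enclosure claim in the forward direction, the argument you gesture at does close up: a simple closed curve of the drawing surrounding $b_v$ cannot use $b_v$ or the dangling half-edges into it, hence lies in the sub-drawing of a single component $A$ of $H-v$; the connected union of $b_v$ with the sub-drawings of the remaining components is then trapped inside that curve, and a generic vertical channel from its extreme (topmost or bottommost) non-$v$ bar must first meet an $A$-bar, yielding a forbidden visibility between $A$ and another component.

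The one place where your plan, taken literally, breaks is the bookkeeping you yourself flag. In the leaf-block step, $H'=H-(V(B)\setminus\{w\})$ may already carry two prescribed extremes $u,v$ with $w\notin\{u,v\}$, and you then need $b_w$ to be a third extreme of $H'$ in order to stack $B$ strictly above or below it --- three prescriptions, two extremes. The standard repair is exactly the \emph{displayed} condition that this paper later defines in Section~5: strengthen the inductive hypothesis so that every vertex of the common face has a bar reachable by an unbounded channel. Then a leaf block at $w$ need not be stacked over all of $H'$; it can be shrunk horizontally and inserted into the unbounded channel over (or under) the exposed portion of $b_w$, where it sees nothing of $H'$ except $b_w$, leaving the global top and bottom untouched. (Alternatively one arranges consecutive blocks in a staircase along the common face, in the style of Figure~\ref{staircase}.) With that strengthening carried through the induction --- and it holds in the $2$-connected base case, since the source and sink bars span the full width and every outer-face vertex is exposed --- your argument goes through.
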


Chang, Hutchinson, Jacobson, Lehel, and West~\cite{Chang04} extended this
concept to all graphs by introducing $t$-bar visibility representations of
graphs.  A {\it $t$-bar visibility representation} of a graph $H$ assigns each
vertex up to $t$ bars in the plane so that two vertices are adjacent if and
only if some bar for one vertex can see some bar for the other via an
unblocked channel.  The least $t$ such that $H$
has a $t$-bar visibility representation is called the {\it bar visibility
number} of $H$, denoted by $b(H)$.

Axenovich, Beveridge, Hutchinson, and West \cite{ABHW2013} introduced an
analogue for directed graphs.  A {\it $t$-bar visibility representation} of a
digraph $G$ assigns each vertex at most $t$ bars in the plane so that there is
an arc $xy$ in the digraph if and only if some bar for $x$ sees some bar for
$y$ above it via an unblocked channel.  The {\it
bar visibility number} $b(G)$ of a digraph $G$ is the least $t$ such that $G$
has a $t$-bar visibility representation.  Digraphs with bar visibility number
$1$ are {\it bar visibility digraphs}.

In a digraph, a vertex is a {\it source} or a {\it sink} if it has indegree $0$
or outdegree $0$, respectively. A {\it consistent cycle} is an oriented cycle
with no source or sink.  Tomassia and Tollis~\cite{Tamassia86} and
independently Wismath~\cite{Wismath89} characterized bar visibility digraphs.

\begin{theorem}[\cite{Tamassia86, Wismath89}]\label{barvdig}
Let $G$ be a digraph, and let $G'$ be the digraph formed from $G$ by adding two
vertices $s$ and $t$, an arc $sv$ for every source vertex $v$ in $G$, an arc
$wt$ for every sink vertex $w$, and the arc $st$. A digraph $G$ is a bar
visibility digraph if and only if $G'$ is planar and has no consistent cycle.
\end{theorem}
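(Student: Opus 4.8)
The plan is to prove both directions of the equivalence. For the forward implication I would enlarge a bar arrangement for $G$ to one whose digraph contains $G'$ and then invoke Theorem~\ref{barvG}; for the reverse implication I would recognize $G'$ as a planar $st$-graph and use the classical visibility representation of such graphs, afterward deleting the bars for $s$ and $t$.

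\emph{A representation of $G$ yields the properties of $G'$.} Suppose $G$ has a $1$-bar visibility representation, with $I_v$ the bar of $v$ and $y(v)$ its height. An arc $uv$ puts $I_v$ strictly above $I_u$, so $y$ strictly increases along arcs and $G$ is acyclic; since a consistent cycle is a directed cycle, and a directed cycle of $G'$ could use neither the source $s$ nor the sink $t$ and so would lie in the acyclic graph $G$, the digraph $G'$ has no consistent cycle. For planarity I would add to the arrangement a long bar $I_s$ below every existing bar and a long bar $I_t$ above every existing bar, each extended far to the left of all original bars so that $I_s$ sees $I_t$. Lying at the extreme heights, $I_s$ and $I_t$ block no channel already present, so every arc of $G$ and also the arc $st$ is realized. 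Moreover, if $v$ is a source of $G$ then $I_v$ is visible from below: otherwise the projection of $I_v$ is entirely covered by bars beneath it, and for a point $x_0$ of that projection chosen in generic position, the highest of the bars beneath $I_v$ whose projection contains $x_0$ would see $I_v$ through a positive-width channel around $x_0$, producing an arc into $v$. Hence $I_s$ sees the bar of every source of $G$, and symmetrically $I_t$ sees the bar of every sink. Consequently the digraph $\hat G$ represented by the enlarged arrangement has vertex set $V(G')$ and contains every arc of $G'$, so $G'$ is a subgraph of $\hat G$. Since $\hat G$ is a bar visibility digraph, its underlying graph is a bar visibility graph and hence planar by Theorem~\ref{barvG}; therefore $G'$ is planar.

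\emph{The properties of $G'$ yield a representation of $G$.} Conversely, suppose $G'$ is planar with no consistent cycle, so $G'$ is acyclic. Every vertex of $G$ either is a source of $G$, hence receives the arc from $s$, or has an in-neighbor in $G$; thus $s$ is the only source of $G'$, and symmetrically $t$ is its only sink. Also $G'$ is connected, since each component of $G$ is a nonempty acyclic digraph and therefore contains a source of $G$, which is joined to $s$ in $G'$. The arc $st$ places $s$ and $t$ on a common face, which I would take to be the outer face, so that $G'$ is a planar $st$-graph. I would then apply the construction of Tamassia and Tollis~\cite{Tamassia86}, producing from any planar $st$-graph a $1$-bar visibility representation that realizes exactly its arcs and in which $s$, as the unique source, is the unique lowest bar and $t$ the unique highest. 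Deleting the two extreme bars $I_s$ and $I_t$ changes no channel between surviving bars, so the result is a $1$-bar visibility representation of $G' - s - t = G$, whence $G$ is a bar visibility digraph.

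\emph{The main obstacle.} The fussy part of the forward direction is the genericity behind the ``visible from below'' claim: one must choose $x_0$ to avoid the finitely many bar-endpoints so that the witnessing channel truly has positive width, and observe that a vertical column blocked from below passes through a bar that itself sees $I_v$. The substance of the reverse direction is the visibility representation of planar $st$-graphs, obtained from a topological numbering of $G'$ and the two orderings of vertices and edges it induces via the embedding; I would cite this rather than reprove it, checking only the small point that $s$ and $t$ come out as the unique extreme bars so that they may be deleted without disturbing the rest.
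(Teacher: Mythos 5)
The paper does not actually prove this statement: it is quoted verbatim from Tamassia--Tollis and Wismath with a citation, so there is no in-paper argument to compare yours against. Judged on its own, your reconstruction is correct and is essentially the standard proof. The forward direction is sound: the height function certifies acyclicity of $G$ (hence of $G'$, since $s$ and $t$ are a source and a sink of $G'$ and a consistent cycle is a directed cycle), and your genericity argument correctly shows that a source's bar must be visible from below, so the augmented arrangement represents a planar bar visibility digraph containing $G'$ as a subgraph. The reverse direction correctly reduces to the visibility construction for planar $st$-graphs; since that construction is the real content of the cited theorem, importing it is the honest move, and your check that $s$ and $t$ are the unique extreme bars (so they can be deleted without creating new visibilities) is the right remaining verification. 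One small point worth making explicit: for the cited construction to realize \emph{exactly} the arcs (the ``only if'' half of visibility), one uses that $G'$ is $2$-connected; this is automatic here, since $G'$ is acyclic with unique source $s$ and unique sink $t$ joined by the arc $st$, so a cut vertex would force a directed cycle. Recording that observation would close the only loose end in your sketch.
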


Thus planarity is necessary but not sufficient for $b(G)=1$.
Axenovich, Beveridge, Hutchinson, and West~\cite{ABHW2013} showed that
$b(G)\leq 4$ when $G$ is a planar digraph, $b(G)\leq 2$ when $G$ is
outerplanar, and in general $b(G)\leq (\C{V(G)}+10)/3$.  For outerplanar
digraphs, West and Wise \cite{West17} gave a forbidden substructure
characterization for those with $b(G)=1$.

A {\it tournament} is an orientation of a complete graph.  A tournament $T$ is
{\it transitive} if $xz$ is an arc whenever $xy$ and $yz$ are arcs.  In
particular, $T$ is transitive if and only if there is a linear ordering of the
vertices such that $xy$ is an arc if and only if $x$ precedes $y$ in the
ordering.  Up to isomorphism, there is only one transitive tournament on $n$
vertices, denoted by $T_n$.  In \cite{ABHW2013}, the authors gave the exact
value of $b(T_n)$ for $1\leq n\leq 15$ except for $n\in\{11,12\}$, and they
gave two upper bounds for $b(T_n)$ by using Steiner systems.

\begin{theorem}[\cite{ABHW2013}]\label{smalln}
The bar visibility number of the transitive tournament $T_n$ satisfies
\begin{displaymath}
b(T_n) = \left\{ \begin{array}{ll}
1, & \text{if ~$1\leq n\leq 4$}, \\
2, & \text{if ~$5\leq n\leq 10$}, \\
3, & \text{if ~$13\leq n\leq 15$.}
\end{array} \right.
\end{displaymath}
\end{theorem}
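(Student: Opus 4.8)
The plan is to combine a counting lower bound coming from planarity with explicit constructions of small representations for the upper bounds.

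\medskip\noindent\textbf{Lower bounds.} In any $t$-bar visibility representation of a digraph $G$ on $n$ vertices there are at most $tn$ bars, and the relation ``these two bars see each other through an unblocked channel'' is the adjacency relation of a bar visibility graph, hence of a planar graph (Theorem~\ref{barvG}). Since each arc $xy$ of $G$ is witnessed by at least one such visibility, between a bar of $x$ and a bar of $y$, and the witnessing visibility determines the ordered pair $xy$, we get $|A(G)|\le 3tn-6$ whenever $tn\ge 3$. Apply this with $G=T_n$, where $|A(T_n)|=\binom n2$. For $t=1$ the inequality $\binom n2\le 3n-6$ is $(n-3)(n-4)\le 0$, false for $n\ge 5$, so $b(T_n)\ge 2$ when $5\le n\le 10$. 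For $t=2$ the inequality $\binom n2\le 6n-6$ is $(n-1)(n-12)\le 0$, false for $n\ge 13$, so $b(T_n)\ge 3$ when $13\le n\le 15$. (The same count does not obstruct $b(T_{11})=2$ or $b(T_{12})=2$, which is exactly why those values are left open.)

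\medskip\noindent\textbf{Upper bound for $n\le 4$, and reducing the rest.} Here $b(T_n)=1$. Indeed $T_4$ is acyclic with unique source $a$ and unique sink $d$, so the auxiliary digraph $T_4'$ of Theorem~\ref{barvdig} (add $s$ with arc $sa$, add $t$ with arc $dt$, add arc $st$) is still acyclic and therefore has no consistent cycle; and $T_4'$ is planar --- draw $K_4$ on $\{a,b,c,d\}$ with $a$ and $d$ both on the outer face, attach $s$ near $a$ and $t$ near $d$, and route $st$ around the outside. Theorem~\ref{barvdig} then gives $b(T_4)=1$, and $n\le 3$ is similar. For the remaining ranges it now suffices to produce a $2$-bar representation of $T_{10}$ and a $3$-bar representation of $T_{15}$ in which all bars of the source vertex lie strictly below every other bar: deleting those bars creates no new visibility (a bar that blocks a visibility must lie strictly between its two endpoints in height, and bars at the very bottom block nothing), leaving a valid representation of the next-smaller transitive tournament; iterating handles all intermediate $n$.

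\medskip\noindent\textbf{The two constructions (the main obstacle).} What remains is to exhibit those two explicit arrangements. A natural first idea is to cover the arcs of $T_n$ by transitive sub-tournaments on at most $4$ vertices (each a bar visibility digraph by the previous paragraph) realized side by side, but a quick count shows this uses far too many bars --- roughly $(n-1)/3$ of them --- so it cannot reach $t=2$ at $n=10$ or $t=3$ at $n=15$. The efficiency instead has to come from letting a single bar participate in many visibilities simultaneously, so I would build a direct global layout: place the $tn$ bars at heights respecting the linear order of $T_n$ and choose their horizontal spans so that for every pair $u<v$ at least one bar of $u$ sees a bar of $v$ above it, while no bar of $v$ ever sees a bar of $u$ above it (which together force the visibility digraph to be exactly $T_n$). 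Verifying that such a hand-tailored configuration realizes \emph{exactly} $A(T_n)$ --- creating none of the forbidden ``wrong-pair'' or ``wrong-direction'' visibilities and losing none of the required ones --- is the crux, and for $n$ as large as $10$ (with two bars per vertex) and $15$ (with three) I expect most of the work to go into designing and checking these two pictures.
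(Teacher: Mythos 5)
First, note that this statement is quoted from \cite{ABHW2013}; the present paper does not prove it, and even its own construction (Theorem~\ref{n/4}, giving $b(T_n)\le\lceil n/4\rceil$) only yields $b(T_{10})\le 3$ and $b(T_{15})\le 4$, so the tight upper bounds genuinely live in the cited reference. Your lower-bound half is correct and is exactly the standard argument: the derived graph of a $t$-bar representation is a simple planar graph on at most $tn$ vertices with at least $\binom{n}{2}$ edges, so $\binom{n}{2}\le 3tn-6$, which fails for $t=1,n\ge 5$ and for $t=2,n\ge 13$. The case $n\le 4$ via Theorem~\ref{barvdig} is also fine.

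The genuine gap is that the two upper bounds that carry all the difficulty --- a $2$-bar representation of $T_{10}$ and a $3$-bar representation of $T_{15}$ --- are never produced. Your final paragraph only describes an intention to ``build a direct global layout'' and concedes that ``most of the work'' would go into designing and checking these pictures; as you yourself observe, no generic covering argument (by small transitive subtournaments, or by the $\lceil n/4\rceil$ path decomposition) reaches $t=2$ at $n=10$ or $t=3$ at $n=15$, so these configurations must actually be exhibited and verified, and that is precisely the content of the theorem in \cite{ABHW2013}. A proof that ends where the essential construction begins is a plan, not a proof. Secondarily, your deletion step does not iterate as stated: after removing the bottom-most source's bars from the representation of $T_{10}$, the new source's bars need not be bottom-most, so ``bars at the very bottom block nothing'' no longer applies at the next step. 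The correct justification is the one in Lemma~\ref{n+2}: by transitivity of $T_n$, removing \emph{any} vertex's bars cannot create an unwanted visibility, since any newly revealed visibility corresponds to a directed path and hence to an existing arc.
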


\begin{theorem}[\cite{ABHW2013}]\label{upper}
The bar visibility number of the transitive tournament $T_n$ satisfies\\
(1) $b(T_n)\leq \frac{7n}{24}+2\sqrt{n\log n}$;\\
(2) $b(T_n)< \frac{3n}{14}+42$ when n is sufficiently large.
\end{theorem}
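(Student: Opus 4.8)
The plan is to realize $T_n$ by gluing together many small ``gadget'' representations, one for each block of a combinatorial design on the vertex set. Fix a block size $k$, and suppose first that a Steiner system $S(2,k,n)$ exists on the vertices of $T_n$. By Theorem~\ref{smalln} (or a direct construction) there is a $b(T_k)$-bar visibility representation $R_k$ of $T_k$. For each block $B$ of the design, restricting the linear order of $T_n$ to $B$ identifies $(B,\le)$ with $T_k$, so we may place a horizontal translate of $R_k$ in its own vertical strip of the plane and thereby represent exactly the arcs inside $B$. Keeping the strips pairwise disjoint in the $x$-coordinate ensures that no bar of one gadget sees a bar of another, so the union of the gadgets has an arc $ij$ if and only if $i<j$ and $\{i,j\}$ lies in some block; since every pair lies in exactly one block, this union is a visibility representation of $T_n$. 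Each vertex $v$ receives $b(T_k)$ bars in each of the $\frac{n-1}{k-1}$ blocks through it, so $b(T_n)\le b(T_k)\cdot\frac{n-1}{k-1}$ whenever the Steiner system exists.

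Minimizing $\frac{b(T_k)}{k-1}$ over the values provided by Theorem~\ref{smalln} singles out $k=15$, where $b(T_{15})=3$ gives ratio $\frac{3}{14}$; this is the route to bound~(2). To handle general (large) $n$ I would relax the requirement from an exact $S(2,15,n)$ to a near-balanced pair covering of the vertices of $T_n$ using only blocks of sizes $14$ and $15$ — both of which have bar visibility number $3$ by Theorem~\ref{smalln} — in which every vertex lies in at most $\frac{n-1}{14}+O(1)$ blocks. Such coverings exist once $n$ is large (this is where Wilson's theorem and the theory of pairwise balanced designs enter), so the gadget-gluing argument gives $b(T_n)\le \frac{3(n-1)}{14}+O(1)$, which is below $\frac{3n}{14}+42$ for $n$ large; the hypothesis ``$n$ sufficiently large'' absorbs both the onset of the design family and the additive slack.

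For bound~(1), which must hold for \emph{every} $n$, the same mechanism is used, but with a Steiner system whose block size is chosen so that a suitable system is actually available at the given $n$, at the cost of a less favorable ratio. The coefficient $\frac{7}{24}$ and the lower-order term come out of bounding, uniformly in $n$, the largest number of blocks through a point of the design used, together with the values of $b(T_k)$ from Theorem~\ref{smalln} for the relevant small $k$; the term $2\sqrt{n\log n}$ is the price of passing from the ideal (divisible) orders to arbitrary $n$, and is obtained from standard estimates on the spectrum of small Steiner systems and on gaps between prime powers.

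The main obstacle is precisely this passage to arbitrary $n$. Once a design is in hand the inequality $b(T_n)\le b(T_k)\cdot\frac{n-1}{k-1}$ is easy, but $b(\cdot)$ is not visibly monotone under induced subdigraphs — deleting the bars of one vertex can \emph{open} a previously blocked channel and hence create a forbidden downward visibility — so one cannot simply invoke $b(T_n)\le b(T_{n'})$ for $n\le n'$. Consequently one must produce, for each residue class of $n$, an explicit covering design all of whose block sizes have known small bar visibility number while keeping every point in close to $\frac{n}{k}$ blocks; controlling these two quantities simultaneously and uniformly in $n$ is the crux of the argument, and the two incomparable bounds in the statement reflect two different compromises in that trade-off.
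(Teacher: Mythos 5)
First, note that this theorem is quoted from \cite{ABHW2013}; the present paper does not prove it, but it does sketch the underlying mechanism right after Lemma~\ref{upper2}: take $m\ge n$ with $m\le n+c_l$ such that $K_m$ decomposes into copies of $K_l$ (Wilson's theorem), represent each induced copy of $T_l$ separately with $b(T_l)$ bars per vertex, and then delete the bars of $m-n$ vertices. Your gadget-gluing argument for bound~(2) is essentially this, and the inequality $b(T_n)\le b(T_k)\cdot\frac{n-1}{k-1}$ with $k=15$, $b(T_{15})=3$ is the right engine. However, the ``main obstacle'' you identify is not actually an obstacle for transitive tournaments, and your attempt to route around it obscures the cleanest version of the argument. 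Deleting the bars of a vertex $c$ can indeed open a channel between a lower bar for $u$ and a higher bar for $w$, but then in the original representation there was a chain of visibilities $u\to c_1\to\cdots\to w$ through the deleted bars in that channel, each a genuine arc, and transitivity forces $uw$ to be an arc as well; hence no forbidden visibility is created. This is exactly Lemma~\ref{n+2} of the paper ($b(T_n)\le b(T_{n+1})$), and it is what lets one pass from the ideal design order $m$ to an arbitrary $n$ by simple deletion. Your substitute --- near-balanced coverings by blocks of sizes $14$ and $15$ --- would also work in principle (duplicated visibilities are harmless), but it is an unnecessary complication whose existence and balance properties you do not establish.

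The genuine gap is bound~(1), which must hold for \emph{all} $n$, not just large $n$. Your account of where $\frac{7}{24}$ and $2\sqrt{n\log n}$ come from is a placeholder, not an argument: you do not name the block sizes, the design, or the estimate that produces the error term. (The constant $\frac{7}{24}=\frac12\left(\frac13+\frac14\right)$ is the average of the ratios $\frac{b(T_4)}{4-1}$ and $\frac{b(T_9)}{9-1}$, which signals a construction mixing blocks of sizes $4$ and $9$, or a two-stage group-divisible decomposition, rather than a single Steiner system; and the $\sqrt{n\log n}$ term has to be extracted from explicit, all-$n$ design or number-theoretic estimates, not merely asserted.) As written, part~(1) of the theorem is not proved by your proposal.
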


Axenovich et al.~\cite{ABHW2013} posed two open problems and two conjectures
that we address here.

\begin{problem}[\cite{ABHW2013}]\label{problem1}
What is the least $\alpha$ such that always $b(T_n)\leq \alpha n+c$ for some
fixed $c$?
\end{problem}

\begin{problem}[\cite{ABHW2013}]\label{problem2}
What is $\lim\limits_{n \to \infty} {b(T_n)}/{n}$ (if the limit exists)?
\end{problem}

\begin{conjecture}[\cite{ABHW2013}]\label{con1}
$b(T_{11})=3$.
\end{conjecture}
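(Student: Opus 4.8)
The plan is to prove the two inequalities $b(T_{11})\le 3$ and $b(T_{11})\ge 3$ separately. For the upper bound I would first establish the monotonicity $b(T_m)\le b(T_n)$ whenever $m\le n$. Given any $t$-bar representation of $T_n$, delete all bars of the source vertex (the vertex of indegree~$0$); I claim what remains represents $T_{n-1}$. Removing bars can only remove obstructions, hence can only create new visibilities, so it suffices to rule those out. A new visibility would be a channel in which some bar $P$ now sees a bar $Q$ above it while in the original representation the lowest bar strictly between them---equivalently, the first bar above $P$ in that channel---belonged to the deleted source. But $P$ sees the first bar above it in any channel, so that bar cannot belong to the source, which has indegree~$0$; contradiction. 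Iterating gives $b(T_{11})\le b(T_{13})$, and $b(T_{13})=3$ by Theorem~\ref{smalln}. (One could instead simply exhibit a $3$-bar representation of $T_{11}$.)

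For the lower bound, suppose $T_{11}$ had a $2$-bar representation. Taking the bars as vertices and direct visibility as arcs yields a digraph $\hat{G}$ that is itself a bar visibility digraph, so by Theorem~\ref{barvdig} it is acyclic and, after adjoining a source $s$ joined to every source of $\hat{G}$, a sink $t$ joined from every sink, and the arc $st$, the enlarged digraph $\hat{G}^{+}$ is planar. Grouping the bars of each vertex of $T_{11}$ partitions $V(\hat{G})$ into classes $V_1,\dots,V_{11}$ with $|V_i|\le 2$, each $V_i$ independent (no loops in $T_{11}$), every arc running from a lower-indexed class to a higher-indexed one (no backward arcs in $T_{11}$), and at least one arc between every pair of classes (all arcs of $T_{11}$ are present). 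Thus $\hat{G}$ has $N\le 22$ vertices and at least $\binom{11}{2}=55$ edges. If $k$ of the classes are singletons then $N=22-k$ and planarity forces $55\le 3N-6=60-3k$, so $k\le 1$: at least ten classes have size~$2$, $N\ge 21$, and $\hat{G}$ lies within $5$ edges of a triangulation.

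The crux is to turn this thin margin into a contradiction. Since both bars of $V_1$ are sources of $\hat{G}$ and both bars of $V_{11}$ are sinks, $\hat{G}^{+}$ has several more edges than $\hat{G}$, so $\hat{G}^{+}$ too is within a few edges of a triangulation on $N+2$ vertices; hence a planar embedding of $\hat{G}^{+}$ has almost every face a triangle. In the associated upward-planar drawing each triangular face is either a transitive triangle on three bars from three \emph{distinct} classes, or a triangle through $s$ on two adjacent source bars, or one through $t$ on two adjacent sink bars. I would then bound the number of such faces by a discharging argument that plays the near-triangulation count against the constraint $|V_i|\le 2$ for each of the eleven classes and against the structure of the rotation system (in an upward-planar drawing the in-arcs and the out-arcs at each vertex are contiguous, and each face has a unique local minimum and maximum). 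The hard part---and the reason the cases $n\in\{11,12\}$ were left open---is precisely that the margin is minuscule: the analogous count is already tight for $n=12$ and fails comfortably for $n\le 10$, so closing it seems to require a genuine case analysis on the arc-multiplicities $e_{ij}\in\{1,2,3,4\}$ between classes and on the local picture at the few necessarily high-degree bars of $V_1$ and $V_{11}$, with a finite (possibly computer-assisted) verification as a fallback.
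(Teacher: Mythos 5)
Your upper bound is fine and matches the paper: monotonicity of $b(T_n)$ under vertex deletion (the paper's Lemma~\ref{n+2}) together with $b(T_{13})=3$ gives $b(T_{11})\le 3$. The problem is the lower bound, which is where the actual content of the conjecture lies, and there your argument has a genuine gap: the Euler-formula count on the derived graph of a hypothetical $2$-bar representation, with at most $22$ bars, gives only $55\le 3\cdot 22-6=66$ (or $55\le 60-3k$ after your singleton refinement), which is not a contradiction. You recognize this and propose to close the margin by a discharging/case analysis on near-triangulations of $\hat{G}^{+}$, but you do not carry it out; as written, the proof of $b(T_{11})\ge 3$ is a plan with an acknowledged "computer-assisted verification as a fallback," not a proof.

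The missing idea in the paper's argument (Theorem~\ref{lower}) is a normalization that shrinks the derived graph before applying Euler's formula. Order the vertices so that all arcs go from lower to higher index and shift each bar to height equal to its vertex's index. Because $T_{11}$ is transitive, $v_1$ sends an arc to every other vertex and $v_{11}$ receives one from every other vertex, so the two bars of $v_1$ may be merged into a single bottom bar extended to full horizontal width, and likewise the two bars of $v_{11}$ into a single top bar, without creating any unwanted visibility. The derived planar graph then has only $2\cdot 9+2=20$ vertices, hence at most $3\cdot 20-6=54$ edges, while it must contain at least $\binom{11}{2}=55$ edges to realize all arcs of $T_{11}$ --- an immediate contradiction, with no case analysis. (The paper goes further and shows the extended bottom and top bars force at least $k(k-1)$ additional ``wasted'' edges, which yields the general bound $b(T_n)\ge\frac{3-\sqrt{7}}{2}n+O(1)$; but for $n=11$, $k=2$ even the bare count above suffices.) Without some such reduction in the vertex count of the derived graph, your approach cannot conclude.
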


\begin{conjecture}[\cite{ABHW2013}]\label{con2}
If $G$ is an orientation of an undirected graph $\hG$, then $b(G)\leq 2b(\hG)$.
\end{conjecture}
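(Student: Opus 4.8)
The plan is to work from a $b(\hG)$-bar representation of $\hG$ and build a representation of $G$ with at most twice as many bars per vertex. Write $t=b(\hG)$ and fix a $t$-bar representation $R$ of $\hG$; normalize it so that all bars have distinct heights and no two endpoints share an $x$-coordinate. Then the visibility graph $\mathcal B$ of the bars of $R$ is itself a (planar) bar visibility graph, and sending each bar to the vertex of $\hG$ owning it is a quotient of $\mathcal B$ onto $\hG$ with at most $t$ bars per class. So it suffices to produce a bar visibility \emph{digraph} $\vec D$ together with a quotient onto $G$ having at most $2t$ bars per class, whose arc relation (class $X$ has an arc to class $Y$ iff some bar of $X$ sees some bar of $Y$ above it) is exactly the arc relation of $G$. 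The natural candidate is to replace each bar of $\mathcal B$ by two bars, which respects the $2t$ budget.

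The obvious first move handles the "agreeable" arcs, namely those edges $uv$ of $\hG$ oriented $u\to v$ in $G$ that are already witnessed in $R$ by a visibility whose lower bar belongs to $u$. Split each bar $b$ of a vertex $v$ into twins $b^{\uparrow}$ at height $y_b+\varepsilon$ and $b^{\downarrow}$ at height $y_b-\varepsilon$ with small $\varepsilon$, give $b^{\uparrow}$ exactly the upward visibility windows of $b$ chosen to realize agreeable arcs leaving $v$ and $b^{\downarrow}$ exactly the downward windows chosen for agreeable arcs entering $v$. Trimming the twins this way keeps them horizontally disjoint (killing spurious same-vertex visibilities) and also deletes every reversed witness of an agreeable arc, and one checks locally that the surviving twin-visibilities realize precisely the agreeable arcs in their $G$-directions. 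The difficulty is twofold: (i) a "contrary" arc $u\to v$, realized in $R$ only by visibilities in which a $v$-bar lies below a $u$-bar, cannot be repaired locally, since a channel with a lower $v$-bar and an upper $u$-bar can only certify $vu$, and inserting a blocking bar of $u$ or of $v$ into it re-certifies $vu$; (ii) the windows one must keep on a single bar need not be contiguous, which would force that bar to split into more than two pieces.

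Both difficulties push toward a genuinely global re-layout rather than a patch of $R$, and I expect this to be the main obstacle. Concretely I would try to re-embed the doubled object: take $\mathcal B$, double every vertex, and look for a planar embedding together with an acyclic bar-visibility orientation of the doubled graph whose class-quotient is $G$ — the point being that splitting a class into two independently placeable halves gives exactly the freedom to reverse that class's role on some incident edges, which is what lets a directed cycle be "unrolled and reclosed," as already happens for $\vec C_3$ with $b(\vec C_3)=2=2b(C_3)$. The careful parts are showing such an embedding always exists — presumably by induction on the number of contrary arcs, each step spending one of the two copies of some bar to flip one edge and then verifying that planarity and the no-consistent-cycle condition of Theorem~\ref{barvdig} are maintained — and bookkeeping so the total never exceeds $2t$ (for instance, a vertex enclosed from above in $R$ has no contrary arcs into it, so one of its twins is free to be relocated). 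A useful sanity check throughout is the transitive tournament: the construction must be efficient enough not to contradict $b(T_n)\le\frac{7n}{24}+2\sqrt{n\log n}$ from Theorem~\ref{upper}, while $2b(K_n)$ is about $n/3$.
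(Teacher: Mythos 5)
The statement you set out to prove is false; in the paper it appears only as a conjecture of Axenovich et al., and Section~4 of the paper \emph{disproves} it, so no completion of your sketch can succeed. The counterexample is instructive because it pinpoints where your plan breaks. Take $\hG$ to be the planar bipartite graph obtained from an $18$-cycle by adding two vertices adjacent to all nine even-indexed cycle vertices (Figure~\ref{Graph G2}). This graph is $2$-connected and planar, so $b(\hG)=1$ by Theorem~\ref{barvG}. Now orient every edge from one part to the other to get $G$. For such an orientation every vertex is a source or a sink, every channel meets at most two bars, and a $t$-bar visibility representation of $G$ is equivalent to a depth-$2$ $t$-interval representation of $\hG$; hence $b(G)=i(\hG)$ (Lemma~\ref{pro2}). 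But $\hG$ is triangle-free and contains as a spanning subgraph the Scheinerman--West graph $\hG_1$ ($K_{2,9}$ with a pendant edge at each of the nine vertices), which has interval number $3$, so $i(\hG)=3$ and $b(G)=3>2=2b(\hG)$.

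Measured against your sketch, the failure is exactly at the step you flagged as the careful part: the claimed planar embedding of the doubled graph with the required acyclic orientation does \emph{not} always exist, and no induction on the number of contrary arcs can manufacture it. In the example above the obstruction has nothing to do with reversing edges; it is pure congestion. Two bars per vertex give only two intervals per vertex after projection, and that is not enough to realize the adjacency pattern of $K_{2,9}$ plus pendants --- a fact living entirely in undirected interval-number territory and insensitive to any re-embedding. The correct general statements, which the paper proves instead, are $b(G)\le 3b(\hG)$ when $\hG$ is triangle-free (sharp at $b(\hG)=1$ by this same example) and $b(G)\le 4b(\hG)$ in general, both obtained by applying Theorem~\ref{planar-theo} to the derived graph of a $b(\hG)$-bar representation of $\hG$.
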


In Section 2, we present a simple construction proving $b(T_n)\le\CL{n/4}$.
This does not improve the upper bound when $n$ is sufficiently large but is
valid for all $n$, improving statement (1) of Theorem~\ref{upper}.  In Section
3, we prove that $\lim\limits_{n \to \infty} {b(T_n)}/{n}$ exists and is at
least $(3-\sqrt 7)/2$, about $0.177124$.  This improves the easy lower bound of
$1/6$, mentioned in~\cite{ABHW2013}, that follows from Euler's Formula.
As a consequence of our lower bound, we prove Conjecture~\ref{con1}; in
particular, $b(T_{11})=b(T_{12})=3$ and $b(T_{17})=4$.
In Section 4, we disprove Conjecture~\ref{con2} for $b(\hG)=1$ but in general
observe $b(G)\le4b(\hG)$.  Finally, in Section 5 we prove that determining
whether $b(G)\leq 2$ is NP-complete.

A simple observation is helpful in studying $b(T_n)$ for small $n$.

\begin{lemma}\label{n+2}
$b(T_{n})\leq b(T_{n+1})\leq b(T_{n+2})\leq b(T_{n})+1$.
\end{lemma}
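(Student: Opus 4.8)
The plan is to treat the three inequalities separately: the two monotonicity steps $b(T_n)\le b(T_{n+1})$ and $b(T_{n+1})\le b(T_{n+2})$ by deleting a vertex from an optimal representation, and the step $b(T_{n+2})\le b(T_n)+1$ by an explicit construction that adds a new source and a new sink to a representation of $T_n$.

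For monotonicity, I would start from a $t$-bar representation of $T_{n+1}$ with $t=b(T_{n+1})$ and vertices $v_1,\dots,v_{n+1}$ in transitive order, and delete every bar assigned to the sink $v_{n+1}$. Every arc $v_pv_q$ with $p<q\le n$ still has a witness, since deleting bars cannot insert a bar between two bars that were consecutive in a channel. It remains to check that no new arc appears. Suppose that after deletion some bar of $v_p$ sees a bar $B'$ of $v_q$ above it in a channel $C$, but did not before. Then in the original representation some bar lay strictly between them in $C$, and every such bar was a bar of $v_{n+1}$ (the only bars removed); the topmost of these, $B^{\ast}$, is then consecutive with $B'$ in $C$ and lies below it, so it sees $B'$ above it, forcing the arc $v_{n+1}v_q$, which is impossible because $v_{n+1}$ is a sink. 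Hence the bars of $v_1,\dots,v_n$ form a $t$-bar representation of $T_n$, giving $b(T_n)\le b(T_{n+1})$; applying the identical argument to $T_{n+2}$ gives $b(T_{n+1})\le b(T_{n+2})$.

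For $b(T_{n+2})\le b(T_n)+1$, note that $T_{n+2}$ arises from $T_n$ by adding a new source $v_1$ (joined to all vertices) and a new sink $v_{n+2}$ (joined from all vertices). Fix a $t$-bar representation $R$ of $T_n$ on $v_2,\dots,v_{n+1}$ with $t=b(T_n)$, rescaled to lie inside the open box $(0,W)\times(0,H)$. Keep $R$ unchanged; give $v_1$ one long bar $A_1$ at height $-1$ and $v_{n+2}$ one long bar $A_{n+2}$ at height $H+1$, each spanning horizontally from well to the left of $x=0$ to past $x=W$; and give each $v_i$ with $2\le i\le n+1$ one additional short bar at height $-\tfrac12$, placed in its own narrow vertical column inside $x<0$, the $n$ columns being pairwise disjoint. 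Then $A_1$ sees each short bar (yielding every arc $v_1v_i$), each short bar sees $A_{n+2}$ (yielding every arc $v_iv_{n+2}$), $A_1$ sees each lowest bar of $R$ and $A_{n+2}$ sees each highest bar of $R$ (and the resulting arcs $v_1v_j$ and $v_jv_{n+2}$ are all legitimate), and $A_1$ sees $A_{n+2}$ through any clear vertical channel between two consecutive columns (yielding $v_1v_{n+2}$). Since $A_1$ lies below and $A_{n+2}$ above every other bar, nothing sees $A_1$ from below and nothing sees $A_{n+2}$ from above, so no arc into $v_1$ or out of $v_{n+2}$ is created.

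What remains is the routine verification that nothing spurious is introduced, and this is the only place the argument could go wrong if set up carelessly. Because every bar of $R$ lies in $x\in(0,W)$ whereas every short bar lies in $x<0$, no short bar shares a vertical channel with a bar of $R$; no two short bars see each other, as they share the height $-\tfrac12$; and $A_1$ and $A_{n+2}$ lie entirely below, resp.\ above, the box, so none of these additions disturbs the visibilities internal to $R$. The one point needing a moment's care is to choose the widths of the short bars and of the gaps between consecutive columns to be positive but small, so that the source-to-sink channel is genuinely unobstructed while each short bar blocks $A_1$ from $A_{n+2}$ only within its own column; taking the whole configuration generic makes this automatic.
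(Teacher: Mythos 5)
Your proposal is correct and follows essentially the same route as the paper: monotonicity by deleting the bars of one vertex (you delete the sink and invoke sink-ness where the paper invokes transitivity, but the blocking-bar argument is the same), and the third inequality by the same construction of one long bottom bar for the new source, one long top bar for the new sink, and one extra short bar per old vertex placed in a horizontally disjoint region so that each sees the source below and the sink above. The only differences are cosmetic choices of coordinates.
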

\begin{proof}
Because $T_n$ is transitive, removing bars from a visibility representation of
$T_n$ cannot add any unwanted visibility.  Thus we can obtain an $m$-bar
visibility representation of $T_{n}$ from one for $T_{n+1}$ by removing the
bars for one vertex, and similarly $b(T_{n+1})\le b(T_{n+2})$.

To complete the proof, we obtain a $(k+1)$-bar visibility representation of
$T_{n+2}$ from a $k$-bar visibility representation of $T_n$.  Draw the
representation of the smaller tournament with vertices $\VEC v1n$ in the left
half-plane.  In the right half-plane, we will add one bar for each of
$\VEC v0{n+1}$, representing all arcs involving the two new vertices
$v_0$ and $v_{n+1}$ at a cost of adding one new bar for each old vertex.

Index the vertices so that $v_0$ is a source and $v_{n+1}$ is a sink in
$T_{n+2}$, making $T_{n+2}$ indeed transitive.  For $1\le i\le n$,
assign to $v_i$ the bar from the point $(i-1,1)$ to the point $(i,1)$.
Assign to $v_0$ the bar from $(0,0)$ to $(n+1,0)$, and assign
$v_{n+1}$ the bar from $(0,2)$ to $(n+1,2)$.  This generates arcs from
$v_0$ to all of $\VEC v1{n+1}$ and from all of $\VEC v0n$ to $v_{n+1}$.
\end{proof}

\section{An upper bound on $b(T_n)$}

In this section, we prove an upper bound on $b(T_n)$ for general $n$ by
using decompositions of the complete graph.  A well-known result about complete
graphs of even order is that they decompose into spanning paths.

\begin{lemma}[\cite{Alspach08}]\label{h}
The complete graph $K_{2m}$ with vertex set $\{x_1, \ldots, x_{2m}\}$
decomposes into spanning paths $P_1,\ldots, P_{m}$ given by
\begin{equation}\label{p}
P_{i}=\la x_ix_{i+1}x_{i-1}x_{i+2}x_{i-2}\cdots x_{i+(m-1)}x_{i-(m-1)}x_{i+m}\ra
\end{equation}
for $1\leq i\leq m$, with subscripts on $x$ taken modulo $2m$.
\end{lemma}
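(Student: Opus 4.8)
\medskip
\noindent\emph{Proof proposal.}\quad
The plan is to work in $\mathbb{Z}_{2m}$, identifying the vertex $x_k$ with the residue $k$, and to describe each path $P_i$ via the offsets of its vertices from $i$. Reading off the subscripts in~\eqref{p}, the vertices of $P_i$ in order are $x_{i+c_0},\ldots,x_{i+c_{2m-1}}$, where $c_0=0$, where $c_{2t-1}=t$ and $c_{2t}=-t$ for $1\le t\le m-1$, and where $c_{2m-1}=m$. First I would check that each $P_i$ really is a spanning path: this amounts to showing that $c_0,\ldots,c_{2m-1}$ are pairwise distinct modulo $2m$, and indeed they are exactly $0,\pm1,\ldots,\pm(m-1),m$, which is a complete residue system mod $2m$ (using $m\equiv-m$). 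Hence each $P_i$ visits every vertex exactly once and has $2m-1$ edges.

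Next I would record the edge ``jumps'': the $j$-th edge of $P_i$ joins $x_{i+c_{j-1}}$ to $x_{i+c_j}$, and a one-line computation gives $c_j-c_{j-1}=(-1)^{j-1}j$ for $1\le j\le 2m-1$. In particular $j\mapsto(-1)^{j-1}j$ is a bijection from $\{1,\ldots,2m-1\}$ onto $\mathbb{Z}_{2m}\setminus\{0\}$, and $c_{j-1}$ is the piecewise-linear function of $j$ equal to $j/2$ when $j$ is even and to $-(j-1)/2$ when $j$ is odd. Since there are $m$ paths, each with $2m-1$ edges, and $m(2m-1)=\binom{2m}{2}$, it suffices to prove that every edge of $K_{2m}$ lies in at least one $P_i$ with $i\in\{1,\ldots,m\}$; edge-disjointness of $P_1,\ldots,P_m$, and hence the decomposition, then follows by counting.

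To prove this, fix an edge $\{x_a,x_b\}$ and set $d=b-a\in\mathbb{Z}_{2m}$. By the bijectivity just noted there are unique indices $j_1,j_2\in\{1,\ldots,2m-1\}$ with $(-1)^{j_1-1}j_1=d$ and $(-1)^{j_2-1}j_2=-d$, and one checks $j_1+j_2=2m$ (with $j_1=j_2=m$ in the exceptional case $d=m$). For $\{x_a,x_b\}$ to be the $j_1$-th edge of $P_i$ traversed in the direction $a\to b$ forces $i\equiv a-c_{j_1-1}$, while for it to be the $j_2$-th edge traversed in the direction $b\to a$ forces $i\equiv b-c_{j_2-1}$; these are the only two values of $i\in\mathbb{Z}_{2m}$ for which $\{x_a,x_b\}$ is an edge of $P_i$. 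Plugging in the piecewise formula for the $c$'s and splitting into the two cases ``$j_1$ odd'' and ``$j_1$ even'', I would verify that these two candidate values of $i$ differ by exactly $m$ modulo $2m$. Since exactly one member of any pair $\{i_0,i_0+m\}$ lies in $\{1,\ldots,m\}$, the edge $\{x_a,x_b\}$ appears in exactly one of $P_1,\ldots,P_m$, as required.

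The only real obstacle is the bookkeeping in the last step: an edge's two orientations sit at positions $j_1$ and $j_2$ of opposite parity along the path, so the identity ``the two candidate $i$'s differ by $m$'' has to be checked separately in each parity case; in both cases, however, it collapses after a short calculation in $\mathbb{Z}_{2m}$, so nothing deeper than arithmetic is involved. As a sanity check, for $K_4$ (so $m=2$) the scheme gives $P_1=\la x_1x_2x_4x_3\ra$ and $P_2=\la x_2x_3x_1x_4\ra$, which together use each of the six edges exactly once.
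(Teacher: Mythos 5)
Your argument is correct. Note, however, that the paper does not prove this lemma at all: it is quoted verbatim from Alspach's account of the classical Walecki construction (\cite{Alspach08}), so there is no in-paper proof to compare against. Your write-up is a legitimate self-contained verification: the offset sequence $c_0,\dots,c_{2m-1}=0,1,-1,\dots,m-1,-(m-1),m$ is indeed a complete residue system mod $2m$, the consecutive differences $c_j-c_{j-1}=(-1)^{j-1}j$ do biject $\{1,\dots,2m-1\}$ onto the nonzero residues, and the two candidate starting indices $a-c_{j_1-1}$ and $b-c_{j_2-1}$ for an edge of difference $d$ do differ by exactly $m$ in each parity case (e.g.\ for $d$ odd, $c_{j_2-1}-c_{j_1-1}=j_1-m$ and the difference is $m$), so exactly one lands in $\{1,\dots,m\}$; combined with the count $m(2m-1)=\binom{2m}{2}$ this gives the decomposition. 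The $m=2$ sanity check is also right. The only things I would tighten in a final version are to actually write out the two short parity computations you defer with ``I would verify,'' since they are the entire content of the disjointness claim.
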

For $1\leq i\leq m$, the central edge of $P_i$ as specified above is
$x_{i+\CL{m/2}}x_{i-\FL{m/2}}$, which we designate as $e_i$.
Note that $\VEC e1m$ is a perfect matching in $K_{2m}$.
The example with $m=4$ decomposes $K_8$ into the spanning paths
$P_1,\ldots, P_4$, where $P_1=\la x_1x_2x_8x_3x_7x_4x_6x_5\ra$,
$P_2=\la x_2x_3x_1x_4x_8x_5x_7x_6\ra$, $P_3=\la x_3x_4x_2x_5x_1x_6x_8x_7\ra$,
and $P_4=\la x_4x_5x_3x_6x_2x_7x_1x_8\ra$.  The matching consisting of the
central edges is $\{x_3x_7, x_4x_8, x_5x_1, x_6x_2\}$.
Note also that every orientation of a path is a bar visibility digraph.

\begin{theorem}\label{n/4}
The bar visibility number of the transitive tournament $T_n$ is at most
$\CL{n/4}$.
\end{theorem}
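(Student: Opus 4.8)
The plan is to reduce $b(T_n)$ to an edge‑decomposition problem and to solve that problem using the path decomposition of Lemma~\ref{h}. For the reduction, observe the following \emph{juxtaposition principle}: if the arc set of $T_n$ can be partitioned into $t$ subdigraphs, each of which is a bar visibility digraph, then $b(T_n)\le t$. Indeed, take a $1$-bar visibility representation of each piece, rescale it to lie inside its own narrow vertical strip, and place the $t$ strips side by side with pairwise disjoint $x$-projections. Rescaling and translating preserve all visibilities; a vertex receives at most one bar in each piece containing an edge at it, hence at most $t$ bars in all; every arc of $T_n$ lies in exactly one piece and is realized in the corresponding strip; and no bar in one strip can see a bar in another, since their $x$-projections are disjoint. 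So it suffices to partition the edges of $K_n$, transitively oriented by $T_n$, into $\CL{n/4}$ bar visibility digraphs.

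By the first inequality of Lemma~\ref{n+2} together with the fact that $\CL{(n+1)/4}=\CL{n/4}$ for odd $n$, it is enough to handle even $n$; write $n=2m$, so the target becomes $\CL{m/2}$ pieces. Apply Lemma~\ref{h} to obtain the spanning paths $P_1,\dots,P_m$ with central edges $e_1,\dots,e_m$, which form a perfect matching, and orient everything by the transitive order of $T_{2m}$. A crucial simplification is that any subgraph $H$ of a transitive tournament is free of consistent cycles: on an oriented cycle of $H$, the vertex of least index has both incident cycle-edges directed away from it, so it is a source of that cycle. Moreover the augmentation $H\mapsto H'$ of Theorem~\ref{barvdig} only adds a source $s$, a sink $t$, and the arc $st$; a consistent cycle would have to avoid both $s$ and $t$ (each of which would be a source or sink on any oriented cycle containing it), so it would already be a consistent cycle in $H$ — impossible. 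Hence, by Theorem~\ref{barvdig}, such an $H$ is a bar visibility digraph if and only if $H'$ is planar. The whole task thus reduces to partitioning $E(K_{2m})$ into $\CL{m/2}$ blocks $H$ for which every $H'$ is planar.

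The first thing one might try — letting each block be the union of two consecutive paths $P_{2i-1}\cup P_{2i}$ — fails. Already for $m=4$, the sources and sinks of $P_1\cup P_2$ are $\{x_1,x_5\}$ and $\{x_7,x_8\}$, and $(P_1\cup P_2)'$ contains a subdivision of $K_{3,3}$ with branch classes $\{x_3,x_4,x_5\}$ and $\{x_1,x_7,x_8\}$: all eight adjacencies other than $x_5x_1$ are present as edges of $P_1\cup P_2$, and $x_5x_1$ is realized by the path $x_5\,s\,x_1$. The central-edge matching is what lets one repair this. The plan is to break each $P_i$ at its central edge $e_i$ into its two half-paths, group the $2m$ half-paths into $\CL{m/2}$ blocks (at most four half-paths each), and then reassign the $m$ matching edges $e_1,\dots,e_m$ among the blocks — not back to the block they came from — so that every resulting block $H$ has $H'$ planar. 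Planarity would then be verified by an explicit embedding using the zigzag structure of the half-paths: arrange the vertices around a circle so that the half-paths forming a block are non-crossing, and then insert the few reassigned matching edges together with $s$, $t$, and the arc $st$ into the remaining faces.

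The main obstacle is exactly this last step — choosing the grouping of half-paths and the assignment of the central edges so that each block's augmented graph is planar, and then describing the resulting embedding uniformly in $m$, the delicate point being the crossing-free placement of $s$, $t$, and $st$ (which is where the naive attempt broke down). Verifying planarity, rather than exhibiting a $K_{3,3}$ as in the failed attempt, is what requires care. Once a workable grouping is in hand the rest is immediate: $\CL{m/2}$ blocks give $b(T_{2m})\le\CL{m/2}=\CL{n/4}$, and the odd case was already reduced to this by Lemma~\ref{n+2}.
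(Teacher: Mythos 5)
Your setup is sound: the juxtaposition principle, the reduction to even $n$, the observation that subdigraphs of a transitive tournament (and their augmentations $H\mapsto H'$) have no consistent cycles, and hence the reduction of the whole problem to partitioning the arcs into $\CL{n/4}$ pieces with planar augmentations --- all of this is correct and matches the frame of the actual proof. You also correctly diagnose that naively pairing up the Walecki paths fails. But the proof stops exactly where the construction has to begin: you propose to split each $P_i$ at its central edge, regroup the $2m$ half-paths into $\CL{m/2}$ blocks, and reassign the matching edges so that every block's augmentation is planar, and then you explicitly concede that choosing the grouping, the reassignment, and the embedding is "the main obstacle." That obstacle is the theorem. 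Nothing in the proposal indicates how to overcome it, and it is not evident that a uniform-in-$m$ grouping of four half-paths per block with reinserted matching edges admits a planar augmentation at all.

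The paper's construction avoids this difficulty by working at a different level. It reduces to $n=4m$ and partitions the \emph{vertices} into $A=\{\VEC v1m\}\cup\{\VEC v{3m+1}{4m}\}$ and $B=\{\VEC v{m+1}{3m}\}$, each inducing a $T_{2m}$. Lemma~\ref{h} decomposes $T_n[A]$ into paths $\VEC P1m$ and $T_n[B]$ into paths $\VEC Q1m$; the central edges $\VEC e1m$ of the $P_i$ form a perfect matching on $A$ in which each $e_i$ has one endpoint among $\VEC v1m$ (which dominates all of $B$) and one among $\VEC v{3m+1}{4m}$ (dominated by all of $B$). The $i$-th piece is $G_i=P_i\cup Q_i$ together with all arcs between the two endpoints of $e_i$ and $V(Q_i)$; summing over $i$ this covers the bipartite arcs between $A$ and $B$ exactly once because the $e_i$ form a perfect matching on $A$. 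Planarity of $G_i'$ is then immediate from an explicit drawing (place $Q_i$ on a segment between the endpoints of $e_i$, with the rest of $P_i$ hanging off), and every vertex of $B$ is an inner vertex of a consistent path from one endpoint of $e_i$ to the other, so all sources and sinks lie on $P_i$ and $s,t,st$ can be added in the outer face. This two-level idea --- using the central matching of one half's path decomposition to absorb the complete bipartite graph to the other half, one $Q_i$ per matching edge --- is the essential content missing from your writeup; without it (or a completed version of your half-path regrouping) the argument does not yield the bound.
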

\begin{proof}
By Lemma~\ref{n+2}, it suffices to prove $b(T_n)\le m$ when $n=4m$.  We aim to
decompose $T_n$ into $m$ bar visibility digraphs, each represented using one
bar per vertex; this yields $b(T_n)\le m$.  Index the vertices of $T_n$ as
$\VEC v1n$ so that the arcs are $\{v_iv_j\st i<j\}$.  Partition the vertex set
into two sets $A$ and $B$, where $A=\{\VEC v1m\}\cup\{\VEC v{3m+1}{4m}\}$ and
$B=\{\VEC v{m+1}{3m}\}$.

The subtournaments $T_n[A]$ and $T_n[B]$ induced by $A$ and $B$ are isomorphic
to $T_{2m}$.  By Lemma~\ref{h}, they decompose into orientations of $m$ paths,
which we call $\VEC P1m$ in $T_n[A]$ and $\VEC Q1m$ in $T_n[B]$.  These paths
inherit orientations from $T_{4m}$.  In order to express them in
the form~\eqref{p}, in $\VEC P1m$ we view $\VEC v1m$ as $\VEC x1m$ and
$\VEC v{3m+1}{4m}$ as $\VEC x{m+1}{2m}$.  In $\VEC Q1m$, we view
$\VEC v{m+1}{3m}$ as $\VEC x1{2m}$ in order.

The remaining arcs form an orientation of the complete bipartite graph
$K_{2m,2m}$ with parts $A$ and $B$.  The arcs are oriented from $\VEC v1m$ in
$A$ to all of $B$ and from all of $B$ to $\VEC v{3m+1}{4m}$ in $A$.

Recall that the central arcs $\VEC e1m$ of the paths $\VEC P1m$ form a perfect
matching on $A$.  Let $G_i$ be the digraph obtained by joining both endpoints
of $e_i$ to all the vertices of $Q_i$, inheriting the orientation from $T_n$.
As illustrated in Figure~\ref{f1}, $G_i$ is a planar digraph: we place the
vertices of $Q_i$ on a horizontal axis between the vertices of $e_i$, with the
rest of $P_i$ extending from the central arc $e_i$.  Figure~\ref{f1} shows the
decomposition $\{G_1,\ldots, G_4\}$ for $T_{16}$.

To show that $G_i$ is a bar visibility digraph, we apply Theorem~\ref{barvdig}.
Note first that each arc $e_i$ has one endpoint in $\VEC v1m$ and one endpoint
in $\VEC v{3m+1}{4m}$.  This means that every vertex in $B$ is neither a source
nor a sink in $G_i$.  In the figure, we add $s$ to the left and $t$ to the
right.  Since sources and sinks in $G_i$ lie along $P_i$, we can add arcs from
$s$ to the sources and from the sinks to $t$, plus the arc $st$, while
maintaining planarity.  Hence by Theorem~\ref{barvdig} $G_i$ is a bar
visibility graph, as desired.
\end{proof}

\begin{figure}[h]
\begin{center}
\begin{tikzpicture}
[p/.style={circle,draw=black,inner sep=1.4pt},>=triangle 45,xscale=0.85,yscale=1.1]

\node(1) at(0.5,-2)[p]{};
\node(2) at(1.5,-2)[p]{};
\node(16) at(2.5,-2)[p]{};
\node(3) at(3.5,-2)[p]{};
\node(15) at(3.5,2)[p]{};
\node(4) at(2.5,2)[p]{};
\node(14) at(1.5,2)[p]{};
\node(13) at(0.5,2)[p]{};

\node(5) at(7,0)[p]{};
\node(6) at(6,0)[p]{};
\node(12) at(5,0)[p]{};
\node(7) at(4,0)[p]{};
\node(11) at(3,0)[p]{};
\node(8) at(2,0)[p]{};
\node(10) at(1,0)[p]{};
\node(9) at(0,0)[p]{};

\draw (0.5,-2.3) node {\small 1}
      (1.5,-2.3) node {\small 2}
      (2.5,-2.3) node {\small 16}
      (3.5,-2.3) node {\small 3}
      (0.5,2.3) node {\small 13}
      (1.5,2.3)   node {\small 14}
      (2.5,2.3) node {\small 4}
      (3.5,2.3) node {\small 15}
      (-0.15,0.25) node {\small 9}
      (0.88,0.25) node {\small 10}
      (1.8,0.25)   node {\small 8}
      (2.8,0.25) node {\small 11}
      (4.2,0.25) node {\small 7}
      (5.2,0.25) node {\small 12}
      (6.1,0.25) node {\small 6}
      (7.1,0.25) node {\small 5};

\draw(1)--(2);\draw(2)--(16);\draw(16)--(3); \draw(15)--(4);\draw(4)--(14);\draw(14)--(13);
\draw(5)--(6); \draw(6)--(12);\draw(12)--(7); \draw(7)--(11);\draw(11)--(8);\draw(8)--(10);\draw(10)--(9);
\node(12x) at ($(1)!.8!(2)$) {};\node(216x) at ($(2)!.8!(16)$) {};\node(316x) at ($(3)!.8!(16)$) {};\node(415x) at ($(4)!.8!(15)$) {};
\node(414x) at ($(4)!.8!(14)$) {};\node(1314x) at ($(13)!.8!(14)$) {};\node(56x) at ($(5)!.8!(6)$) {};\node(612x) at ($(6)!.8!(12)$) {};
\node(712x) at ($(7)!.8!(12)$) {};\node(711x) at ($(7)!.8!(11)$) {};\node(811x) at ($(8)!.8!(11)$) {};\node(810x) at ($(8)!.8!(10)$) {};
\node(910x) at ($(9)!.8!(10)$) {};
\draw[->](1)--(12x); \draw[->](2)--(216x);\draw[->](3)--(316x); \draw[->](4)--(415x);\draw[->](4)--(414x);\draw[->](13)--(1314x);
\draw[->](5)--(56x); \draw[->](6)--(612x);\draw[->](7)--(712x); \draw[->](7)--(711x);\draw[->](8)--(811x);\draw[->](8)--(810x);\draw[->](9)--(910x);

\draw(3)--(5);\node(35x) at ($(3)!.65!(5)$) {};\draw[->](3)--(35x);
\draw(3)--(6);\node(36x) at ($(3)!.69!(6)$) {};\draw[->](3)--(36x);
\draw(3)--(12);\node(312x) at ($(3)!.7!(12)$) {};\draw[->](3)--(312x);
\draw(3)--(7);\node(37x) at ($(3)!.7!(7)$) {};\draw[->](3)--(37x);
\draw(3)--(11);\node(311x) at ($(3)!.7!(11)$) {};\draw[->](3)--(311x);
\draw(3)--(8);\node(38x) at ($(3)!.7!(8)$) {};\draw[->](3)--(38x);
\draw(3)--(10);\node(310x) at ($(3)!.69!(10)$) {};\draw[->](3)--(310x);
\draw(3)--(9);\node(39x) at ($(3)!.65!(9)$) {};\draw[->](3)--(39x);

\draw(15)--(5); \draw(15)--(6);\draw(15)--(12); \draw(15)--(7);\draw(15)--(11);\draw(15)--(8);\draw(15)--(10);\draw(15)--(9);
\node(515x) at ($(5)!.66!(15)$) {};\node(615x) at ($(6)!.69!(15)$) {};\node(1215x) at ($(12)!.7!(15)$) {};\node(715x) at ($(7)!.7!(15)$) {};
\node(1115x) at ($(11)!.7!(15)$) {};\node(815x) at ($(8)!.7!(15)$) {};\node(1015x) at ($(10)!.69!(15)$) {};\node(915x) at ($(9)!.66!(15)$) {};
\draw[<-](515x)--(5); \draw[<-](615x)--(6);\draw[<-](1215x)--(12); \draw[<-](715x)--(7);\draw[<-](1115x)--(11);\draw[<-](815x)--(8);\draw[<-](1015x)--(10);\draw[<-](915x)--(9);

\draw[->](7.63,0)--(7.63,0.1);
\draw (3) .. controls +(-5:5.5cm) and +(5:5.5cm) .. (15);

\begin{scope}[xshift=10cm]
\node(2) at(0.5,-2)[p]{};
\node(3) at(1.5,-2)[p]{};
\node(1) at(2.5,-2)[p]{};
\node(4) at(3.5,-2)[p]{};
\node(16) at(3.5,2)[p]{};
\node(13) at(2.5,2)[p]{};
\node(15) at(1.5,2)[p]{};
\node(14) at(0.5,2)[p]{};

\node(6) at(7,0)[p]{};
\node(7) at(6,0)[p]{};
\node(5) at(5,0)[p]{};
\node(8) at(4,0)[p]{};
\node(12) at(3,0)[p]{};
\node(9) at(2,0)[p]{};
\node(11) at(1,0)[p]{};
\node(10) at(0,0)[p]{};

\draw (0.5,-2.3) node {\small 2}
      (1.5,-2.3) node {\small 3}
      (2.5,-2.3) node {\small 1}
      (3.5,-2.3) node {\small 4}
      (0.5,2.3) node {\small 14}
      (1.5,2.3)   node {\small 15}
      (2.5,2.3) node {\small 13}
      (3.5,2.3) node {\small 16}
      (-0.15,0.25) node {\small 10}
      (0.88,0.25) node {\small 11}
      (1.8,0.25)   node {\small 9}
      (2.75,0.25) node {\small 12}
      (4.2,0.25) node {\small 8}
      (5.1,0.25) node {\small 5}
      (6.1,0.25) node {\small 7}
      (7.1,0.25) node {\small 6};

\draw(2)--(3);\draw(3)--(1);\draw(1)--(4); \draw(16)--(13);\draw(13)--(15);\draw(15)--(14);
\draw(6)--(7); \draw(7)--(5);\draw(5)--(8); \draw(8)--(12);\draw(12)--(9);\draw(9)--(11);\draw(11)--(10);
\node(23x) at ($(2)!.8!(3)$) {};\node(13x) at ($(1)!.8!(3)$) {};\node(14x) at ($(1)!.8!(4)$) {};\node(1613x) at ($(13)!.8!(16)$) {};
\node(1315x) at ($(13)!.8!(15)$) {};\node(1415x) at ($(14)!.8!(15)$) {};\node(67x) at ($(6)!.8!(7)$) {};\node(57x) at ($(5)!.8!(7)$) {};
\node(58x) at ($(5)!.8!(8)$) {};\node(812x) at ($(8)!.8!(12)$) {};\node(912x) at ($(9)!.8!(12)$) {};\node(911x) at ($(9)!.8!(11)$) {};
\node(1011x) at ($(10)!.8!(11)$) {};
\draw[->](2)--(23x); \draw[<-](13x)--(1);\draw[->](1)--(14x); \draw[<-](1613x)--(13);\draw[->](13)--(1315x);\draw[<-](1415x)--(14);
\draw[->](6)--(67x); \draw[<-](57x)--(5);\draw[->](5)--(58x); \draw[->](8)--(812x);\draw[<-](912x)--(9);\draw[->](9)--(911x);\draw[<-](1011x)--(10);

\draw(4)--(5);\node(45x) at ($(4)!.7!(5)$) {};\draw[->](4)--(45x);
\draw(4)--(6);\node(46x) at ($(4)!.65!(6)$) {};\draw[->](4)--(46x);
\draw(4)--(12);\node(412x) at ($(4)!.7!(12)$) {};\draw[->](4)--(412x);
\draw(4)--(7);\node(47x) at ($(4)!.69!(7)$) {};\draw[->](4)--(47x);
\draw(4)--(11);\node(411x) at ($(4)!.69!(11)$) {};\draw[->](4)--(411x);
\draw(4)--(8);\node(48x) at ($(4)!.7!(8)$) {};\draw[->](4)--(48x);
\draw(4)--(10);\node(410x) at ($(4)!.65!(10)$) {};\draw[->](4)--(410x);
\draw(4)--(9);\node(49x) at ($(4)!.7!(9)$) {};\draw[->](4)--(49x);

\draw(16)--(5); \draw(16)--(6);\draw(16)--(12); \draw(16)--(7);\draw(16)--(11);\draw(16)--(8);\draw(16)--(10);\draw(16)--(9);
\node(165x) at ($(5)!.7!(16)$) {};\node(166x) at ($(6)!.66!(16)$) {};\node(1612x) at ($(12)!.7!(16)$) {};\node(167x) at ($(7)!.69!(16)$) {};
\node(1611x) at ($(11)!.69!(16)$) {};\node(168x) at ($(8)!.7!(16)$) {};\node(1610x) at ($(10)!.66!(16)$) {};\node(169x) at ($(9)!.7!(16)$) {};
\draw[<-](165x)--(5); \draw[<-](166x)--(6);\draw[<-](1612x)--(12); \draw[<-](167x)--(7);\draw[<-](1611x)--(11);\draw[<-](168x)--(8);\draw[<-](1610x)--(10);\draw[<-](169x)--(9);

\draw[->](7.63,0)--(7.63,0.1);
\draw (4) .. controls +(-5:5.5cm) and +(5:5.5cm) .. (16);

\end{scope}
\end{tikzpicture}
\\ (a) The subgraph $G_1$ \quad\quad\quad\quad\quad\quad\quad\quad\quad\quad
(b)  The subgraph $G_2$
\end{center}


\begin{center}
\begin{tikzpicture}
[p/.style={circle,draw=black,inner sep=1.4pt},>=triangle 45,xscale=0.85,yscale=1.1]
\node(3) at(0.5,-2)[p]{};
\node(4) at(1.5,-2)[p]{};
\node(2) at(2.5,-2)[p]{};
\node(13) at(3.5,-2)[p]{};
\node(1) at(3.5,2)[p]{};
\node(14) at(2.5,2)[p]{};
\node(16) at(1.5,2)[p]{};
\node(15) at(0.5,2)[p]{};

\node(7) at(7,0)[p]{};
\node(8) at(6,0)[p]{};
\node(6) at(5,0)[p]{};
\node(9) at(4,0)[p]{};
\node(5) at(3,0)[p]{};
\node(10) at(2,0)[p]{};
\node(12) at(1,0)[p]{};
\node(11) at(0,0)[p]{};

\draw (0.5,-2.3) node {\small 3}
      (1.5,-2.3) node {\small 4}
      (2.5,-2.3) node {\small 2}
      (3.5,-2.3) node {\small 13}
      (0.5,2.3) node {\small 15}
      (1.5,2.3)   node {\small 16}
      (2.5,2.3) node {\small 14}
      (3.5,2.3) node {\small 1}
      (-0.15,0.25) node {\small 11}
      (0.88,0.25) node {\small 12}
      (1.8,0.25)   node {\small 10}
      (2.8,0.25) node {\small 5}
      (4.2,0.25) node {\small 9}
      (5.1,0.25) node {\small 6}
      (6.1,0.25) node {\small 8}
      (7.1,0.25) node {\small 7};

\draw(3)--(4);\draw(4)--(2);\draw(2)--(13); \draw(1)--(14);\draw(14)--(16);\draw(16)--(15);
\draw(7)--(8); \draw(8)--(6);\draw(6)--(9); \draw(9)--(5);\draw(5)--(10);\draw(10)--(12);\draw(12)--(11);
\node(34x) at ($(3)!.8!(4)$) {};\node(24x) at ($(2)!.8!(4)$) {};\node(213x) at ($(2)!.8!(13)$) {};\node(114x) at ($(1)!.8!(14)$) {};
\node(1416x) at ($(14)!.8!(16)$) {};\node(1516x) at ($(15)!.8!(16)$) {};\node(78x) at ($(7)!.8!(8)$) {};\node(68x) at ($(6)!.8!(8)$) {};
\node(69x) at ($(6)!.8!(9)$) {};\node(59x) at ($(5)!.8!(9)$) {};\node(510x) at ($(5)!.8!(10)$) {};\node(1012x) at ($(10)!.8!(12)$) {};
\node(1112x) at ($(11)!.8!(12)$) {};
\draw[->](3)--(34x); \draw[->](2)--(24x);\draw[->](2)--(213x); \draw[->](1)--(114x);\draw[->](14)--(1416x);\draw[->](15)--(1516x);
\draw[->](7)--(78x); \draw[->](6)--(68x);\draw[->](11)--(1112x); \draw[->](6)--(69x);\draw[->](5)--(59x);\draw[->](5)--(510x);\draw[->](10)--(1012x);

\draw(13)--(7);\node(713x) at ($(7)!.59!(13)$) {};\draw[->](7)--(713x);
\draw(13)--(8);\node(813x) at ($(8)!.6!(13)$) {};\draw[->](8)--(813x);
\draw(13)--(6);\node(613x) at ($(6)!.6!(13)$) {};\draw[->](6)--(613x);
\draw(13)--(9);\node(913x) at ($(9)!.6!(13)$) {};\draw[->](9)--(913x);
\draw(13)--(5);\node(513x) at ($(5)!.6!(13)$) {};\draw[->](5)--(513x);
\draw(13)--(10);\node(1013x) at ($(10)!.6!(13)$) {};\draw[->](10)--(1013x);
\draw(13)--(12);\node(1213x) at ($(12)!.6!(13)$) {};\draw[->](12)--(1213x);
\draw(13)--(11);\node(1113x) at ($(11)!.59!(13)$) {};\draw[->](11)--(1113x);

\draw(1)--(7); \draw(1)--(8);\draw(1)--(6); \draw(1)--(9);\draw(1)--(5);\draw(1)--(10);\draw(1)--(12);\draw(1)--(11);
\node(17x) at ($(1)!.56!(7)$) {};\node(18x) at ($(1)!.59!(8)$) {};\node(16x) at ($(1)!.6!(6)$) {};\node(19x) at ($(1)!.6!(9)$) {};
\node(15x) at ($(1)!.6!(5)$) {};\node(110x) at ($(1)!.6!(10)$) {};\node(112x) at ($(1)!.59!(12)$) {};\node(111x) at ($(1)!.56!(11)$) {};
\draw[<-](17x)--(1); \draw[<-](18x)--(1);\draw[<-](16x)--(1); \draw[<-](19x)--(1);\draw[<-](15x)--(1);\draw[<-](110x)--(1);\draw[<-](112x)--(1);\draw[<-](111x)--(1);

\draw[<-](7.63,0)--(7.63,0.1);
\draw (13) .. controls +(-5:5.5cm) and +(5:5.5cm) .. (1);

\begin{scope}[xshift=10cm]
\node(4) at(0.5,-2)[p]{};
\node(13) at(1.5,-2)[p]{};
\node(3) at(2.5,-2)[p]{};
\node(14) at(3.5,-2)[p]{};
\node(2) at(3.5,2)[p]{};
\node(15) at(2.5,2)[p]{};
\node(1) at(1.5,2)[p]{};
\node(16) at(0.5,2)[p]{};

\node(8) at(7,0)[p]{};
\node(9) at(6,0)[p]{};
\node(7) at(5,0)[p]{};
\node(10) at(4,0)[p]{};
\node(6) at(3,0)[p]{};
\node(11) at(2,0)[p]{};
\node(5) at(1,0)[p]{};
\node(12) at(0,0)[p]{};

\draw (0.5,-2.3) node {\small 4}
      (1.5,-2.3) node {\small 13}
      (2.5,-2.3) node {\small 13}
      (3.5,-2.3) node {\small 14}
      (0.5,2.3) node {\small 16}
      (1.5,2.3)   node {\small 1}
      (2.5,2.3) node {\small 15}
      (3.5,2.3) node {\small 2}
      (-0.15,0.25) node {\small 12}
      (0.88,0.25) node {\small 5}
      (1.8,0.25)   node {\small 11}
      (2.8,0.25) node {\small 6}
      (4.2,0.25) node {\small 10}
      (5.1,0.25) node {\small 7}
      (6.1,0.25) node {\small 9}
      (7.1,0.25) node {\small 8};

\draw(4)--(13); \draw(13)--(3);\draw(3)--(14); \draw(2)--(15);\draw(15)--(1);\draw(1)--(16);
\draw(8)--(9); \draw(9)--(7);\draw(7)--(10); \draw(10)--(6);\draw(6)--(11);\draw(11)--(5);\draw(5)--(12);
\node(413x) at ($(4)!.8!(13)$) {};\node(313x) at ($(3)!.8!(13)$) {};\node(314x) at ($(3)!.8!(14)$) {};\node(215x) at ($(2)!.8!(15)$) {};
\node(115x) at ($(1)!.8!(15)$) {};\node(116x) at ($(1)!.8!(16)$) {};\node(89x) at ($(8)!.8!(9)$) {};\node(79x) at ($(7)!.8!(9)$) {};
\node(710x) at ($(7)!.8!(10)$) {};\node(610x) at ($(6)!.8!(10)$) {};\node(611x) at ($(6)!.8!(11)$) {};\node(511x) at ($(5)!.8!(11)$) {};
\node(512x) at ($(5)!.8!(12)$) {};
\draw[->](4)--(413x); \draw[->](3)--(313x);\draw[->](3)--(314x); \draw[->](2)--(215x);\draw[->](1)--(115x);\draw[->](1)--(116x);
\draw[->](8)--(89x); \draw[->](7)--(79x);\draw[->](7)--(710x); \draw[->](6)--(610x);\draw[->](6)--(611x);\draw[->](5)--(511x);\draw[->](5)--(512x);

\draw(2)--(5); \draw(2)--(6);\draw(2)--(12); \draw(2)--(7);\draw(2)--(11);\draw(2)--(8);\draw(2)--(10);\draw(2)--(9);\draw(14)--(5); \draw(14)--(6);\draw(14)--(12); \draw(14)--(7);\draw(14)--(11);\draw(14)--(8);\draw(14)--(10);\draw(14)--(9);

\node(25x) at ($(2)!.59!(5)$) {};\draw[->](2)--(25x);
\node(26x) at ($(2)!.6!(6)$) {};\draw[->](2)--(26x);
\node(212x) at ($(2)!.56!(12)$) {};\draw[->](2)--(212x);
\node(27x) at ($(2)!.6!(7)$) {};\draw[->](2)--(27x);
\node(211x) at ($(2)!.6!(11)$) {};\draw[->](2)--(211x);
\node(28x) at ($(2)!.56!(8)$) {};\draw[->](2)--(28x);
\node(210x) at ($(2)!.6!(10)$) {};\draw[->](2)--(210x);
\node(29x) at ($(2)!.59!(9)$) {};\draw[->](2)--(29x);

\node(514x) at ($(5)!.6!(14)$) {};\node(614x) at ($(6)!.6!(14)$) {};\node(1214x) at ($(12)!.59!(14)$) {};\node(714x) at ($(7)!.6!(14)$) {};
\node(1114x) at ($(11)!.6!(14)$) {};\node(814x) at ($(8)!.59!(14)$) {};\node(1014x) at ($(10)!.6!(14)$) {};\node(914x) at ($(9)!.6!(14)$) {};
\draw[<-](514x)--(5); \draw[<-](614x)--(6);\draw[<-](1214x)--(12); \draw[<-](714x)--(7);\draw[<-](1114x)--(11);\draw[<-](814x)--(8);\draw[<-](1014x)--(10);\draw[<-](914x)--(9);

\draw[<-](7.63,0)--(7.63,0.1);
\draw (14) .. controls +(-5:5.5cm) and +(5:5.5cm) .. (2);
\end{scope}
\end{tikzpicture}
\\ (c) The subgraph $G_3$ \quad\quad\quad\quad\quad\quad\quad\quad\quad\quad
(d)  The subgraph $G_4$
\end{center}
\caption{A decomposition of $T_{16}$ in which each subgraph is a bar visibility digraph.}
\label{f1}
\end{figure}
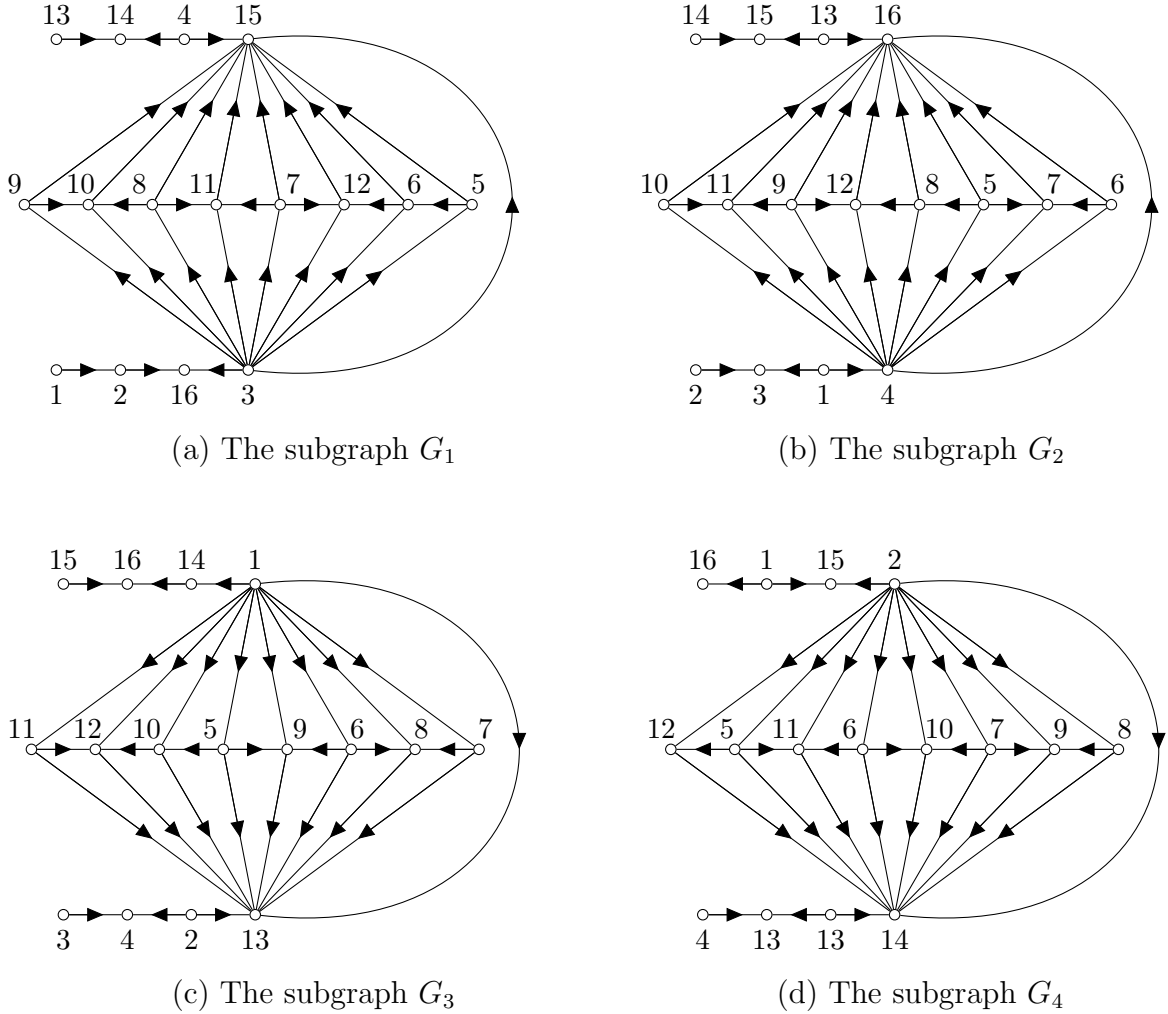

Theorem~\ref{n/4} yields $b(T_{11})\le b(T_{12})\le 3$, which also follows
from the construction in~\cite{ABHW2013} for $b(T_{15})\le 3$.  Proving
Conjecture~\ref{con1} that $b(T_{11})=3$ requires the lower bound, which will
follow from our results in the next section.  They also yield $b(T_{17})\ge4$,
which with Lemma~\ref{n+2} and $b(T_{15})\le 3$ from~\cite{ABHW2013} implies
$b(T_{17})=4$.  It remains open whether $b(T_{16})$ is $3$ or $4$.

\section{$b(T_n)/n$: Convergence and a Lower Bound}
In this section we prove that $b(T_n)/n$ converges as $n\to\infty$ and derive a
nontrivial lower bound on it that implies $b(T_{11})\ge3$ and $b(T_{17})\ge4$.

\begin{observation}[\cite{ABHW2013}]
If $G$ is a digraph with underlying graph $\hat{G}$, then $b(G)\geq b(\hat{G})$.
\end{observation}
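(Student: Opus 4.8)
The plan is to show that a $t$-bar visibility representation of $G$, read without regard to the up/down orientation of its visibilities, is already a $t$-bar visibility representation of $\hG$. First I would set $t=b(G)$ and fix a $t$-bar visibility representation $R$ of $G$: each vertex gets at most $t$ horizontal bars, and for distinct $x,y$ there is an arc $xy$ in $G$ if and only if some bar of $x$ sees some bar of $y$ above it through an unblocked channel. I then regard $R$ merely as a family of bars in the plane, discarding the directional interpretation.

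The key step is to verify that in $R$, viewed undirectedly, two distinct vertices $x$ and $y$ have mutually visible bars precisely when $xy\in E(\hG)$. The statement ``some bar of $x$ sees some bar of $y$'' (ignoring which of the two bars is higher) is the disjunction of ``some bar of $x$ sees some bar of $y$ above it'' and ``some bar of $y$ sees some bar of $x$ above it''; by the defining property of $R$ these two conditions say exactly that $xy$ is an arc of $G$ and that $yx$ is an arc of $G$, respectively. Their disjunction is exactly the condition that $xy$ is an edge of $\hG$. Since $R$ still uses at most $t$ bars per vertex, it is a $t$-bar visibility representation of $\hG$, whence $b(\hG)\le t=b(G)$.

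The point that makes this equivalence clean — and essentially the only thing one needs to check — is that $R$ has no spurious visibilities: if a bar of $x$ can see a bar of $y$ at all, then $R$ being a valid representation of $G$ forces one of $xy,yx$ to be an arc, so passing to the underlying graph introduces no edge outside $\hG$; conversely every edge of $\hG$ arises from an arc of $G$ and hence from a genuine unblocked channel already present in $R$. I therefore expect no real obstacle here: the argument is a direct reinterpretation of the same drawing, and the inequality $b(G)\ge b(\hG)$ falls out immediately.
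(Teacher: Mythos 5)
Your argument is exactly the paper's one-line proof, spelled out in more detail: the same drawing, read without the up/down orientation, is a $t$-bar visibility representation of $\hat G$, and your verification that no spurious edges arise is correct. Nothing to add.
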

\begin{proof}
A $t$-bar representation of $G$ is also a $t$-bar
representation of its underlying graph.
\end{proof}

Chang et al. \cite{Chang04} proved that the complete graph $K_n$ has bar
visibility number $\CL{n/6}$ for $n\geq 7$; thus also $b(T_n)\ge\CL{n/6}$ for
$n\ge7$.  With Theorem~\ref{upper}, it follows that
$1/6\le b(T_n)/n\le 3/14+O(1/n)$.  To prove that $b(T_n)/n$ converges,
we need the following lemma, which also yields the upper bound of $3/14$
from $b(T_{15})=3$.

\begin{lemma}[\cite{ABHW2013}]\label{upper2}
If $b(T_l)=t$ for some $l$, then $b(T_n)\leq \frac{tn}{l-1}+O(1)$
for sufficiently large $n$.
\end{lemma}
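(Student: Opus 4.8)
The plan is to reduce to a convenient value of $n$ and then build a representation of $T_n$ by inflating a $t$-bar representation of $T_l$ through a vertex-substitution.

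First I would reduce. By Lemma~\ref{n+2} the sequence $b(T_1),b(T_2),\dots$ is nondecreasing, so in particular $b(T_{l-1})\le b(T_l)=t$, and $b(T_n)\le b(T_{n'})$ whenever $n\le n'$. Hence it suffices to prove $b(T_{(l-1)k})\le tk+c$ for all $k\ge1$, where $c=c(l)$: given an arbitrary $n$, set $k=\lceil n/(l-1)\rceil$ and $n'=(l-1)k$, so that $n\le n'<n+l-1$ and $b(T_n)\le b(T_{n'})\le tk+c=\frac{tn}{l-1}+O(1)$, the error being $O(1)$ since $l$ (hence $t$ and $c$) is fixed.

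For the main step, note that $T_{(l-1)k}$ is exactly the digraph obtained from $T_{l-1}$ by \emph{substituting} a copy of $T_k$ for each vertex (replace a vertex $u$ by a module $M_u\cong T_k$, put all arcs from $M_u$ to $M_w$ when $uw$ is an arc of $T_{l-1}$, and order $M_u$ internally as $T_k$); a substitution of transitive tournaments into a transitive tournament is again transitive, with $(l-1)k$ vertices. So fix a transitive order on $V(T_{l-1})$ and a representation $\rho$ of $T_{l-1}$ using at most $t$ bars per vertex. I would inflate $\rho$ as follows: first stretch $\rho$ horizontally so that every channel witnessing an arc of $T_{l-1}$ becomes very wide; then, for each vertex $u$ and each of its $\le t$ bars $\beta$, delete $\beta$ and insert, in a thin horizontal slab about the site of $\beta$, a gadget holding one bar for each vertex of $M_u$. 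The gadgets are built so that, collectively over the $\le t$ slabs of $u$, they realize the internal order $T_k$ of $M_u$ (affordable, since Theorem~\ref{n/4} gives a representation of $T_k$ with at most $\lceil k/4\rceil$ bars per vertex, well within the budget $tk$), while in each slab the $k$ bars are spread across the corresponding widened channel so that, for every arc $uw$ of $T_{l-1}$, every vertex of $M_u$ sees every vertex of $M_w$. Since new bars are placed only inside the thin slabs and never inside the old channels of $\rho$, no nonadjacency of $T_{l-1}$ becomes a visibility, so no inter-module visibility appears that should be absent. Counting the bars received by a vertex of $M_u$ --- a bounded number per slab of $u$, plus at most $\lceil k/4\rceil$ for the internal $T_k$ --- gives at most $tk+O(1)$, as required.

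The crux, and the step I expect to be the real obstacle, is the geometric realization just sketched: because visibility cannot pass through an opaque bar, a bar of a module vertex cannot ``relay'' through a module lying between it and a target, so each of the $k$ vertices of $M_u$ genuinely needs sightlines to all $k$ vertices of each target module $M_w$, and all of these --- for the $\binom{l-1}{2}$ module pairs simultaneously --- must be threaded through the widened channels of $\rho$ without blocking one another and without colliding with the internal $T_k$-representations of the intermediate modules, all while keeping the per-vertex bar count at $tk+O(1)$ rather than something of order $lk$. By contrast, the reduction in the first step and the appeal to Theorem~\ref{n/4} for the internal structure are routine.
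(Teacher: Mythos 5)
Your reduction to $n=(l-1)k$ via Lemma~\ref{n+2} is fine, but the main step cannot be carried out, and the step you yourself flag as ``the crux'' is precisely where it breaks. For an arc $uw$ of $T_{l-1}$, the substitution requires every vertex of $M_u$ to see every vertex of $M_w$, i.e., you must realize the complete bipartite digraph $\tK_{k,k}$ using only the $O(1)$ bars per module-vertex that your slabs provide (one bar per vertex of $M_u$ in each of the at most $t$ slabs). But $b(\tK_{k,k})=i(K_{k,k})$ by Lemma~\ref{pro2}, and this is unbounded in $k$: already Lemma~\ref{Kvisibility} shows $\tK_{t^2+t-1,t+1}$ needs $t$ bars per vertex, so $b(\tK_{k,k})$ grows at least like $\sqrt{k}$ (in fact like $k/2$). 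No widening of channels helps; a bar is opaque, so the $k^2$ sightlines between two modules cannot all be threaded past one another with boundedly many bars per vertex. The arithmetic confirms the impossibility: your own itemization gives each vertex a bounded number of bars per slab plus $\CL{k/4}$ for the internal $T_k$, i.e., $\CL{k/4}+O(1)$ bars in total, not $tk+O(1)$. If a representation of $T_{(l-1)k}$ with $\CL{k/4}+O(1)$ bars per vertex existed, then with $n=(l-1)k$ and any $l\ge3$ we would get $b(T_n)\le \frac{n}{4(l-1)}+O(1)\le \frac{n}{8}+O(1)$, contradicting the lower bound $b(T_n)\ge\CL{n/6}$ (from $b(K_n)=\CL{n/6}$) and Theorem~\ref{lower}. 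So the substitution approach cannot yield the lemma: the inter-module joins, not the internal copies of $T_k$, are the expensive part, and they are exactly what a modular construction cannot pay for.

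The paper's proof avoids complete bipartite joins entirely by using a design-theoretic decomposition. By Wilson's theorem, for $n$ sufficiently large there is $m$ with $n\le m\le n+c_l$ such that $K_m$ decomposes into copies of $K_l$; each block induces a copy of $T_l$ in $T_m$, which is drawn in its own vertical strip with at most $t$ bars per vertex. Every pair of vertices lies together in exactly one block, so all arcs are realized with no unwanted visibilities, and each vertex lies in $(m-1)/(l-1)$ blocks, hence receives at most $t(m-1)/(l-1)$ bars; deleting the bars of the $m-n$ surplus vertices is harmless by transitivity, giving $b(T_n)\le tn/(l-1)+O(1)$.
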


This lemma is based on the famous result of Wilson~\cite{Wilson} implying
that when $n$ is sufficiently large, there exists $m$ with $n\le m\le n+c_l$
such that $K_m$ decomposes into copies of $K_l$ (called a {\it Steiner system}).
In $T_n$, the vertex sets of these copies induce copies of $T_l$, which has
a bar visibility representation using at most $t$ bars per vertex.  In the
decomposition, each copy containing a vertex $v$ uses $l-1$ of the edges
incident to it in $K_m$, so each vertex appears in $(m-1)/(l-1)$ copies in the
decomposition.  We thus obtain a representation of $T_m$ using at most
$t(m-1)/(l-1)$ bars per vertex, and then deleting bars for any $m-n$ vertices
does not introduce unwanted visibilities.  Thus $b(T_n)\le tn/(l-1)+O(1)$.

\begin{theorem}\label{limit}
$b(T_n)/n$ converges.
\end{theorem}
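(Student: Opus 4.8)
The plan is to show that $\limsup_{n\to\infty} b(T_n)/n$ and $\liminf_{n\to\infty} b(T_n)/n$ coincide, using Lemma~\ref{upper2} as the engine. First note that the sequence $b(T_n)/n$ is bounded: Theorem~\ref{n/4} gives $b(T_n)\le\CL{n/4}$, so $0\le b(T_n)/n\le 1$ for all $n$ (and in fact the easy lower bound $b(T_n)\ge\CL{n/6}$ shows it stays bounded away from $0$, though that is not needed here). Hence $L:=\liminf_{n\to\infty} b(T_n)/n$ and $U:=\limsup_{n\to\infty} b(T_n)/n$ are finite real numbers with $L\le U$, and it suffices to prove $U\le L$.

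To do this, I would choose a subsequence $(l_k)_{k\ge1}$ with $l_k\to\infty$ and $b(T_{l_k})/l_k\to L$. Fix $k$ and apply Lemma~\ref{upper2} with $l=l_k$ and $t=b(T_{l_k})$: there is a constant $c_k$ (depending on $l_k$) such that $b(T_n)\le \frac{b(T_{l_k})\,n}{l_k-1}+c_k$ for all sufficiently large $n$. Dividing by $n$ and letting $n\to\infty$, the additive term $c_k/n$ vanishes, so
\[
U \;=\; \limsup_{n\to\infty}\frac{b(T_n)}{n}\;\le\;\frac{b(T_{l_k})}{l_k-1}.
\]
This inequality holds for every $k$, while its left-hand side does not depend on $k$.

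Now let $k\to\infty$. Since
\[
\frac{b(T_{l_k})}{l_k-1}\;=\;\frac{b(T_{l_k})}{l_k}\cdot\frac{l_k}{l_k-1}
\;\longrightarrow\; L\cdot 1\;=\;L,
\]
we conclude $U\le L$, hence $U=L$ and $\lim_{n\to\infty} b(T_n)/n$ exists. The only subtlety—and the step I expect to need the most care—is the mismatch between the denominator $l-1$ in Lemma~\ref{upper2} and the normalization by $l$ in the definition of $L$; this is precisely why it is essential to run the argument along a subsequence with $l_k\to\infty$, so that the correction factor $l_k/(l_k-1)$ tends to $1$. Everything else is just bookkeeping with limsup/liminf, and the dependence of the $O(1)$ term on $l_k$ is harmless because $n$ is sent to infinity before $k$ is.
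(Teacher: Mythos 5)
Your proposal is correct and rests on the same key tool as the paper, Lemma~\ref{upper2}: the paper argues by contradiction with an explicitly chosen large $l$ (namely $l>\frac{3(a+b)}{b-a}+1$ where $a$ and $b$ are the liminf and limsup), whereas you run the same estimate directly along a subsequence realizing the liminf, which is a cleaner organization of the identical idea. Your handling of the two subtleties --- the $l_k/(l_k-1)$ correction factor and the dependence of the $O(1)$ term on $l_k$ (harmless because $n\to\infty$ is taken first) --- is exactly right.
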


\begin{proof}
Let $a=\liminf b(T_n)/n$ and $b=\limsup b(T_n)/n$.
If $b(T_n)/n$ does not converge, then $a<b$.
By the definitions of $\liminf$ and $\limsup$, there is a positive integer $l$
with $l>\frac{3(a+b)}{b-a}+1$ such that $b(T_l)/l=c$, where $a\le c<(a+b)/2$.
That is, $b(T_l)=cl$.  By Lemma~\ref{upper2}, $b(T_n)\leq \frac{cln}{l-1}+O(1)$
for sufficiently large $n$.  For sufficiently large $n$, we then have
\begin{eqnarray*}
\ \frac{b(T_n)}{n} & \leq & \frac{c\cdot l}{l-1}+o(1)
~ < ~ \frac{a+b}{2}+\frac{a+b}{2}\cdot \frac{1}{l-1} \\
& < & \frac{a+b}{2}+\frac{a+b}{2}\cdot \frac{1}{\frac{3(a+b)}{b-a}+1-1}
~ = ~ \frac{a+2b}{3}
~ < ~ b,
\end{eqnarray*}
which contradicts $\limsup b(T_n)/n=b$.
\end{proof}

The lower bound uses an undirected graph associated with a $t$-bar visibility
representation.

\begin{definition}\label{derived}
The {\it derived graph} of a $t$-bar visibility representation is a plane
graph obtained by introducing an edge for each pair of bars that see each other
along an unblocked channel (omitting loops) and then shrinks each bar to a
point, keeping its edges.
\end{definition}

Given a $k$-bar visibility representation of $K_n$, the derived graph is
a planar graph $H$ with at most $kn$ vertices and at least $\CH n2$ edges.
Euler's Formula then requires $\CH n2\le 3kn-6$, which simplifies to
$k>(n-1)/6$ since $k$ is an integer.  We improve on this lower bound by showing
that at least $k^2-k$ of the edges in $H$ duplicate visibilities and hence are
wasted.

\begin{theorem}\label{lower}
The transitive tournament $T_n$ on $n$ vertices satisfies
$$b(T_n)\geq \frac{3n-5-\sqrt{7n^2-28n+25}}{2}>\frac{3-\sqrt{7}}{2}n+\sqrt{7}-\frac{5}{2}.$$
Therefore $\lim b(T_n)/n\ge (3-\sqrt{7})/{2}\approx 0.177124$.
\end{theorem}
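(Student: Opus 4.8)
The plan is to fix a $t$-bar visibility representation of $T_n$ with $t=b(T_n)$, pass to its derived graph $H$ (Definition~\ref{derived}), and play a lower bound on $|E(H)|$, coming from the arcs that must be represented, against the upper bound $|E(H)|\le 3|V(H)|-6$ from Euler's Formula together with $|V(H)|\le tn$.

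First I would normalize the representation. Index the vertices $v_1,\dots,v_n$ so that the arcs are exactly $\{v_iv_j\st i<j\}$. A bar that sees nothing may be deleted, and if the set of vertices seen by one bar of $v_i$ is contained in the set seen by another bar of $v_i$, the first bar may be deleted as well; neither operation creates unwanted visibilities because $T_n$ is transitive. After these clean-ups each vertex uses at most $t$ bars, so $|V(H)|\le tn$, and since $H$ is a simple plane graph, $|E(H)|\le 3|V(H)|-6\le 3tn-6$. On the other side, for every pair $i<j$ some bar of $v_i$ sees some bar of $v_j$ from below; these edges are pairwise distinct because their endpoints lie in distinct classes, so $|E(H)|\ge\CH n2$. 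Already $\CH n2\le 3tn-6$ recovers the Euler bound $t>(n-1)/6$.

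The crux --- and the step I expect to be the main obstacle --- is to improve the lower bound on $|E(H)|$ by showing that many visibilities are redundant. The mechanism is that transitivity makes it costly for a vertex to ``see everything'': the bars of $v_i$ must collectively receive an arc from each of $v_1,\dots,v_{i-1}$ from below and send an arc to each of $v_{i+1},\dots,v_n$ from above, while in a bar visibility representation the set of bars a fixed bar sees from above, and the set it sees from below, each occupy a contiguous arc of its rotation in the plane. Distributing these obligations among several bars of $v_i$ (or among the several bars of the vertices they must see) cannot be done without producing some pair of classes joined by at least two bar--bar edges, and I would make this quantitative, bounding below the number of such wasted edges by an explicit quadratic $q(t)$ with leading coefficient $1$; the clean-up condition alone already forces $q(t)$ of order $t^2$, since a vertex using $t$ bars generates many repeated adjacencies among the vertices it must see. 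Combining $|E(H)|\ge\CH n2+q(t)$ with $|E(H)|\le 3tn-6$ gives a quadratic inequality in $t$ of the form $2t^2-(6n-10)t+(n^2-n)\le 0$; its leading coefficient is positive, so $t$ lies at or above the smaller root, namely $\frac{3n-5-\sqrt{7n^2-28n+25}}{2}$.

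The remaining claims are routine. Because $(\sqrt7\,n-2\sqrt7)^2=7n^2-28n+28>7n^2-28n+25$, we have $\sqrt{7n^2-28n+25}<\sqrt7\,n-2\sqrt7$, so $b(T_n)>\frac{3-\sqrt7}{2}n+\sqrt7-\frac52$; dividing by $n$, letting $n\to\infty$, and invoking Theorem~\ref{limit} yields $\lim b(T_n)/n\ge(3-\sqrt7)/2$. Evaluating the exact bound at $n=11$ gives $b(T_{11})\ge3$, which with $b(T_{11})\le b(T_{12})\le 3$ from Theorem~\ref{n/4} proves Conjecture~\ref{con1} and also gives $b(T_{12})=3$; evaluating at $n=17$ gives $b(T_{17})\ge4$, which with Lemma~\ref{n+2} and $b(T_{15})=3$ (Theorem~\ref{smalln}) gives $b(T_{17})=4$.
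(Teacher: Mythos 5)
The overall strategy is the right one (it matches the paper's: derived graph, Euler bound, plus a count of ``wasted'' visibilities), but the step you yourself flag as the crux is exactly the step that is missing, and the heuristic you offer for it does not work. You claim that ``the clean-up condition alone already forces $q(t)$ of order $t^2$, since a vertex using $t$ bars generates many repeated adjacencies among the vertices it must see.'' This is false: in the construction of Theorem~\ref{n/4}, $T_{4m}$ is decomposed into $m$ arc-disjoint bar visibility digraphs, so each vertex has $m$ bars whose visibility sets are pairwise disjoint --- no repeated adjacency is forced merely by a vertex having many bars, and no bar is redundant under your clean-up rule. The paper's actual mechanism is different and more specific: it first merges all bars of the source $v_1$ (resp.\ the sink $v_n$) into a single bar extended to span the full horizontal extent of the picture. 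Once the level-$1$ bar spans everything, \emph{every} bar at every level $j\ge 2$ is seen by at least one bar from below, so the $k$ bars at level $j$ receive at least $k$ incoming edges in $H$, while only $j-1$ are needed to represent the arcs into $v_j$; summing $k-j+1$ over $2\le j\le k$, and symmetrically at the top (these two contributions being disjoint because $k\le n/2$, by Lemma~\ref{n+2}), gives the $k^2-k$ extra edges. Nothing in your sketch produces this count.

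There is also an arithmetic inconsistency that signals the missing ingredient. With your vertex bound $|V(H)|\le tn$ and a wasted-edge count of $t^2-t$, the inequality $\CH n2+t^2-t\le 3tn-6$ simplifies to $2t^2-(6n+2)t+n^2-n+12\le 0$, not to $2t^2-(6n-10)t+n^2-n\le 0$; the resulting lower bound $\frac{3n+1-\sqrt{7n^2+8n-23}}{2}$ is strictly weaker and, e.g., at $n=11$ gives only $b(T_{11})>1.90$, which does not prove $b(T_{11})\ge 3$. The paper's coefficient $6n-10$ comes precisely from the merging step: the representation uses only $k(n-2)+2$ bars, so $|E(H)|\le 3k(n-2)$, and the saved $6(k-1)$ in the Euler bound is essential to the stated constants. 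So you need both pieces you currently lack: the merged-and-extended bars for $v_1$ and $v_n$ (which simultaneously shrink $|V(H)|$ and force every other bar to be seen from below and from above), and the resulting level-by-level count of forced extra edges. Your final reductions (the algebraic comparison with $\frac{3-\sqrt7}{2}n+\sqrt7-\frac52$ and the applications to $n=11,17$) are correct as written.
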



\begin{proof}
Let $k=b(T_n)$; by Lemma~\ref{n+2}, $k\le n/2$.
Begin with a $k$-bar visibility representation of $T_n$ giving $k$ bars to each
vertex.  Index the vertices as $\VEC v1n$ so that all arcs
are oriented from $v_i$ to $v_j$ with $i<j$.  With this vertex ordering, we can
shift bars vertically so that each bar has vertical coordinate equal to the
index of its assigned vertex.  We can also combine the bars for $v_1$ into a
single bar and those for $v_n$ into a single bar and extend each to have the
leftmost left endpoint and rightmost right endpoint among all bars.

The result is again a $k$-bar visibility representation of $T_n$, using
altogether $k(n-2)+2$ bars.  Its derived graph $H$ is a planar graph with
$k(n-2)+2$ vertices.  Thus $H$ has at most $3k(n-2)$ edges.
To derive a lower bound on $\C{E(H)}$, we begin by selecting $\CH n2$ edges
consisting of one each from level $i$ to level $j$ for all $i$ and $j$
such that $1\le i<j\le n$.  Next we find extra edges.

For each $j$ with $2\le j\le k$, there are $k$ bars at level $j$.
Because we have extended the bar at level $1$ to be leftmost and rightmost,
each bar at level $j$ is seen by at least one bar from below.  Hence the
vertices representing $v_j$ in $H$ together have at least $k$ edges entering
them in $H$.  However, in $T_n$ there are only $j-1$ edges entering $v_j$,
and we have already counted that many.  Hence there are at least $k-j+1$
extra edges entering the vertices at level $j$.  Summing over all such $j$,
there are at least $k(k-1)/2$ extra edges entering these vertices.

A symmetric argument applies to edges leaving vertices near the top.  For each
$i$ with $n-k+1\le i\le n-1$, there are at least $k-(n-i)$ extra edges leaving
the vertices at level $i$.  Summing over all such $i$, we have found at least
$k(k-1)/2$ extra edges from vertices at level $i$ to higher vertices.
Since $k\le n/2$, these two sets of extra edges are disjoint, and $H$ has at
least $k(k-1)$ edges beyond those selected to represent actual edges of $T_n$.

We now have the inequality
$$\frac{n(n-1)}{2}+k^2-k\leq 3k(n-2),$$
which simplifies to $2k^2-(6n-10)k+n^2-n\leq 0$.  Solving the quadratic
inequality yields
$$k\ge\frac{3n-5-\sqrt{7n^2-28n+25}}{2}>\frac{3-\sqrt7}{2}n+\sqrt7-\frac 52.$$
This completes the proof.
\end{proof}

\begin{corollary}
$b(T_{11})=b(T_{12})=3$ and $b(T_{17})=4$.
\end{corollary}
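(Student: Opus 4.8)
The corollary follows by combining the new lower bound in Theorem~\ref{lower} with the upper bounds already available from Theorem~\ref{n/4} and Theorem~\ref{smalln} together with the monotonicity in Lemma~\ref{n+2}. Let me plan the three equalities.

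For $b(T_{11})$: The upper bound $b(T_{11})\le 3$ is immediate from Theorem~\ref{n/4}, since $\lceil 11/4\rceil = 3$ (or from $b(T_{11})\le b(T_{12})\le b(T_{15})=3$ via Lemma~\ref{n+2} and Theorem~\ref{smalln}). For the lower bound, I plug $n=11$ into Theorem~\ref{lower}. I would compute $7n^2 - 28n + 25 = 7\cdot 121 - 308 + 25 = 847 - 308 + 25 = 564$, so the bound reads $b(T_{11}) \ge (33 - 5 - \sqrt{564})/2 = (28 - \sqrt{564})/2$. Since $\sqrt{564} < \sqrt{576} = 24$, this is greater than $(28-24)/2 = 2$, forcing $b(T_{11}) \ge 3$. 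Hence $b(T_{11}) = 3$, which proves Conjecture~\ref{con1}.

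For $b(T_{12})$: The lower bound $b(T_{12}) \ge b(T_{11}) = 3$ follows from Lemma~\ref{n+2}; the upper bound $b(T_{12}) \le 3$ is $\lceil 12/4 \rceil = 3$ from Theorem~\ref{n/4}. So $b(T_{12}) = 3$. For $b(T_{17})$: I plug $n = 17$ into Theorem~\ref{lower}. Here $7n^2 - 28n + 25 = 7\cdot 289 - 476 + 25 = 2023 - 476 + 25 = 1572$, and $3n - 5 = 46$, so $b(T_{17}) \ge (46 - \sqrt{1572})/2$. Since $\sqrt{1572} < \sqrt{1600} = 40$, this exceeds $(46-40)/2 = 3$, so $b(T_{17}) \ge 4$. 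For the matching upper bound, Theorem~\ref{smalln} gives $b(T_{15}) = 3$, so by Lemma~\ref{n+2} (applied with $n = 15$, using $b(T_{n+2}) \le b(T_n) + 1$) we get $b(T_{17}) \le b(T_{15}) + 1 = 4$. Therefore $b(T_{17}) = 4$.

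There is no real obstacle here — the proof is a short assembly of inequalities. The only thing to be careful about is making sure each square-root estimate is strict in the right direction (using the nearest perfect square just above the radicand), and invoking the correct half of Lemma~\ref{n+2} for each step (the ``$+1$'' direction for the $T_{17}$ upper bound, the monotonicity direction for the $T_{12}$ lower bound). I would write the proof as three short paragraphs, one per tournament, each displaying the relevant numerical evaluation of the formula in Theorem~\ref{lower} and then citing Theorem~\ref{n/4}, Theorem~\ref{smalln}, or Lemma~\ref{n+2} for the matching bound.
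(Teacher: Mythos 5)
Your proposal is correct and follows essentially the same route as the paper: plug $n=11$ and $n=17$ into Theorem~\ref{lower} to get lower bounds exceeding $2$ and $3$ respectively, then obtain matching upper bounds from the known values $b(T_{13})=b(T_{15})=3$ (or equivalently $\lceil n/4\rceil$ from Theorem~\ref{n/4}) together with Lemma~\ref{n+2}. Your numerical evaluations agree with the paper's (note $(28-\sqrt{564})/2=14-\sqrt{141}$ and $(46-\sqrt{1572})/2=23-\sqrt{393}$), so nothing further is needed.
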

\begin{proof}
Set $n=11$ and $n=17$ in the formula of Theorem~\ref{lower} to obtain the lower
bounds.  We have
$b(T_{11})\geq \frac{3\cdot11-5-\sqrt{7\cdot 11^2-28\cdot 11+25}}{2}=14-\sqrt{141}\approx 2.1257$
and
$b(T_{17})\geq \frac{3\cdot17-5-\sqrt{7\cdot 17^2-28\cdot
17+25}}{2}=23-\sqrt{393}\approx 3.1758$,
so $b(T_{11})\ge3$ and $b(T_{17})\ge4$.
For the upper bounds, we cite Lemma~\ref{n+2} and the known values
$b(T_{13})=b(T_{15})=3$ from Theorem~\ref{smalln}.
\end{proof}

\section{Bar visibility numbers of a graph and its orientations}

In this section, we provide an undirected graph $\hG$ having a orientation $G$
with $b(G)>2b(\hG)$, thereby disproving Conjecture~\ref{con2}.  Nevertheless,
upper bounds in terms of $b(\hG)$ do hold.

The tool we use for the construction is the interval number of a graph.
A {\it $t$-interval representation} of a graph $H$ assigns to each vertex
$v\in V(H)$ at most $t$ intervals in $\mathbb{R}$ so that $uv\in E(H)$ if and
only if some interval assigned to $u$ intersects some interval assigned to $v$.
The {\it interval number $i(H)$} of $H$ is the minimum $t$ such that $H$ has a
$t$-interval representation.  A $t$-interval representation of $H$ has
{\it depth $2$} if no point on the real line lies in intervals assigned to more
than two vertices.  The proof of the following theorem published originally
in~\cite{SW1983} was flawed, but a different and shorter proof has now been
published by Gu\'egan, Knauer, Rollin, and Ueckerdt~\cite{GKRU}.

\begin{theorem}[\cite{GKRU}]\label{interval}
Every planar graph has interval number at most $3$, and this is sharp.
\end{theorem}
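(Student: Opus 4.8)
The plan is to prove the two halves of the statement separately: the upper bound $i(H)\le 3$ for every planar graph $H$, and an example showing that $2$ intervals per vertex do not always suffice.

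For the upper bound, the obvious first move is a structural decomposition. It is known that every planar graph admits an edge-decomposition into two outerplanar graphs, and an outerplanar graph has interval number at most $2$; this already gives $i(H)\le 4$, so the real content is shaving off the last unit. The route I would try first is a finer decomposition: show that every planar graph is the edge-union of an outerplanar graph and a forest of caterpillars. A forest of caterpillars is an interval graph, so such a decomposition yields $i(H)\le 2+1=3$. If no such decomposition exists, I would instead build the representation directly from a planar embedding.

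For the direct construction I would fix a planar embedding of a connected $H$ and use a shelling (canonical) vertex ordering $v_1,\dots,v_n$: when $v_k$ is inserted it is joined to a consecutive arc of the current outer boundary, the two ends of that arc are already-placed neighbours, and each interior vertex of the arc is covered exactly once over the whole process. Processing left to right on the real line, I would maintain the invariant that the live (rightmost) intervals of the current boundary vertices occur in boundary order; then $v_k$ needs one new interval spanning the block of its boundary neighbours, plus a bounded number of extra intervals to reopen the two endpoint vertices that must remain visible to later vertices. The crux is to bound the number of intervals per vertex by exactly $3$ rather than $4$: a vertex can be an arc endpoint at many steps, and one must show that at most two of those occurrences force a fresh interval. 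I expect this bookkeeping to be the main obstacle; it is presumably where the originally published proof went wrong and where the argument of Gu\'egan, Knauer, Rollin, and Ueckerdt is careful. Two easy reductions help throughout: $i(\cdot)$ is the maximum over connected components, so $H$ may be assumed connected; but $i(\cdot)$ is \emph{not} monotone under edge deletion (e.g.\ $i(K_4)=1$ while the spanning subgraph $C_4$ has $i(C_4)=2$), so one cannot reduce to triangulations and the construction must handle arbitrary planar graphs.

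For sharpness, it suffices to produce one planar graph $H_0$ with no $2$-interval representation. I would build $H_0$ from many copies of a small rigid gadget, arranged so that in any alleged $2$-interval representation each gadget forces a local left-to-right orientation on a few of its vertices while adjacent gadgets impose incompatible orientations; a global count — comparing the edges forced by the gadgets against what $2n$ intervals can realize without a spurious adjacency, using planarity to control the interactions between gadgets — then yields a contradiction. Designing the gadget so that the local constraints genuinely propagate is the delicate point. Once $H_0$ is fixed, verifying $i(H_0)\ge 3$ is a finite case check, and combined with the upper bound this gives $i(H_0)=3$, so the bound is sharp.
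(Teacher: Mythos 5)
First, a point of comparison that matters here: the paper does not prove this statement at all --- it is quoted from Gu\'egan, Knauer, Rollin, and Ueckerdt \cite{GKRU}, and the surrounding text even remarks that the originally published proof in \cite{SW1983} was flawed. So there is no internal proof to measure your attempt against; I can only assess your plan on its own terms, and as written it has genuine gaps in both halves.

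For the upper bound, your primary route rests on the claim that every planar graph decomposes into an outerplanar graph and a caterpillar forest. Since a caterpillar forest is itself outerplanar, this would strengthen Gon\c{c}alves' theorem that planar graphs edge-decompose into two outerplanar graphs --- already a difficult result resolving a long-standing conjecture --- and no such strengthening is known; you neither cite a source nor argue for it, so the route collapses to the (correct) conclusion $i(H)\le 2+2=4$. Your fallback, the direct construction from a canonical ordering, stops exactly at the decisive point: you acknowledge that bounding the number of fresh intervals per vertex by $3$ rather than $4$ is ``where the originally published proof went wrong,'' which is precisely right --- that charging argument is the entire content of the theorem, and leaving it unresolved means the upper bound is not proved. (Your auxiliary observations are fine: subadditivity of $i$ under edge-decomposition, reduction to connected graphs, and the non-monotonicity of $i$ under edge deletion illustrated by $K_4$ versus $C_4$ are all correct.) For sharpness, you propose to design a gadget but never fix one or verify anything; a concrete witness already exists and is reproduced in this very paper as the graph $\hat{G}_1$ of Figure~\ref{Graph G1}, namely $K_{2,9}$ with a pendant edge attached to each vertex of the $9$-side, which Scheinerman and West \cite{SW1983} showed has interval number $3$ by analyzing how depth-$2$ interval representations of $K_{2,9}$ must be arranged. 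As it stands, neither the bound $i(H)\le 3$ nor its sharpness is established by your proposal.
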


In \cite{SW1983}, Scheinerman and West showed that the planar graph $\hG_1$ in
Figure~\ref{Graph G1} has interval number $3$.  This graph arises by adding
a pendent edge at each vertex in the larger part of the complete bipartite
graph $K_{2,9}$.

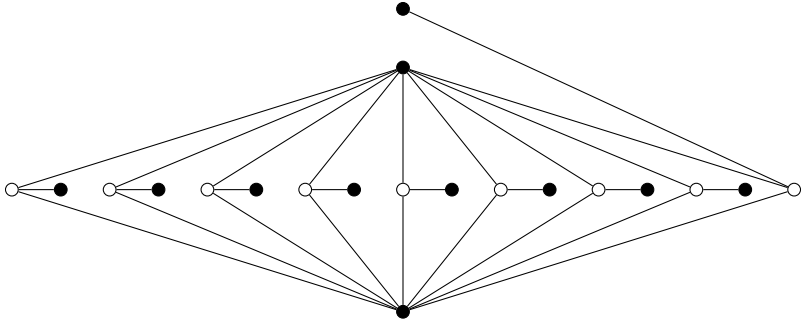
\begin{figure}[htbp]
\centering
\begin{tikzpicture} [scale = 0.65,
	a/.style={circle, draw,  fill=black, inner sep=1.7pt},
	b/.style={circle, draw,  inner sep=1.7pt}]

	\node[a] (a10) at (0,-2.5)  {};
	\node[a] (a11) at (0,2.5)   {};
	\node[a] (a1) at (-7,0)  {};
	\node[a] (a2) at (-5,0)  {};
	\node[a] (a3) at (-3,0)   {};
	\node[a] (a4) at (-1,0)  {};
	\node[a] (a5) at (1,0)  {};
	\node[a] (a6) at (3,0)   {};
	\node[a] (a7) at (5,0)  {};
	\node[a] (a8) at (7,0)  {};
	\node[a] (a9) at (0,3.7)   {};

	\node[b] (b1) at (-8,0)    {};
	\node[b] (b2) at (-6,0)    {};
	\node[b] (b3) at (-4,0)    {};
	\node[b] (b4) at (-2,0)   {};
	\node[b] (b5) at (0,0)    {};
	\node[b] (b6) at (2,0)   {};
	\node[b] (b7) at (4,0)   {};
	\node[b] (b8) at (6,0)   {};
	\node[b] (b9) at (8,0)   {};
	
	\draw (a10) to (b1);
	\draw (a10) to (b2);
	\draw (a10) to (b3);
	\draw (a10) to (b4);
	\draw (a10) to (b5);
	\draw (a10) to (b6);
	\draw (a10) to (b7);
	\draw (a10) to (b8);
	\draw (a10) to (b9);
	
	\draw (a11) to (b1);
	\draw (a11) to (b2);
	\draw (a11) to (b3);
	\draw (a11) to (b4);
	\draw (a11) to (b5);
	\draw (a11) to (b6);
	\draw (a11) to (b7);
	\draw (a11) to (b8);
	\draw (a11) to (b9);
	
	\draw (a1) to (b1);
	\draw (a2) to (b2);
	\draw (a3) to (b3);
	\draw (a4) to (b4);
	\draw (a5) to (b5);
	\draw (a6) to (b6);
	\draw (a7) to (b7);
	\draw (a8) to (b8);
	\draw (a9) to (b9);
\end{tikzpicture}
\caption{The graph $\hat{G}_1$ with interval number three.}
\label{Graph G1}
\end{figure}

\begin{lemma}\label{pro1}
If $H'$ is a spanning subgraph of a triangle-free graph $H$,
then  $i(H') \leq i(H)$.
\end{lemma}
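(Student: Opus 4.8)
The plan is to take an optimal $t$-interval representation of $H$, where $t=i(H)$, and argue that \emph{the same representation already witnesses} $i(H')\le t$ once we restrict attention to the edges of $H'$. Concretely, suppose each vertex $v$ is assigned a set $\mathcal I(v)$ of at most $t$ intervals in $\mathbb R$, with $uv\in E(H)$ if and only if some interval in $\mathcal I(v)$ meets some interval in $\mathcal I(u)$. I keep the identical assignment $v\mapsto\mathcal I(v)$ and ask whether it is a valid $t$-interval representation of $H'$. Since $E(H')\subseteq E(H)$ and the representation only creates intersections among edges of $H$, the only possible failure is that two vertices $u,v$ with $uv\notin E(H')$ but $uv\in E(H)$ have intersecting interval families — a ``false'' edge of $H'$. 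So the heart of the matter is to \emph{remove} these spurious intersections without destroying any genuine edge of $H'$ and without exceeding $t$ intervals per vertex.

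The key step, and where triangle-freeness of $H$ enters, is the following local surgery. Suppose an interval $I\in\mathcal I(u)$ overlaps an interval $J\in\mathcal I(v)$, creating an unwanted edge $uv$ of $H$ that is absent from $H'$. Because $H$ is triangle-free, no third vertex $w$ can have an interval meeting the overlap region $I\cap J$ in a way that forces $w$ adjacent to both $u$ and $v$ — more carefully, I would look at the point region where $I$ and $J$ overlap and shrink one of the two intervals (say $I$) away from that region. The danger in shrinking $I$ is that $I$ might be carrying a legitimate edge $uw\in E(H')$ through its intersection with some interval of $w$; but if that intersection lies inside $I\cap J$, then $w$ meets $v$ there as well, so $uw$, $uv$, $wv$ would all be edges of $H$, contradicting triangle-freeness (here using that $uv\in E(H)$). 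Hence every legitimate intersection along $I$ lies outside $I\cap J$, so $I$ can be truncated to avoid $J$ while preserving all edges of $H$ (in particular all of $H'$) incident to $u$ through $I$. Repeating this for every unwanted pair eliminates all spurious edges; the number of intervals per vertex never increases, so we obtain a $t$-interval representation of $H'$, giving $i(H')\le t=i(H)$.

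The main obstacle I anticipate is making the ``shrink one interval'' step genuinely simultaneous or well-ordered: after truncating $I$ to avoid $J$, I must be sure I have not created a \emph{new} unwanted overlap, and I must handle the case where $u$ and $v$ already share a different overlapping pair that does correspond to an edge — but in $H'$ they are nonadjacent, so in fact \emph{no} pair of their intervals may overlap, and I simply iterate over all offending pairs, each time strictly decreasing the total measure of spurious overlap, which terminates. A cleaner alternative, which I would actually write up, is to avoid truncation altogether: for each vertex $v$ and each interval $I\in\mathcal I(v)$, replace $I$ by the (possibly empty) sub-collection of maximal subintervals of $I$ that meet only intervals of neighbors of $v$ in $H'$. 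Triangle-freeness guarantees that the ``bad'' parts of $I$ (those lying in some $I\cap J$ with $v\not\sim_{H'}\text{owner}(J)$) are disjoint from the ``good'' parts carrying $H'$-edges, so the good parts survive intact; and crucially they all lie in a single interval $I$, so they can be merged back — after the surgery each $v$ still uses at most $t$ intervals. This yields the desired representation of $H'$ and completes the proof.
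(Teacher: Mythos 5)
Your first argument is essentially the paper's proof: keep an optimal $i(H)$-interval representation of $H$ and, for each edge $uv\in E(H)\setminus E(H')$, use triangle-freeness to argue that the overlap $I\cap J$ of an interval $I$ for $u$ with an interval $J$ for $v$ contains no point of any interval of a third vertex $w$ (such a point would witness the triangle $u,v,w$ in $H$), so the overlap can be excised without destroying any other edge, and shrinking intervals never creates new adjacencies. The one place you must be more careful is the containment case: if $J$ lies strictly inside $I$, then ``truncating $I$ to avoid $J$'' removes a middle portion and splits $I$ into two pieces, which could push $u$ above $t$ intervals. The paper's rule is calibrated exactly to avoid this: if one interval contains the other, delete the smaller one outright (safe, since by the same triangle-free argument the smaller interval meets no intervals of third vertices at all); if the two intervals properly overlap, their intersection is an end-segment of each, so shortening both keeps each a single interval. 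With that choice made explicit, your iterative scheme goes through, noting as the paper does that after each deletion the represented graph is still a subgraph of $H$ and hence still triangle-free, so the step can be repeated.

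By contrast, the ``cleaner alternative'' that you say you would actually write up does not work as stated. Replacing an interval $I$ of $u$ by the maximal subintervals of $I$ that meet only intervals of $H'$-neighbors can produce several pieces from a single interval (for instance, good overlaps near both ends of $I$ with a bad overlap in between), so a vertex may end up with more than $t$ intervals; and ``merging them back'' into one interval necessarily reinstates the bad overlap sitting between the good pieces. You should drop that alternative and write up the first argument with the containment case handled as above.
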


\begin{proof}
Let $uv$ be an edge in $H$.  From an $i(H)$-interval representation of $H$,
we obtain an $i(H)$-interval representation of $H-uv$.  Consider intervals
assigned to $u$ and $v$ that intersect.  If one is contained in the other,
delete the smaller interval from the representation.  If they overlap,
shorten each by deleting their intersection.  Since $H$ is triangle-free,
the points in the intersection do not lie in intervals assigned to any other
vertices, so the operation does not delete any other edges from the graph
represented.  The operation also does not add any edges, and the represented
graph remains triangle-free.  Thus iteratively deleting the edges of
$E(H)-E(H')$ in this way yields an $i(H)$-interval representation of $H'$.
\end{proof}

An $t$-interval representation of a bipartite graph $\hG$ with intervals
on a horizontal line can be processed from left to right, shifting intervals up
or down as needed in becoming bars, to produce a $t$-bar visibility
representation of any orientation $G$ of $\hG$.  When $G$ orients all edges of
$\hG$ from one part to the other, no bar can be placed between bars for two
vertices of the other part, and hence the process can be reversed.  This yields
the following statement.

\begin{lemma}[\cite{ABHW2013}]\label{pro2}
If $G$ is an orientation of a bipartite graph $\hG$, then $b(G)\le i(\hG)$,
with equality when all edges are oriented from one part to the other.
\end{lemma}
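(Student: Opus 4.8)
The statement splits into an upper bound $b(G)\le i(\hG)$ valid for every orientation $G$, and a matching lower bound under the extra hypothesis, and I would prove these separately.

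For $b(G)\le i(\hG)$: let $t=i(\hG)$ and fix a $t$-interval representation of $\hG$ on a horizontal line with parts $X$ and $Y$, which I would first normalize by merging any two overlapping intervals of a common vertex (this only decreases interval counts). The key point is that, since $X$ and $Y$ are independent in $\hG$, an interval of one vertex of $X$ cannot overlap an interval of a \emph{different} vertex of $X$ — that would encode a non-edge — and similarly for $Y$; together with the normalization this means at most one $X$-interval and one $Y$-interval are present at any point of the line, so at most two intervals meet any vertical line. I would then sweep from left to right, turning each interval into a bar with the same horizontal extent and a height chosen on the fly: when the bar of a vertex $w$ begins, at most one bar of the opposite part, say of $w'$, is active (and $ww'\in E(\hG)$, since the two bars share the starting point), so I place the new bar just below that of $w'$ if the arc of $G$ is $w\to w'$ and just above it if the arc is $w'\to w$ (arbitrary height if no opposite-part bar is active). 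Because at most two bars meet any vertical line, two bars see each other exactly when their extents overlap, and then the tail vertex's bar lies below the head vertex's bar; conversely overlapping bars belong to distinct oppositely-parted adjacent vertices (bars of a single vertex have disjoint extents after merging), so there are no spurious visibilities. I would also perturb the interval representation so all endpoints are distinct (so each nonempty overlap has positive width, as a channel requires), and check that for an overlapping pair the later-starting interval begins while the earlier one is still active, so the correct relative height is assigned and never revised; this gives a $t$-bar visibility representation of $G$.

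For the reverse inequality when every edge of $\hG$ is oriented from $X$ to $Y$ in $G$: take a $b(G)$-bar visibility representation of $G$ (perturbed to have distinct bar endpoints) and project every bar orthogonally to the horizontal axis, giving each vertex at most $b(G)$ intervals. One direction is immediate: if $uv\in E(\hG)$ then some bar of $u$ sees some bar of $v$, so their projections overlap. For the converse I would first record two structural facts coming from the hypothesis: (A) no vertex of $X$ is the head of an arc, hence no bar of an $X$-vertex is seen from below, and therefore (taking the topmost blocking bar) the open region directly below any such bar is free of bars; (B) symmetrically, no vertex of $Y$ is the tail of an arc, so the open region directly above any bar of a $Y$-vertex is free of bars. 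If bars of $u$ and $v$ overlap in projection, (A) and (B) rule out $u,v$ being distinct vertices of the same part (the lower bar would lie in the forbidden region of the upper one); and if $u\in X$, $v\in Y$, then the bar of $u$ must lie below the bar of $v$, and no bar of a third vertex can lie strictly between them over any sub-strip — the lowest such bar would be a $Y$-bar seen from below and, having a bar above it, would contradict (B) — so the $u$-bar sees the $v$-bar and $uv\in E(\hG)$. Thus overlap of projections is equivalent to adjacency in $\hG$, giving $i(\hG)\le b(G)$ and hence equality.

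The easy half is the left-to-right sweep for $b(G)\le i(\hG)$; its only real idea is the ``at most two bars per vertical line'' reduction, which is forced by bipartiteness. The main obstacle is the reverse inequality: the crux is isolating the ``empty region'' facts (A) and (B) and then using them to exclude any bar wedged between a bar of $u$ and a bar of $v$ — once that is in place, what remains is routine bookkeeping with projections and perturbations.
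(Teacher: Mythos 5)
Your argument follows essentially the same route the paper sketches for this cited lemma: a left-to-right sweep converting intervals to bars, using the depth-2 property forced by bipartiteness (triangle-freeness) to choose heights on the fly, and reversing the process by projecting bars to intervals when all arcs go from one part to the other. The one imprecision is that your facts (A) and (B) are stated too strongly: the region directly below a bar of an $X$-vertex need not be empty of bars, since it may contain \emph{other bars of the same vertex} (same-vertex visibilities create no arc and are not forbidden); the conclusions you draw from (A) and (B) still hold, because iterating your ``topmost blocking bar'' argument through a vertical stack shows that every bar below an $X$-bar in the stack belongs to that same $X$-vertex (and dually for bars above a $Y$-bar), which is all that the two cases of the reverse inequality actually require.
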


\begin{theorem}\label{example}
There are digraphs with bar visibility number $3$ whose underlying
undirected graph is a bar visibility graph.
\end{theorem}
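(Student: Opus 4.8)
The plan is to produce a $2$-connected bipartite planar graph $\hG$ with interval number $i(\hG)=3$, and then to take $G$ to be the orientation of $\hG$ that directs every edge from one part to the other. By Lemma~\ref{pro2} this gives $b(G)=i(\hG)=3$, while a $2$-connected planar graph has no cut-vertex and hence, by Theorem~\ref{barvG}, is a bar visibility graph, so $b(\hG)=1$. (As a bonus, $b(G)=3>2=2b(\hG)$, so this $G$ also disproves Conjecture~\ref{con2}.) Thus the whole problem reduces to building such a $\hG$.

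I would build $\hG$ by adding edges---but no new vertices---to the Scheinerman--West graph $\hG_1$ of Figure~\ref{Graph G1}, whose parts are $A=\{a_1,\ldots,a_{11}\}$ (the two centers $a_{10},a_{11}$ of $K_{2,9}$ together with the nine pendant vertices $a_1,\ldots,a_9$) and $B=\{b_1,\ldots,b_9\}$, and for which $i(\hG_1)=3$ was shown in \cite{SW1983}. Note that $\hG_1$ is itself not a bar visibility graph: its cut-vertices are exactly $b_1,\ldots,b_9$, in any plane embedding of $K_{2,9}$ every face is a $4$-cycle meeting only two of the $b_i$, and attaching the pendant edges (which are bridges) does not enlarge any face, so no embedding places all nine cut-vertices on one face. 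To destroy the cut-vertices I would add the edge $a_ib_{i+1}$ for $1\le i\le 9$, with the index on $b$ read modulo $9$ (so $b_{10}=b_1$); call the resulting graph $\hG$. Each new edge joins $A$ to $B$, so $\hG$ is still bipartite, hence triangle-free, on the same vertex set as $\hG_1$. It is still planar: in the standard plane drawing of $K_{2,9}$, the region bounded by the $4$-cycle $a_{10}b_ia_{11}b_{i+1}$ has both $b_i$ and $b_{i+1}$ on its boundary, so the pendant vertex $a_i$ can be drawn inside that region with edges to both. And $\hG$ is $2$-connected: each former cut-vertex $b_j$ now lies on the $4$-cycle $a_jb_ja_{10}b_{j+1}$ running through the core of $K_{2,9}$, and a short check (using that $a_{10}$ and $a_{11}$ are each adjacent to all of $B$, and that each $a_i$ keeps a neighbor among $b_i,b_{i+1}$ after any single deletion) shows that removing any one vertex of $\hG$ leaves it connected.

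It remains only to confirm $i(\hG)=3$. Since $\hG_1$ is a spanning subgraph of the triangle-free graph $\hG$, Lemma~\ref{pro1} gives $i(\hG)\ge i(\hG_1)=3$; since $\hG$ is planar, Theorem~\ref{interval} gives $i(\hG)\le 3$; hence $i(\hG)=3$. Taking $G$ to be the orientation of $\hG$ that directs every edge from $A$ to $B$, Lemma~\ref{pro2} yields $b(G)=i(\hG)=3$, and $b(\hG)=1$ as in the first paragraph, which proves the theorem.

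The only delicate point is the middle paragraph---simultaneously preserving planarity and eliminating every cut-vertex---and the single idea that makes it painless is to add edges rather than vertices, so that $\hG_1$ stays a spanning subgraph of $\hG$ and Lemma~\ref{pro1} automatically prevents the interval number from dropping below $3$ (Theorem~\ref{interval} handles the upper bound for free once bipartiteness and planarity survive). A quick Euler-formula check is reassuring: $\hG$ has $20$ vertices and $36$ edges, hence $18$ faces, exactly matched by the $18$ quadrilateral faces produced by the embedding described above.
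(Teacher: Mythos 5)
Your construction is exactly the paper's: adding the edges $a_ib_{i+1}$ (indices mod $9$) to the Scheinerman--West graph $\hG_1$ produces precisely the graph of Figure~\ref{Graph G2} (an $18$-cycle plus two vertices joined to all of $B$), and your argument uses the same chain Lemma~\ref{pro1}, Theorem~\ref{interval}, Lemma~\ref{pro2}, and Theorem~\ref{barvG}. The proof is correct and essentially identical to the paper's, with the planarity and $2$-connectivity checks spelled out a bit more explicitly.
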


\begin{proof}
Let $\hG$ be the graph shown in Figure~\ref{Graph G2}.
This graph is obtained from a cycle with $18$ vertices by adding two vertices
whose neighborhoods are the even-indexed vertices of the cycle.  Since $\hG$ is
a 2-connected planar graph, $b(\hG) = 1$, by Theorem~\ref{barvG}.

Since $\hG$ is triangle-free and the graph $\hG_1$ of Figure~\ref{Graph G1} is
a spanning subgraph of $\hG$, Lemma~\ref{pro1} implies $i(\hG)\ge i(\hG_1)=3$.
On the other hand, Theorem~\ref{interval} yields $i(\hat{G}) \leq 3$. Hence
$i(\hG)=3$.  Let $G$ be an orientation of $\hG$ that orients all edges from one
part to the other.  By Lemma~\ref{pro2}, $b(G) = i(\hG)=3$.

The same argument applies to such orientations of any $2$-connected planar
bipartite graph; here $G$ and $\hG$ provide just one example.
\end{proof}

\begin{figure}[htbp]
\centering
\begin{tikzpicture} [scale = 0.65,
	a/.style={circle, draw, fill=black, inner sep=1.7pt},
	b/.style={circle, draw, inner sep=1.7pt}]

   \node[a] (a10) at (0,-2.5)  {};
	\node[a] (a11) at (0,2.5)   {};
	\node[a] (a1) at (-7,0)  {};
	\node[a] (a2) at (-5,0)  {};
	\node[a] (a3) at (-3,0)   {};
	\node[a] (a4) at (-1,0)  {};
	\node[a] (a5) at (1,0)  {};
	\node[a] (a6) at (3,0)   {};
	\node[a] (a7) at (5,0)  {};
	\node[a] (a8) at (7,0)  {};
	\node[a] (a9) at (0,3.7)   {};

	\node[b] (b1) at (-8,0)    {};
	\node[b] (b2) at (-6,0)    {};
	\node[b] (b3) at (-4,0)    {};
	\node[b] (b4) at (-2,0)   {};
	\node[b] (b5) at (0,0)    {};
	\node[b] (b6) at (2,0)   {};
	\node[b] (b7) at (4,0)   {};
	\node[b] (b8) at (6,0)   {};
	\node[b] (b9) at (8,0)   {};
	
	\draw (a10) to (b1);
	\draw (a10) to (b2);
	\draw (a10) to (b3);
	\draw (a10) to (b4);
	\draw (a10) to (b5);
	\draw (a10) to (b6);
	\draw (a10) to (b7);
	\draw (a10) to (b8);
	\draw (a10) to (b9);
	
	\draw (a11) to (b1);
	\draw (a11) to (b2);
	\draw (a11) to (b3);
	\draw (a11) to (b4);
	\draw (a11) to (b5);
	\draw (a11) to (b6);
	\draw (a11) to (b7);
	\draw (a11) to (b8);
	\draw (a11) to (b9);
	
	\draw (a1) to (b1);  \draw (a1) to (b2);
	\draw (a2) to (b2);  \draw (a2) to (b3);
	\draw (a3) to (b3);  \draw (a3) to (b4);
	\draw (a4) to (b4);  \draw (a4) to (b5);
	\draw (a5) to (b5);  \draw (a5) to (b6);
	\draw (a6) to (b6);  \draw (a6) to (b7);
	\draw (a7) to (b7);  \draw (a7) to (b8);
	\draw (a8) to (b8);  \draw (a8) to (b9);
	\draw (a9) to (b9);  \draw (a9) to (b1);
\end{tikzpicture}
\caption{The graph $\hG$ for Theorem~\ref{example}.}
\label{Graph G2}
\end{figure}

Theorem~\ref{example} disproves Conjecture~\ref{con2}.  Nevertheless, results
from \cite{ABHW2013} yield upper bounds on the bar visibility number of
digraphs from the visibility number of the underlying graph.

\begin{lemma}[\cite{ABHW2013}]\label{lem}
If $G$ is a triangle-free planar digraph, then $b(G)\le3$, and this is sharp.
\end{lemma}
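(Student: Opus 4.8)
The sharpness assertion is immediate from Theorem~\ref{example}: the digraph $G$ built there has $b(G)=3$, while its underlying graph --- a cycle together with two vertices joined only to the even-indexed cycle vertices --- is triangle-free and planar. So the substance is the bound $b(G)\le3$, and the plan is to construct a $3$-bar visibility representation of $G$ directly from an interval representation of its underlying graph $\hG$, generalizing the argument behind Lemma~\ref{pro2} from bipartite graphs to all triangle-free graphs.

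By Theorem~\ref{interval}, $\hG$ has a $3$-interval representation; I would place all its intervals on one horizontal line, perturb endpoints so they are distinct, and (replacing two overlapping intervals of a common vertex by their union, which changes no adjacency) assume the intervals assigned to each vertex are pairwise disjoint. Since $\hG$ is triangle-free, no point of the line lies in intervals of three distinct vertices --- they would be pairwise adjacent --- so the representation has depth $2$. The key consequence is that the \emph{overlap graph} $\Lambda$, whose nodes are these intervals and whose edges are the overlapping pairs, is a forest: $\Lambda$ is an interval graph, hence chordal, and it is triangle-free (three pairwise overlapping intervals share a common point by Helly's property, and come from three distinct vertices, contradicting depth $2$), so $\Lambda$ has no cycle at all. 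Now transfer the orientation: each edge of $\Lambda$ joins an interval of some vertex $u$ to an interval of some vertex $v$ with $uv\in E(\hG)$, and I orient it from the $u$-side to the $v$-side exactly when $uv$ is an arc of $G$. Because $\Lambda$ is a forest, this orientation is acyclic, so it admits a topological order; let $h$ send each interval to its (distinct) position in that order, so that $h(I)<h(J)$ whenever the edge $IJ$ of $\Lambda$ is oriented from $I$ to $J$. Finally realize each interval $I$ of a vertex $v$ as a horizontal bar with the same $x$-span as $I$, placed at height $h(I)$; then $v$ has at most three bars.

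It remains to verify that this is a bar visibility representation of $G$. If two bars overlap in $x$-coordinate, their intervals overlap, so depth $2$ forbids any third bar from lying in the vertical channel between them (it would put three intervals over a common point); hence the two bars see each other, and the heights were chosen so that the lower one belongs to the tail of the arc between the two vertices. If two bars have disjoint $x$-spans they see nothing. Since adjacency of $u$ and $v$ in $\hG$ means some $u$-interval meets some $v$-interval, every arc of $G$ is produced, and non-adjacent vertices produce no visibility. The load-bearing point --- and the place where triangle-freeness is essential --- is the claim that $\Lambda$ is a forest; without depth $2$ the overlap graph could contain cycles, and then a globally consistent height assignment need not exist. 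The remaining bookkeeping, especially for bars sharing an $x$-coordinate, is routine but should be done with care.
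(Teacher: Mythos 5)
The paper itself does not prove this lemma: it is quoted from \cite{ABHW2013} and used as a black box in Theorem~\ref{theo1}, so there is no in-paper proof to compare against. Your argument is correct, and it is essentially a self-contained reconstruction of the route the cited proof takes: planarity gives $i(\hG)\le 3$ (Theorem~\ref{interval}), triangle-freeness forces every interval representation of $\hG$ to have depth $2$, and a depth-$2$ $3$-interval representation of $\hG$ converts to a $3$-bar visibility representation of $G$ --- this last step is exactly the nontrivial direction of Lemma~\ref{depth2}, which the present paper also imports from \cite{ABHW2013} and which you in effect reprove rather than cite. Your proof of that step is clean: after merging same-vertex intervals and perturbing endpoints, the overlap graph $\Lambda$ is an interval graph (hence chordal) and, by Helly's property in dimension one together with depth $2$, triangle-free, hence a forest; any orientation of a forest is acyclic, so a topological order supplies consistent heights; and depth $2$ guarantees that no third bar meets the vertical strip over $I\cap J$, so the visibilities of the bar layout are exactly the interval overlaps, oriented as in $G$. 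The sharpness claim via Theorem~\ref{example} is legitimate and non-circular, since that theorem does not depend on Lemma~\ref{lem}. Two minor caveats: your construction tacitly assumes $G$ has at most one arc per unordered pair of vertices (no digons), which matches how the lemma is applied in Theorem~\ref{theo1} but is slightly narrower than ``digraph''; and the endpoint perturbation should be done so as to preserve the intersection pattern (extend intervals that merely touch rather than separating them). Neither affects the substance.
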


\begin{theorem}[\cite{ABHW2013}]\label{planar-theo}
Let $G$ be an orientation of a planar graph $\hat{G}$.

(i) $b(G)\leq 4$.

(ii) If $\hat{G}$ is triangle-free or contains no subdivision of $K_{2,3}$,
then $b(G)\leq 3$.

(iii) If $\hat{G}$ has girth at least $6$, then $b(G)\leq 2$.
\end{theorem}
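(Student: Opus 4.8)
The plan is to treat the three parts separately, each time using one easy combining fact: \emph{if the arc set of a digraph $G$ is partitioned into subdigraphs $G_1,\dots ,G_m$, then $b(G)\le b(G_1)+\cdots +b(G_m)$}. Indeed, rescale a $b(G_i)$-bar representation of $G_i$ horizontally so that all of its bars lie in the strip $\{(x,y)\st i<x<i+1\}$; bars from different strips share no $x$-coordinate and so create no mutual visibility, and the superposition represents $G$ with at most $\sum_i b(G_i)$ bars per vertex. (This is exactly the device used in the proof of Theorem~\ref{n/4}.) Call a graph \emph{robust} if every orientation of it is a bar visibility digraph. The key structural input is that every caterpillar forest (a forest each of whose components is a caterpillar, i.e.\ has a path containing all its non-leaf vertices) is robust: for an orientation $D$ of a caterpillar forest, $D$ is acyclic, so the digraph $D'$ of Theorem~\ref{barvdig} is acyclic and hence has no consistent cycle, and the caterpillar shape keeps $D'$ planar --- the only forests having an orientation with $D'$ non-planar are those containing the spider $S_{2,2,2}$ (for such an orientation a $K_{3,3}$-subdivision appears with branch triples $\{s,t,c\}$ and the three neighbours of the centre $c$), while $S_{2,2,2}$ is precisely the minimal non-caterpillar tree. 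Writing $\mathrm{ca}(\hat G)$ for the least number of caterpillar forests that cover $E(\hat G)$, these two facts give $b(G)\le\mathrm{ca}(\hat G)$ for every orientation $G$ of $\hat G$.

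For part~(ii): if $\hat G$ is triangle-free then $G$ is a triangle-free planar digraph and $b(G)\le3$ is exactly Lemma~\ref{lem}. If instead $\hat G$ has no subdivision of $K_{2,3}$, I would first determine the blocks of $\hat G$. A $2$-connected graph that is not a cycle has an open ear decomposition with at least one ear, and a cycle together with one ear is a subdivided theta graph; pushing this through --- and observing that a subdivided $K_4$, or a $K_4$ with an extra vertex attached by two paths, already contains a $K_{2,3}$-subdivision --- shows that every block of $\hat G$ is outerplanar or is $K_4$. Every orientation of an outerplanar graph has $b\le2$ by the outerplanar bound of~\cite{ABHW2013} recalled in the introduction, and each of the four tournaments on four vertices has $b\le2$ (the transitive one by Theorem~\ref{smalln}; each of the other three decomposes into two bar visibility digraphs, so the combining fact applies). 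Gluing the block representations along the cut-vertices, as in the undirected theory, gives $b(G)\le2\le3$.

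For parts~(i) and~(iii) the plan reduces to two sparse-decomposition estimates: $\mathrm{ca}(\hat G)\le4$ for every planar graph $\hat G$, which gives (i), and $\mathrm{ca}(\hat G)\le2$ for every planar $\hat G$ of girth at least $6$, which gives (iii). For the girth-$6$ case one starts from the fact that such a graph is $2$-degenerate, fixes a $2$-degeneracy order, two-colours the at most two back-edges at each vertex to split $\hat G$ into two forests, and must then improve this to a decomposition into two \emph{caterpillar} forests, spending the girth hypothesis there. For the general planar case, $\hat G$ has arboricity at most $3$, and I would refine a forest decomposition --- guided by a canonical (shelling/Schnyder-type) vertex order of a triangulation extending $\hat G$ --- into at most four caterpillar forests. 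An alternative route to (i) avoids $\mathrm{ca}$ altogether: if every planar graph is the union of a bipartite graph $B$ and a caterpillar forest $C$, then $b(G)\le i(B)+i(C)\le3+1=4$ by Lemma~\ref{pro2} and Theorem~\ref{interval}, since $B$ is planar and $C$ --- being a disjoint union of caterpillars and hence an interval graph --- has interval number $1$.

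I expect the whole difficulty to sit in these structural decompositions of planar graphs: covering the edges of a planar graph by four caterpillar forests, covering the edges of a girth-$6$ planar graph by two caterpillar forests, and (for the alternative form of (i)) covering all the odd cycles of a planar graph by a single caterpillar forest. The remaining ingredients --- the strip-superposition bound, the robustness of caterpillar forests, the block classification in the $K_{2,3}$-free case, and the interval-number facts --- are routine once those decompositions are available. Concretely, I would attack the girth-$6$ statement by discharging on a minimal counterexample, and the general planar statement by constructing the four caterpillar forests recursively along a vertex shelling of a triangulation containing $\hat G$.
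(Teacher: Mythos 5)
This statement is not proved in the paper at all: it is quoted verbatim from~\cite{ABHW2013} and used as a black box (e.g., in the proof of Theorem~\ref{theo2}), so there is no internal proof to compare yours against and your proposal must stand on its own. Parts of your scaffolding do stand: the strip-superposition bound $b(G)\le\sum_i b(G_i)$ for an arc-partition is correct, and so is the robustness of caterpillar forests --- an orientation $D$ of a caterpillar forest is acyclic, so the auxiliary digraph $D'$ of Theorem~\ref{barvdig} has no consistent cycle, and drawing the spines on a horizontal line with all source-legs fanning down to $s$, all sink-legs fanning up to $t$, and $st$ routed around one end keeps $D'$ planar. So $b(G)\le \mathrm{ca}(\hat G)$ is a legitimate reduction.

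The gap is that the reduction is where your proof stops. For (i) and (iii) everything now hinges on two structural decomposition theorems --- every planar graph decomposes into four caterpillar forests, and every planar graph of girth at least $6$ decomposes into two --- and you offer only the words ``discharging'' and ``shelling'' in their direction. The first is a nontrivial theorem in its own right (caterpillar arboricity of planar graphs), the second I do not know to be established at all, and neither is proved here; the alternative route to (i) likewise requires that every planar graph be the union of a bipartite graph and a caterpillar forest, which is again asserted, not proved, and not obviously true. For (ii), the triangle-free half is simply a citation of Lemma~\ref{lem}, i.e., of the very statement being proved (acceptable only because the paper also imports it); in the $K_{2,3}$-free half the block classification (each block outerplanar or $K_4$) is correct, but the closing step ``gluing the block representations along the cut-vertices'' is a genuine hole: a cut vertex lying in $k$ blocks naively receives $2k$ bars, and there is no directed analogue of the cut-vertex condition of Theorem~\ref{barvG} that lets you merge $2$-bar representations of blocks for free. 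So each of the three parts still rests on an unproven decomposition or an unproven gluing argument, and the theorem is not established by this proposal.
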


\begin{theorem}\label{theo1}
If $G$ is an orientation of a triangle-free graph $\hG$, then $b(G)\le3b(\hG)$,
and this is sharp when $b(\hG)=1$.
\end{theorem}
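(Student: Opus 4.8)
The plan is to prove the bound $b(G)\le 3b(\hat G)$ by taking a $b(\hat G)$-bar visibility representation of $\hat G$ and transforming it, vertex by vertex and bar by bar, into a valid bar visibility representation of the orientation $G$ using at most a factor of $3$ more bars per vertex. The key idea is local: at each bar in the representation of $\hat G$, the ``above/below'' relation already distinguishes the two possible directions of each visibility, so a single bar of $\hat G$ naturally splits into an ``incoming part'' and an ``outgoing part'' for $G$; the extra factor comes from the need to also preserve an intermediate bar that blocks spurious visibilities.

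First I would fix a $t$-bar representation of $\hat G$ with $t=b(\hat G)$, and consider a generic horizontal sweep so that all bars have distinct heights and all visibility channels are understood. For each bar $\beta$ assigned to a vertex $v$, let $U_\beta$ be the set of bars seen by $\beta$ from above and $D_\beta$ the set seen from below. In the orientation $G$, each neighbor $u$ of $v$ is either an out-neighbor or an in-neighbor of $v$. The natural move is to replace $\beta$ by three parallel bars stacked very close together: a ``top copy'' $\beta^+$ placed slightly above the original position, a ``bottom copy'' $\beta^-$ placed slightly below, and a ``middle copy'' $\beta^0$ in between. We then trim the horizontal extents of $\beta^+$ and $\beta^-$ so that $\beta^+$ sees only those bars above it corresponding to out-neighbors of $v$ (using the already-split copies of those bars) and those below corresponding to in-neighbors, and dually for $\beta^-$, while $\beta^0$ is used to block the now-unwanted visibilities between the $\beta^\pm$ copies of different original bars of the \emph{same} vertex $v$. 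Because the gaps between the three copies are arbitrarily thin and inserted at the original height, they do not disturb any visibility between bars of other vertices.

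The main obstacle is bookkeeping the visibilities correctly when both $\beta$ and the bar it sees get split into three copies each: one must check that for an arc $xy$ of $G$ the appropriate copies see each other, and that for a non-arc $xy$ (either a non-edge of $\hat G$, or an edge oriented the ``wrong'' way) no pair of copies sees each other. Triangle-freeness of $\hat G$ is exactly what makes the blocking argument clean: when we shorten a channel by the thin inserted strip, the strip lies in no third vertex's territory, so no genuine edge is destroyed — this is the same mechanism used in Lemma~\ref{pro1}, and I would invoke that style of argument. For sharpness when $b(\hat G)=1$, we simply cite Theorem~\ref{example}, which exhibits a $2$-connected planar bipartite (hence triangle-free) graph $\hat G$ with $b(\hat G)=1$ admitting an orientation $G$ with $b(G)=3=3b(\hat G)$. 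I expect the trimming/blocking verification to be the delicate part; the height-perturbation and the counting of copies are routine once the local picture is set up.
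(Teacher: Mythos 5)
Your construction has a fatal gap: a thin local splitting of each bar into three stacked copies can never reverse the vertical sense of a visibility, and such reversals are exactly what is needed. In a $t$-bar representation of $\hat G$, an edge $uv$ may be realized only by channels in which the bar for $u$ lies \emph{below} the bar for $v$; if the orientation in $G$ is $v\to u$, the arc $vu$ requires some bar for $v$ to see some bar for $u$ \emph{above} it, and no amount of trimming copies $\beta^{+},\beta^{0},\beta^{-}$ placed at essentially the original height will produce that. Your trimming step would simply delete these wrongly-oriented visibilities, leaving the corresponding arcs of $G$ unrepresented, and you give no mechanism for re-creating them. Relatedly, your factor of $3$ is an artifact of choosing three copies; nothing in your argument ties the number $3$ to triangle-freeness, whereas in the actual bound it comes from a substantive theorem about triangle-free planar digraphs.

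The paper's proof is global rather than local: it forms the derived graph $H$ of a $t$-bar representation of $\hat G$ (one vertex per bar, edges between bars that see each other, loops omitted), observes that $H$ is planar and, because $\hat G$ is triangle-free, also triangle-free; it then orients each edge of $H$ according to the orientation in $G$ and invokes Lemma~\ref{lem} (every triangle-free planar digraph has bar visibility number at most $3$) to build an entirely new $3$-bar representation of this orientation of $H$. Relabelling the at most $3$ bars of each of the at most $t$ copies of a vertex yields a $3t$-bar representation of $G$, with no spurious arcs since distinct bars of the same vertex are nonadjacent in $H$. If you want to salvage a direct argument you would need something of this strength; the local perturbation in the spirit of Lemma~\ref{pro1} cannot supply it. Your citation of Theorem~\ref{example} for sharpness when $b(\hat G)=1$ is correct and matches the paper.
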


\begin{proof}
Let $t=b(\hat{G})$.  Because $\hat{G}$ is triangle-free, the derived graph
$H$ of a $t$-bar visibility representation of $\hat{G}$ is also triangle-free.
Orient the edges of $H$ according to the orientation in $G$, obtaining
$\widetilde{H}$.  By Lemma \ref{lem}, $\widetilde{H}$ has a $3$-bar visibility
representation, which produces a $3t$-bar visibility representation of $G$.
Hence $b(G)\le 3t$.

Sharpness is achieved by be the graph $\hG$ in Figure~\ref{Graph G2}, with
$G$ orienting all edges from one part to the other.  The proof of
Theorem~\ref{example} shows $b(G)=3b(\hat{G})$.
\end{proof}

\begin{theorem}\label{theo2}
If $G$ is an orientation of a graph $\hat{G}$, then
$b(G)\leq 4b(\hat{G})$.
%
%
\end{theorem}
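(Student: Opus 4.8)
The plan is to run the proof of Theorem~\ref{theo1} with part~(i) of Theorem~\ref{planar-theo} in place of Lemma~\ref{lem}: since part~(i) bounds the bar visibility number of an orientation of an \emph{arbitrary} planar graph, no triangle-freeness hypothesis is needed, at the cost of a factor $4$ rather than $3$.

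Set $t=b(\hat G)$ and fix a $t$-bar visibility representation of $\hat G$. Let $H$ be its derived graph (Definition~\ref{derived}); it is a plane graph, hence planar, and each vertex $v$ of $\hat G$ corresponds to at most $t$ vertices of $H$, namely the bars assigned to $v$. The first step is to delete from $H$ every edge joining two bars for the same vertex of $\hat G$, obtaining a planar graph $H'$; deleting edges preserves planarity, and (as we check below) this pruning is exactly what makes the final representation loop-free. Every edge of $H'$ joins a bar for some vertex $u$ to a bar for some vertex $v$ with $u\neq v$ and $uv\in E(\hat G)$, so $G$ contains exactly one of the arcs $uv$ and $vu$; orient that edge in the same direction. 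The result is a digraph $\widetilde H$ that is an orientation of the planar graph $H'$, so Theorem~\ref{planar-theo}(i) gives $b(\widetilde H)\le 4$.

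Fix a $4$-bar visibility representation of $\widetilde H$ and assemble a representation of $G$ from it: to each vertex $v$ of $G$ assign all the bars that this representation gives to the vertices of $\widetilde H$ that are bars for $v$; then $v$ receives at most $4t$ bars. It remains to verify that this represents $G$ exactly, which is the only step needing genuine care. A bar for $u$ sees a bar for $v$ above it if and only if $\widetilde H$ contains an arc from some vertex $a$ that is a bar for $u$ to some vertex $b$ that is a bar for $v$. If $u\neq v$, such an arc exists if and only if $ab\in E(H')$, i.e.\ if and only if $a$ and $b$ see each other in the representation of $\hat G$, i.e.\ if and only if $uv\in E(\hat G)$; and when it exists, it points from $a$ to $b$ precisely when the arc of $G$ on $\{u,v\}$ is $uv$. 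Hence a bar for $u$ sees a bar for $v$ above it if and only if $uv$ is an arc of $G$. If $u=v$, no such arc exists, because $H'$ has no edge joining two bars for $u$ and $\widetilde H$ is loopless; so no bar for $u$ sees another bar for $u$ above it, as needed since $G$ has no loops. Therefore $b(G)\le 4t=4b(\hat G)$.

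The main obstacle is this final verification, and within it the point that a bar visibility representation of a loopless digraph may not contain a visibility between two bars for the same vertex; pruning the intra-vertex edges of the derived graph before orienting is what guarantees this. Everything else is a direct transcription of the proof of Theorem~\ref{theo1}, so I would keep the write-up short.
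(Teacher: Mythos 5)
Your proof is correct and is essentially the paper's own argument: the paper proves Theorem~\ref{theo2} in one line by rerunning the proof of Theorem~\ref{theo1} with Theorem~\ref{planar-theo}(i) in place of Lemma~\ref{lem}, exactly as you do. Your explicit pruning of the derived-graph edges joining two bars for the same vertex is a detail the paper absorbs into the ``omitting loops'' clause of Definition~\ref{derived}, so it is a clarification rather than a departure.
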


\begin{proof}
As in the proof of Theorem \ref{theo1}, the claim follows from
Theorem~\ref{planar-theo}.
\end{proof}

Theorems~\ref{theo1} and~\ref{theo2} rely on the properties in
Lemma~\ref{lem} of being planar and triangle-free.  We do not obtain
similar conclusions for graphs lacking subdivisions of $K_{2,3}$ or having
girth at least $6$, because the planar graph $H$ obtain from the $t$-bar
visibility representation of such a digraph may have a subdivision of
$K_{2,3}$ or a $4$-cycle, respectively.

\section{NP-Completeness}

In this section, we show that recognition of digraphs with bar visibility
number $2$ is NP-complete, by reduction from the Hamiltonian cycle problem in
$3$-regular triangle-free graphs.

The digraphs with bar visibility number $1$ are the bar visibility digraphs,
characterized in Theorem~\ref{barvdig} by whether a certain auxiliary digraph
is planar and has no consistent cycle.  There are linear time algorithms for
testing planarity (see Section 2.7 in \cite{Mohar}).  Also, a digraph has no
consistent cycle if and only if it admits a topological sort, and there exist
polynomial-time algorithms for finding a topological sort (see~\cite{Kahn} for
example).  Hence there is a polynomial-time algorithm for recognition of
digraphs with bar visibility number $1$.

In order to study the recognition problem for bar visibility number $2$,
we define additional aspects of interval representations of graphs, which were
introduced in the previous section.
A $t$-interval representation of a graph $H$ is a {\it displayed representation}
if for each vertex, some assigned interval contains an open interval not
intersecting any other interval.  If the union of a set of intervals in a
$t$-interval representation is a single interval, then we say that these
intervals {\it appear contiguously}.  A graph $H$ is {\it $t$-interval tight}
if $i(H)=t$ and every $t$-interval representation of $H$ assigns $t$ disjoint
intervals to each vertex.

\begin{lemma}[\cite{West84}]\label{Kinterval}
The graph $K_{t^2+t-1,t+1}$ is $t$-interval tight.  If $K_{t^2+t-1,t+1}$ is an
induced subgraph of a graph $H$, then in every $t$-interval representation of
$H$ the intervals for vertices of $K_{t^2+t-1,t+1}$ appear contiguously.
If $u$ and $v$ are any specified vertices from opposite parts of $K_{m,n}$,
then $K_{m,n}$ has a displayed $i(K_{m,n})$-interval representation
in which $u$ and $v$ are assigned the leftmost and rightmost intervals in the
representation, respectively.
\end{lemma}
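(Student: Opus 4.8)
The plan is to derive all three assertions from a single counting inequality: \emph{every interval representation of $K_{m,n}$ (with any number of intervals per vertex) uses at least $mn+1$ intervals, and this forces rigid structure when met with equality.} Since this gives $i(K_{m,n})=\CL{(mn+1)/(m+n)}$, and for $m=t^2+t-1$, $n=t+1$ one checks $(m+n)t=t(t^2+2t)=t^2(t+2)=mn+1$, the value is $i(K_{t^2+t-1,t+1})=t$ with \emph{no slack at all} in an optimal representation — and that is precisely what yields tightness and forced contiguity. The remaining ingredient is an explicit balanced representation realizing the bound, which I would also use to place prescribed vertices at the two ends.

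First I would prove the inequality. Start from any interval representation of $K_{m,n}$ with parts $A$ and $B$, and clean it up: repeatedly replace two overlapping or touching intervals of one vertex by their union (this changes neither the represented graph nor any other vertex's intervals, as the union is again an interval covering the same point set), and delete any interval meeting no interval of any other vertex (it witnesses no edge). Let $\Gamma$ be the intersection graph of the $N$ surviving intervals; then $N$ is at most the original number of intervals, $\Gamma$ is an interval graph and hence chordal, and $\Gamma$ has no isolated vertices. The key observation is that a clique of $\Gamma$ is a set of pairwise intersecting intervals, so it contains at most one interval from a vertex of $A$ — two from distinct $A$-vertices would force a forbidden edge inside $A$, and two from the same $A$-vertex were merged away — and likewise at most one from $B$. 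Thus each \emph{maximal} clique of $\Gamma$ represents at most one edge of $K_{m,n}$, and assigning to each edge $ab$ a maximal clique containing an $a$-interval meeting a $b$-interval is injective, so $mn$ is at most the number of maximal cliques of $\Gamma$. A chordal graph on $p$ vertices has at most $p$ maximal cliques with equality only if it is edgeless (an easy induction on a simplicial vertex), so each component of $\Gamma$ (connected, at least two vertices, at least one edge) contributes at most one fewer maximal clique than its vertex count; summing, the number of maximal cliques is at most $N-(\text{number of components})\le N-1$. Hence $mn\le N-1$, so the original representation uses at least $mn+1$ intervals. With the construction below this gives $i(K_{m,n})=\CL{(mn+1)/(m+n)}$, in particular $i(K_{t^2+t-1,t+1})=t$.

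Tightness and contiguity now come from tracking equality. In any $t$-interval representation of $K=K_{t^2+t-1,t+1}$ the number of intervals is at most $t(m+n)=mn+1$ but at least $mn+1$, so it equals $mn+1$; since there are $m+n$ vertices each getting at most $t$ intervals, each gets exactly $t$, and they are pairwise disjoint (else a merge drops the count below $mn+1$) — so $K$ is $t$-interval tight. If $K$ is an induced subgraph of $H$, restricting a $t$-interval representation of $H$ to $V(K)$ gives a $t$-interval representation of $K$ (no new edges, as $K$ is induced), which is therefore extremal; then in the chain $mn\le(\text{maximal cliques of }\Gamma_K)\le N-c\le N-1$ with $N=mn+1$ we must have $c=1$, so $\Gamma_K$ is connected. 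A finite family of intervals with connected intersection graph has an interval as its union, so the intervals assigned to the vertices of $K$ form a single interval, i.e., they appear contiguously.

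Finally, the last assertion asks for an optimal representation that is displayed and has a prescribed vertex at each end. Here I would invoke (or reconstruct) the balanced extremal construction for $K_{m,n}$: split each part and assign intervals block by block so that every vertex uses at most $i(K_{m,n})$ intervals, all $mn$ edges are realized, and a tiny private sliver can be left at each vertex, making the representation displayed. Because $t(m+n)\ge mn+1$ there is room to spare, and because the construction is symmetric under relabeling within each part, one can arrange for the globally leftmost interval to be one assigned to the prescribed $u\in A$ and the globally rightmost to be one assigned to the prescribed $v\in B$. I expect this balancing step to be the real obstacle: a single long interval cannot serve two vertices of the same part, so the vertices of each part must be divided and nested carefully across several blocks rather than laid out along one line — this is exactly the technical content of the construction in \cite{West84}. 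By contrast, the lower-bound and rigidity half above is short once the maximal-clique observation is isolated.
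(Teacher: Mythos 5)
The paper never proves this lemma internally; it is imported verbatim from \cite{West84}, so there is no in-paper argument to compare against. Your reconstruction of the lower-bound and rigidity half is correct, and it is essentially the classical counting argument for $i(K_{m,n})$: after merging touching intervals of a single vertex and discarding useless ones, every maximal clique of the derived interval graph contains at most one $A$-interval and one $B$-interval, so the $mn$ edges inject into the maximal cliques, and the per-component clique bound for chordal graphs gives at least $mn+1$ intervals in total. Your arithmetic $t(m+n)=t^3+2t^2=mn+1$ for $m=t^2+t-1$, $n=t+1$ is right, so equality leaves no slack; the deduction that no merge could have occurred (hence each vertex gets exactly $t$ pairwise disjoint intervals) and that the component count must be $1$ (hence the union of the intervals for $V(K_{t^2+t-1,t+1})$ is a single interval, which is precisely the paper's definition of appearing contiguously) is sound, including the step where an induced copy inside $H$ inherits a valid, hence extremal, $t$-interval representation.

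The one genuine incompleteness is that everything requiring a construction is asserted rather than proved: the upper bound $i(K_{t^2+t-1,t+1})\le t$, without which even the first assertion (which claims $i=t$, not merely $i\ge t$) is unfinished, and the entire third sentence about a displayed optimal representation of $K_{m,n}$ with prescribed leftmost and rightmost vertices from opposite parts. You explicitly defer both to the balanced construction of \cite{West84}; given that the paper itself only cites that source, this is a defensible stopping point, but be aware that your symmetry remark (relabel within each part to put $u$ leftmost and $v$ rightmost) only works once one has exhibited a displayed optimal representation in which the globally leftmost interval belongs to some $A$-vertex and the globally rightmost to some $B$-vertex --- and producing that representation is exactly the technical content you skipped. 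If you want the write-up to stand alone, that construction (or at least a precise citation of the statement you are using from \cite{West84}) must be added; the rest needs no change.
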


We introduce concepts for $t$-bar visibility representations of digraphs
analogous to those for $t$-interval representations of graphs.
A $t$-bar visibility representation of a digraph $G$ has {\it depth-$2$} if
every channel intersects at most two bars in the representation.  The
representation is {\it displayed} if for each $v\in V(G)$ there is an unbounded
channel that intersects some bar for $v$ and no other bar.  If the derived
graph of the representation is connected, then we say that the bars in the
representation {\it appear contiguously}.  A digraph $G$ is {\it $t$-bar tight}
if $b(G)=t$ and in every $t$-bar visibility representation each vertex is
assigned $t$ bars.

\begin{lemma}[\cite{ABHW2013}]\label{depth2}
A digraph $G$ has a depth-$2$ $t$-bar visibility representation if and only
if its underlying graph $\hat{G}$ has a depth-$2$ $t$-interval representation.
\end{lemma}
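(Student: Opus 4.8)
The plan is to prove both directions by identifying each bar in a depth-$2$ representation with its orthogonal projection onto the $x$-axis, using the fact that ``depth $2$'' means exactly that no vertical line meets three or more bars; equivalently, every point of the real line lies in the $x$-projections of at most two bars, hence of bars belonging to at most two vertices. A preliminary normalization I will make in both directions is general position: after a harmless perturbation no two bars (resp.\ intervals) share an endpoint, so that two bars overlap horizontally if and only if they overlap in an interval of positive length; this perturbation can be chosen to separate bars that merely touch, so that it neither creates nor destroys any visibility.

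For the ``only if'' direction I will start from a depth-$2$ $t$-bar visibility representation of $G$ and assign to each vertex $v$ the family of $x$-projections of its bars. This gives at most $t$ intervals per vertex, and by the observation above it has depth $2$ when viewed as an interval representation. It then remains to check that the represented graph is $\hat G$: if $uv$ is an arc of $G$ then some bar of $u$ sees some bar of $v$, so these two bars overlap horizontally in a positive-length interval and the corresponding intervals of $u$ and $v$ meet; conversely, if an interval of $u$ meets an interval of $v$, the two bars overlap in a positive-length interval over which, by depth $2$, they are the only bars, so the lower one sees the upper one and $uv \in E(\hat G)$. Hence this is a depth-$2$ $t$-interval representation of $\hat G$.

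For the ``if'' direction I will fix the orientation $G$ of $\hat G$ together with a depth-$2$ $t$-interval representation of $\hat G$, and first merge any two intersecting intervals of the same vertex into their union; this changes neither the represented graph nor the depth, keeps at most $t$ intervals per vertex, and makes the intervals of each vertex pairwise disjoint, so that at most two intervals cover any point. I will then sweep the line from left to right, turning each interval into a bar with the same $x$-projection and assigning heights greedily: when an interval $J$ of a vertex $v$ opens, at most one other bar is active, and if that bar belongs to $u$ I place $J$'s bar just above it when $uv$ is an arc of $G$ and just below it when $vu$ is an arc (one of these holds because $J$ now overlaps the active interval, forcing $uv\in E(\hat G)$); if no bar is active I give $J$ a fresh height, and bars are deleted when their intervals close. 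At every instant at most two bars are active, they overlap horizontally with nothing between them, and the induced visibility has exactly the orientation prescribed by $G$ because, of any two simultaneously active bars, the later-opened one was positioned directly relative to the earlier one. Non-adjacent vertices receive disjoint intervals, hence bars with disjoint $x$-projections and no visibility, so the result is a depth-$2$ $t$-bar visibility representation of $G$.

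The point that needs the most care is orientation-consistency in the reverse direction when a single pair $u,v$ overlaps in several disjoint intervals (because one or both vertices owns more than one interval): one must rule out placing a bar of $u$ above a bar of $v$ in one region and below it in another, which would force both arcs $uv$ and $vu$. The left-to-right sweep handles this automatically, since each bar is positioned relative to at most one earlier bar and the side it is placed on is determined solely by the fixed orientation of the single arc between $u$ and $v$, so every overlap region between $u$ and $v$ inherits that same orientation. The general-position reduction and the interval-merging step are the remaining technicalities, and both are routine.
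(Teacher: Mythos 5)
Your proof is correct. Note that the paper itself offers no proof of this lemma---it is quoted from \cite{ABHW2013}---so there is nothing to compare against line by line; but your argument is the natural one and matches the idea the paper itself sketches just before Lemma~\ref{pro2} (``processed from left to right, shifting intervals up or down as needed in becoming bars''). Both directions check out: projecting bars to the $x$-axis preserves depth~$2$ and, crucially, depth~$2$ guarantees that every positive-length horizontal overlap is an unblocked visibility, so the projected intervals represent exactly $\hat{G}$; and in the reverse direction your left-to-right sweep correctly exploits the fact that when an interval opens at most one other vertex is active, so the new bar's side is forced by the single arc of $G$ between the two vertices, which is what makes the orientation consistent across multiple disjoint overlap regions of the same pair. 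The general-position and interval-merging normalizations you flag are indeed the only remaining technicalities, and they are routine.
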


Let $\tK_{m,n}$ denote an orientation of $K_{m,n}$ that orients all its
edges from one part to the other.  Both such orientations have the same
bar visibility number.

\begin{lemma}\label{Kvisibility}
The digraph $\tK_{t^2+t-1,t+1}$ is $t$-bar tight. If
$\tK_{t^2+t-1,t+1}$ is an induced subgraph of a digraph $G$, then in any
$t$-bar visibility representation of $G$ the bars for vertices of
$K_{t^2+t-1,t+1}$ appear contiguously. Furthermore, if $u$ and $v$ are any
specified vertices from opposite parts of $\tK_{m,n}$, then
$\tK_{m,n}$ has a displayed $b(K_{m,n})$-bar visibility representation in
which $u$ and $v$ are assigned the leftmost and rightmost bars in the
representation, respectively.
\end{lemma}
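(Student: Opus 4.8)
The plan is to follow the pattern of Lemma~\ref{Kinterval} for intervals, transporting each of its three conclusions across the bridge between bar visibility representations and interval representations. Two facts drive everything. First, \emph{every} $t$-bar visibility representation of $\tK_{m,n}$ is automatically a depth-$2$ representation: in $\tK_{m,n}$ each vertex of the first part is a source and each of the second part is a sink, so if some channel met three bars $b_1,b_2,b_3$ in bottom-to-top order (after shrinking the channel if necessary so that $b_1,b_2,b_3$ are consecutive with nothing between them), then $b_1$ would see $b_2$ and $b_2$ would see $b_3$, producing both an arc into and an arc out of the vertex of $b_2$, which is impossible. Second, Lemma~\ref{depth2} together with its underlying construction -- replace each bar by its horizontal extent, noting that in a depth-$2$ representation two bars see each other exactly when their extents overlap -- converts a depth-$2$ $t$-bar visibility representation of $\tK_{m,n}$ into a depth-$2$ $t$-interval representation of $K_{m,n}$ that assigns each vertex exactly as many intervals as it had bars; Lemma~\ref{pro2} performs the reverse passage for the orientation $\tK_{m,n}$.

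For the first assertion I would combine these facts with $m=t^2+t-1$ and $n=t+1$. Any $t$-bar visibility representation of $\tK_{t^2+t-1,t+1}$ is depth-$2$, hence yields a depth-$2$ $t$-interval representation of $K_{t^2+t-1,t+1}$; since $K_{t^2+t-1,t+1}$ is $t$-interval tight by Lemma~\ref{Kinterval}, that representation gives $t$ pairwise disjoint intervals to every vertex, so the original gave $t$ bars to every vertex. The same argument applied to a hypothetical representation with fewer than $t$ bars at some vertex contradicts $i(K_{t^2+t-1,t+1})=t$, so $b(\tK_{t^2+t-1,t+1})\ge t$; with $b(\tK_{t^2+t-1,t+1})\le i(K_{t^2+t-1,t+1})=t$ from Lemma~\ref{pro2} this gives equality. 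Hence $\tK_{t^2+t-1,t+1}$ is $t$-bar tight.

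For the second assertion, fix a $t$-bar visibility representation of a digraph $G$ containing $\tK_{t^2+t-1,t+1}$ as an induced subdigraph. Because the subdigraph is induced, two bars belonging to its vertices see each other in this representation exactly when the corresponding ordered pair is one of its arcs; in particular no two bars of one vertex, and no two bars of vertices in the same part, see each other. I would first show that the horizontal extents of the bars of $\tK_{t^2+t-1,t+1}$ form a single interval: a vertical line crossing none of these bars but having some of them on each side would prevent bars on opposite sides from seeing each other, splitting the realized arcs into two parts with no arc between and contradicting the connectedness of $K_{t^2+t-1,t+1}$, all of whose arcs are realized among these bars. I would then promote this to the statement that the derived graph on these bars is connected -- so that the bars \emph{appear contiguously} -- by again using that every vertex is a source or a sink to argue that the sub-representation on the $\tK_{t^2+t-1,t+1}$-bars behaves like a depth-$2$ representation faithful to $\tK_{t^2+t-1,t+1}$, converting it via Lemma~\ref{depth2} to a $t$-interval representation of $K_{t^2+t-1,t+1}$, and applying the contiguity clause of Lemma~\ref{Kinterval}. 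The main obstacle is exactly this faithfulness: a bar of $G$ lying outside $\tK_{t^2+t-1,t+1}$ could in principle sit between two of its bars inside a channel, so that the horizontal projection of the sub-representation would record a spurious visibility between two vertices in the same part; ruling this out -- using the source/sink structure together with the tightness already established, which forces every vertex to use all $t$ of its bars on the opposite part -- is the step that needs the most care.

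For the ``furthermore'' clause I would start from the displayed $i(K_{m,n})$-interval representation of $K_{m,n}$ supplied by Lemma~\ref{Kinterval}, with the prescribed $u$ and $v$ assigned the leftmost and rightmost intervals, and apply to it the left-to-right conversion from the proof of Lemma~\ref{pro2} for the orientation $\tK_{m,n}$. That sweep preserves the left-to-right order of interval endpoints, so $u$ receives the leftmost bar and $v$ the rightmost; a point of some interval of a vertex that lies in no other interval becomes a horizontal position crossed by that vertex's bar alone, yielding an unbounded channel that witnesses the displayed property; and by Lemma~\ref{pro2} the number of bars per vertex equals $i(K_{m,n})$, matching the count in the statement. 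This completes the plan.
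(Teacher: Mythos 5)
Your proposal follows essentially the same route as the paper's proof: note that every bar visibility representation of $\tK_{m,n}$ has depth $2$ because each vertex is a source or a sink, that every interval representation of the triangle-free $K_{m,n}$ has depth $2$, and then transfer all three conclusions of Lemma~\ref{Kinterval} across the bridge of Lemma~\ref{depth2} (with Lemma~\ref{pro2} and symmetry for the displayed/endpoint clause). The paper's own proof is a four-line sketch of exactly this argument---it does not even mention the contiguity-within-$G$ subtlety you carefully flag---so your write-up is, if anything, the more thorough of the two.
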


\begin{proof}
Since each vertex in $\tK_{m,n}$ is a source or a sink, all
$b(\tK_{m,n})$-bar visibility representations have depth $2$.
Since $K_{m,n}$ is triangle-free, every $i(K_{m,n})$-interval representation
has depth $2$.  By Lemma~\ref{depth2}, $b(\tK_{m,n})=i(K_{m,n})$.
By Lemma~\ref{Kinterval}, $\tK_{t^2+t-1,t+1}$ is $t$-bar tight.
The claim about specified vertices follows by symmetry.
\end{proof}

Our reduction involves transforming a $3$-regular triangle-free graph $H$ into
a digraph $G$ such that $H$ has a Hamiltonian cycle if and only if $b(G)=2$.

\begin{definition}
Given a $3$-regular triangle-free graph $H$, define a {\it test digraph}
$f(H)$ as follows.  Begin with an arbitrary orientation of $H$.
Add three copies of $\tK_{5,3}$, denoted $H_1$, $H_2$, and $H_3$.
Choose sinks $s_1\in V(H_1)$ and $s_2\in V(H_2)$, choose sources
$t_2\in V(H_2)$ and $t_3\in V(H_3)$, and add the arcs $s_1t_2$ and $s_2t_3$.
For each vertex $v\in V(H)$, add a copy $M_v$ of $\tK_{5,3}$ and an arc
$vv'$ for one vertex $v'\in V(M_v)$.  Also add the arcs $s_1v$ and $vt_2$.
Finally, for one special vertex $z\in V(H)$, add an arc from $z$ to each
vertex in $H_2$ and $H_3$ (we already have $zt_2$; no need for an extra copy).
Also add $s_1x$ and $xt_2$ for each $x\in V(M_z)$.
\end{definition}


\begin{figure}[h]
\begin{center}
\begin{tikzpicture}
[p/.style={circle,draw=black,inner sep=1.4pt},>=triangle 45,q/.style={circle,draw=black,fill=black,inner sep=0.5pt}]
\draw (0,2) rectangle (3,4);\draw (5,2) rectangle (8,4);\draw (10,2) rectangle (13,4);
\node  at (1.5,4.3){$H_1$};\node  at (6.5,4.3){$H_2$};\node  at (11.5,4.3){$H_3$};
\node (s1) at (2,3) [p] {};\node  at (2,3.3){$s_1$};
\node (t2) at (6,3) [p] {};\node  at (6,3.3){$t_2$};
\node (s2) at (7,3) [p] {};\node  at (7,3.3){$s_2$};
\node (t3) at (11,3) [p] {};\node  at (11,3.3){$t_3$};
\draw (s1)--(t2);\draw (s2)--(t3);\node(s1t2) at ($(s1)!.6!(t2)$) {};\draw[->](s1)--(s1t2);
\node(s2t3) at ($(s2)!.6!(t3)$) {};\draw[->](s2)--(s2t3);

\draw (1,-2) rectangle (7,0);\node  at (0.5,-1){$H$};
\node (vn-1) at (2,-1) [p] {};\node  at (1.5,-1){$v_{n-1}$};
\node (v1) at (5,-1) [p] {};\node  at (4.7,-1){$v_1$};
\node (v0) at (6,-1) [p] {};\node  at (5.7,-1){$z$};
\node  at (3,-1) [q] {};\node  at (3.5,-1) [q] {};\node  at (4,-1) [q] {};

\draw (1.5,-5) rectangle (2.5,-3);\node  at (2,-5.3){$M_{v_{n-1}}$};
\node (vxn-1) at (2,-3.8) [p] {};\node  at (2,-4.3){$v'_{n-1}$};
\draw (4.5,-5) rectangle (5.5,-3);\node  at (5,-5.3){$M_{v_1}$};
\node (vx1) at (5,-3.8) [p] {};\node  at (5,-4.3){$v'_{1}$};
\node  at (3,-4) [q] {};\node  at (3.5,-4) [q] {};\node  at (4,-4) [q] {};

\draw (9,-2) rectangle (10,0);\node  at (10.7,-1){$M_z$};
\node (vx0) at (9.3,-1) [p] {};\node  at (9.7,-1){$z'$};

\draw (v0)--(vx0);\node(vx0m) at ($(v0)!.7!(vx0)$) {};\draw[->](v0)--(vx0m);
\draw (v1)--(vx1);\node(vx1m) at ($(v1)!.7!(vx1)$) {};\draw[->](v1)--(vx1m);
\draw (vn-1)--(vxn-1);\node(vxn-1m) at ($(vn-1)!.7!(vxn-1)$) {};\draw[->](vn-1)--(vxn-1m);
\draw (s1)--(v0);\node(s1v0) at ($(s1)!.7!(v0)$) {};\draw[->](s1)--(s1v0);
\draw (s1)--(v1);\node(s1v1) at ($(s1)!.7!(v1)$) {};\draw[->](s1)--(s1v1);
\draw (s1)--(vn-1);\node(s1vn-1) at ($(s1)!.7!(vn-1)$) {};\draw[->](s1)--(s1vn-1);
\draw (v1)--(t2);\node(v1t2) at ($(v1)!.6!(t2)$) {};\draw[->](v1)--(v1t2);
\draw (vn-1)--(t2);\node(vn-1t2) at ($(vn-1)!.65!(t2)$) {};\draw[->](vn-1)--(vn-1t2);
\draw (s1)--(9.5,0);\node(s10) at ($(s1)!.7!(9.5,0)$) {};\draw[->](s1)--(s10);
\draw (9.5,0)--(t2);\node(0t2) at ($(9.5,0)!.6!(t2)$) {};\draw[->](9.5,0)--(0t2);
\draw (v0)--(6.5,2);\node(v02) at ($(v0)!.7!(6.5,2)$) {};\draw[->](v0)--(v02);
\draw (v0)--(11.5,2);\node(v03) at ($(v0)!.7!(11.5,2)$) {};\draw[->](v0)--(v03);

\end{tikzpicture}
\end{center}
\caption{The graph $f(H)$.}
\label{TGraph G}
\end{figure}
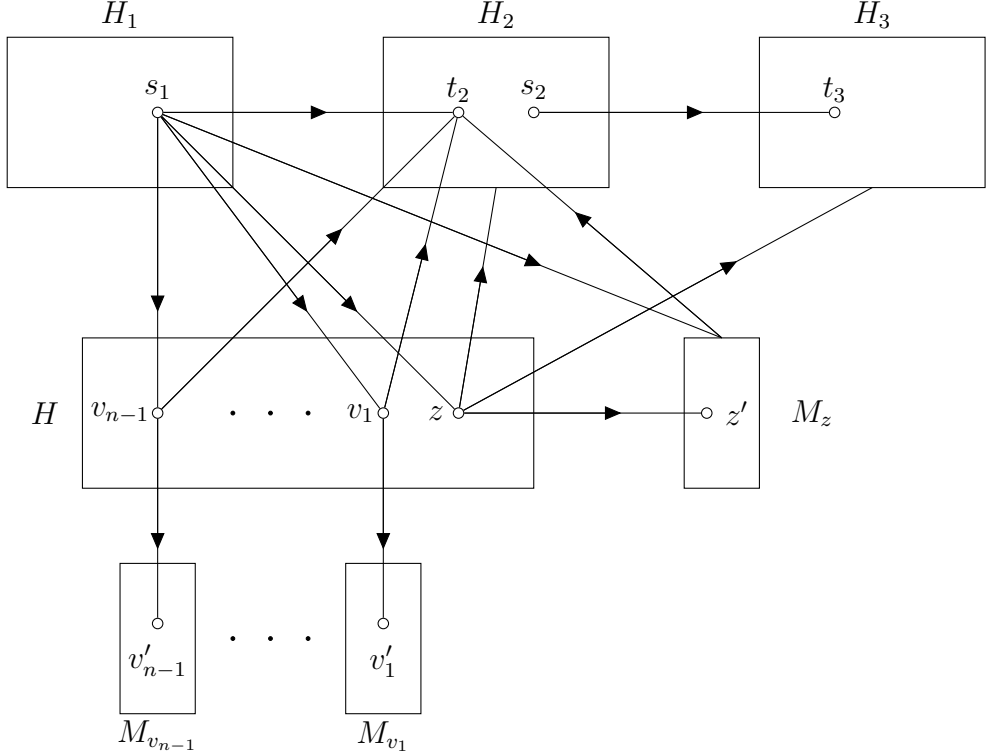

Figure~\ref{TGraph G} illustrates the test digraph $f(H)$ obtained from $H$.
If $H$ has $n$ vertices, then $H$ has $3n/2$ edges, and $f(H)$ has $9n+24$
vertices and $39n/2+78$ edges.  The test digraph can be produced in time
polynomial in the size of $H$.

For two disjoint subgraphs $D_1$ and $D_2$ of a (di)graph $D$, we denote by
$D_1+D_2$ the subgraph of $D$ induced by $V(D_1)\cup V(D_2)$.  It contains
$D_1$, $D_2$, and the arcs with endpoints in $V(D_1)$ and $V(D_2)$.

\begin{lemma}\label{HT}
If $H$ is a $3$-regular triangle-free Hamiltonian graph, then $b(f(H))=2$.
\end{lemma}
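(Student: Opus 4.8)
The plan is to build an explicit $2$-bar visibility representation of $f(H)$ from a Hamiltonian cycle $C$ of $H$, then invoke Lemma~\ref{Kvisibility} and Lemma~\ref{depth2} to argue no $1$-bar representation exists. First I would record the lower bound $b(f(H))\ge 2$: the digraph $f(H)$ contains an induced copy of $\tK_{5,3}$ (indeed many), and $\tK_{5,3}$ is $2$-bar tight by Lemma~\ref{Kvisibility} with $t=2$ (since $t^2+t-1=5$ and $t+1=3$), so no $1$-bar representation is possible. It remains to produce a $2$-bar representation.

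For the upper bound, the key observation is that each $M_v$, $H_1$, $H_2$, $H_3$ is a copy of $\tK_{5,3}$ with $b(\tK_{5,3})=i(K_{5,3})=2$, and by the last sentence of Lemma~\ref{Kvisibility} each such block has a displayed $2$-bar representation in which two prescribed vertices from opposite parts receive the leftmost and rightmost bars. I would lay these blocks out left-to-right along the horizontal axis, using the displayed channels on the outside of each block to ``dock'' the incident arcs: for $H_1$ put $s_1$ at the right end, for $H_2$ put $t_2$ at the left and $s_2$ at the right, for $H_3$ put $t_3$ at the left, and for each $M_v$ put $v'$ at the appropriate boundary. The single long bar for $s_1$ can be run underneath the whole strip so that it sees every vertex it must dominate ($t_2$, every $v\in V(H)$, every $x\in V(M_z)$), and the single long bar for $t_2$ can be run above so that it is seen by all the required vertices; the arcs $s_1t_2$, $s_2t_3$ are then immediate, as are the arcs from $z$ into $H_2$ and $H_3$ by routing an appropriate bar of $z$ over those blocks. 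The delicate part is realizing the orientation of $H$ itself on the vertices $\VEC v1n$ together with the pendant arcs $vv'$ using only one \emph{additional} bar per vertex of $H$: here is where the Hamiltonian cycle enters. Order $\VEC v1n$ along the cycle $C$; then the edges of $C$ form a spanning path (after deleting one cycle edge) plus one extra edge, and every orientation of a path is a bar visibility digraph, so the $n$ path-edges of $C$ can be drawn with one bar per vertex along the $H$-strip; the remaining $3n/2-n = n/2$ chord edges of $H$, together with the single leftover cycle edge and the pendant arcs $vv'$, are absorbed into the \emph{second} bar of each vertex $v$ — each $v$ has degree $3$ in $H$ plus the pendant, so outside the path it has at most two more incident edges, which a second bar placed just above the strip can realize planarly since $H$ is triangle-free and these leftover edges form a graph of max degree $\le 2$, i.e. a union of paths and cycles, hence a bar visibility digraph after orienting.

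I expect the main obstacle to be the bookkeeping that shows all these pieces fit together in one plane without creating \emph{unwanted} visibilities — in particular, that the long bars for $s_1$ and $t_2$ passing beneath and above the entire construction do not accidentally see each other or see vertices they must not dominate, and that the two bars assigned to each $v\in V(H)$ do not collectively see too much. The clean way to control this is the depth-$2$ framework: I would instead build a depth-$2$ $2$-interval representation of the underlying graph $\hat{f(H)}$ and apply Lemma~\ref{depth2}. Using the displayed $2$-interval representations of each $K_{5,3}$-block from Lemma~\ref{Kinterval}, concatenated left to right with the prescribed boundary vertices, and placing a single interval for $s_1$ (resp.\ $t_2$) spanning exactly the union of the tiny ``private'' sub-intervals it needs to touch, one gets a depth-$2$ $2$-interval representation of $\hat{f(H)}$ \emph{provided} the graph induced on $V(H)$ together with the pendant vertices admits a depth-$2$ $2$-interval representation compatible with the prescribed docking points; and that is exactly what the Hamiltonian path through $H$ (the spanning path of $C$) gives — one interval per vertex along the path plus one more interval per vertex handling the $O(n)$ remaining edges, with depth never exceeding $2$ because $H$ is triangle-free. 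Converting this interval representation to bars via the left-to-right sweep of Lemma~\ref{pro2}/Lemma~\ref{depth2} and respecting the given orientation of $f(H)$ then yields $b(f(H))\le 2$, completing the proof.
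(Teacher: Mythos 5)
Your lower bound is fine, and your overall plan (staircase of $H_1,H_2,H_3$ with $s_1,t_2,s_2,t_3$ on prescribed sides, a path/cycle representation of $H$ sandwiched between the long bars for $s_1$ and $t_2$, and a long bar for $z$ reaching $H_2$ and $H_3$) matches the paper's Steps 1--3. But there is a genuine gap in how you handle the gadgets $M_v$ for $v\ne z$ and the matching edges. You propose to put the second bar of each $v$ ``just above the strip'' and let it absorb the pendant arc $vv'$ and the leftover edges. The arc $vv'$ forces that second bar to see a bar of $M_v$, and $M_v$ is a copy of $\tK_{5,3}$ needing two bars per vertex whose bars must appear contiguously; if $M_v$ sits above the strip, i.e.\ inside the horizontal extent of the region between the long $s_1$-bar and the long $t_2$-bar, its vertices will either block the required visibilities $s_1v$ and $vt_2$ for other vertices or themselves see $s_1$, $t_2$, or other $H$-vertices, all of which are unwanted arcs (only $M_z$ is adjacent to $s_1$ and $t_2$). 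The paper's key device, which you gesture at but do not use, is the decomposition of the $3$-regular Hamiltonian graph $H$ into the cycle $C$ plus a perfect matching: for each matching edge $uw$ one builds a self-contained displayed $2$-bar gadget for $M_u+M_w+uw$ (one bar each for $u$ and $w$, two for every vertex of $M_u\cup M_w$) and places it entirely \emph{outside} the horizontal extent of the main picture, so the second bars of $u$ and $w$ and the whole blocks $M_u,M_w$ never interact with $s_1$, $t_2$, or the cycle bars. Your count of the leftover edges is also slightly off (with the pendant arcs, the two endpoints of the deleted cycle edge have three leftover incidences, and a consistently oriented cycle is not a bar visibility digraph), but these are secondary to the placement issue.

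Your fallback via Lemma~\ref{depth2} does not repair this: $f(H)$ has no depth-$2$ $2$-bar representation at all. The vertex $z$ must see every vertex of $H_2$ and $H_3$, and $s_1$ must see $t_2$ and every vertex of $V(H)\cup V(M_z)$; in the corresponding interval picture a single interval for $z$ reaching the private sub-intervals of all eight vertices of $H_2$ necessarily covers points already lying in two $H_2$-intervals (the tight $2$-interval representation of $K_{5,3}$ already has depth $2$ internally), producing depth $3$. Equivalently, in any bar representation a channel through the long $s_1$-bar, a cycle bar for some $v$, and the long $t_2$-bar meets three bars. So the construction must be done directly in the plane, where ``displayed'' private channels let a single long bar below (or above) a block see all of its vertices without any depth constraint; this is exactly what the paper does.
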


\begin{proof}
Let $G=f(H)$.  Since $G$ contains $\tK_{5,3}$ as a subgraph, $G$ is non-planar.
Hence Theorem~\ref{barvdig} yields $b(G)\geq 2$.  Our approach is to develop
special $2$-bar visibility representations for subgraphs of $G$ and combine
them when $H$ has a spanning cycle to obtain a $2$-bar visibility
representation of $G$.

{\bf Step 1.} Construct displayed $2$-bar visibility representations of $H_1$,
$H_2$, and $H_3$ so that in $H_1$ vertex $s_1$ is assigned a rightmost bar, in
$H_2$ vertices $s_2$ and $t_2$ are assigned a rightmost bar and a leftmost bar,
respectively, and in $H_3$ vertex $t_3$ is assigned a leftmost bar.  These
representations are guaranteed by Lemma \ref{Kvisibility}.
Combine the representations of $H_1$, $H_2$, and $H_3$ as shown in
Figure~\ref{staircase}.  This incorporates the arcs $s_1t_2$ and $s_2t_3$
without using any extra bar and also does not introduce any unwanted arc.

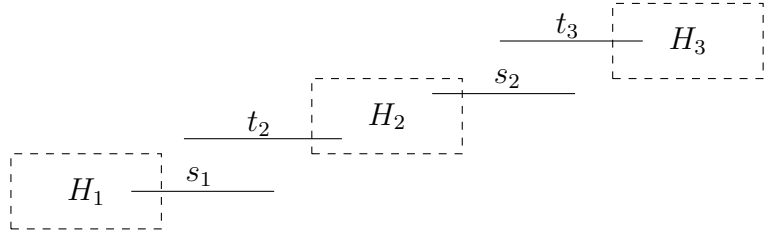
\begin{figure}[htbp]
\begin{center}
\begin{tikzpicture} [p/.style={circle,draw=black,fill=black,inner sep=0.5pt}]
\draw [dashed] (0,0) rectangle (2,1);
\draw [dashed] (4,1) rectangle (6,2);\draw [dashed] (8,2) rectangle (10,3);
\node  at (1,0.5){$H_1$};\node  at (5,1.5){$H_2$};\node  at (9,2.5){$H_3$};
\draw [-](1.6,0.5)--(3.5,0.5);
\draw [-](4.4,1.2)--(2.3,1.2);
\draw [-](5.6,1.8)--(7.5,1.8);
\draw [-](6.5,2.5)--(8.4,2.5);
\node  at (2.5,0.7){$s_1$};\node  at (3.3,1.4){$t_2$};
\node  at (6.6,2.0){$s_2$};\node  at (7.4,2.7){$t_3$};
\end{tikzpicture}
\caption{Representation of $H_1+H_2+H_3$.}
\label{staircase}
\end{center}
\end{figure}

{\bf Step 2.} Since $H$ is Hamiltonian, it decomposes into a spanning cycle $C$
and a perfect matching.  Construct a displayed representation of $C$ using one
bar for each vertex other than $z$ and two bars for $z$ (the leftmost and
rightmost bars).  Lemma~\ref{Kvisibility} provides a displayed $2$-bar
visibility representation of $M_z$ with $z'$ assigned the rightmost bar.
Place this representation on the left of the representation of $C$ to
incorporate the arc $zz'$ as shown in Figure \ref{HC}.

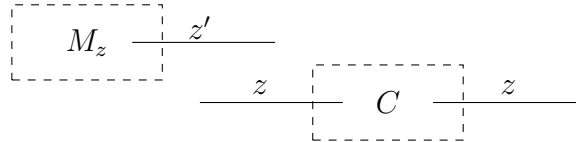
\begin{figure}[htbp]
\begin{center}
\begin{tikzpicture} [p/.style={circle,draw=black,fill=black,inner sep=0.5pt}]
\draw [dashed] (0,-0.7) rectangle (2,.3);
\draw [dashed] (4,-0.5) rectangle (6,-1.5);
\node  at (1,-0.2){$M_z$};\node  at (5,-1){$C$};
\draw [-](1.6,-0.2)--(3.5,-0.2);\draw [-](4.4,-1)--(2.5,-1);\draw [-](5.6,-1)--(7.5,-1);
\node  at (2.5,0.0){$z'$};\node  at (3.3,-0.8){$z$};\node  at (6.6,-0.8){$z$};
\end{tikzpicture}
\caption{Representation of $C+M_z$.}
\label{HC}
\end{center}
\end{figure}

{\bf Step 3.} Place the displayed representation of $C+M_z$ between the bars
for $s_1$ and $t_2$ to incorporate all arcs from $s_1$ to vertices in $C$ and
$M_z$, arcs from vertices in $C$ and $M_z$ to $t_2$.  By extending the
rightmost $z$-bar, we can represent all arcs from $z$ to vertices in $H_2$ and
$H_3$. See Figure \ref{H123CM}.

\begin{figure}[htbp]
\begin{center}
\begin{tikzpicture} [p/.style={circle,draw=black,fill=black,inner sep=0.5pt}]
\draw [dashed] (-2,-1.5) rectangle (0,-0.5);
\draw [dashed] (4,1.3) rectangle (6,2.3);
\draw [dashed] (8,2.4) rectangle (10,3.4);
\node  at (-1,-1){$H_1$};\node  at (5,1.8){$H_2$};\node  at (9,2.9){$H_3$};
\draw [-](-0.4,-1)--(3.5,-1);\draw [-](4.4,1.5)--(0.5,1.5);
\draw [-](5.6,2.1)--(7.5,2.1);\draw [-](6.5,2.9)--(8.4,2.9);
\node  at (1.5,-0.8){$s_1$};\node  at (2.3,1.7){$t_2$};
\node  at (6.6,2.3){$s_2$};\node  at (7.4,3.1){$t_3$};
\draw [dashed] (1,-0.2) rectangle (3,.8); \node  at (1.9,0.3){$C+M_z$};
\draw [-](2.8,0.3)--(11.5,0.3);\node  at (7.4,0.5){$z$};
\end{tikzpicture}
\caption{Representation of $H_1+H_2+H_3+C+M_z$.}
\label{H123CM}
\end{center}
\end{figure}
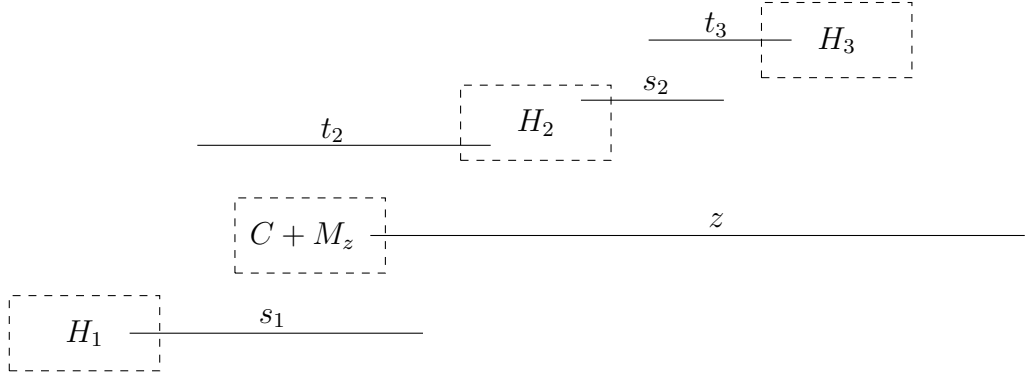

{\bf Step 4.} For each arc $uw$ in the perfect matching $E(H)-E(C)$
with $z\notin\{u,w\}$, construct  a displayed $2$-bar visibility representation
of $M_u+M_w+uw$ using two bars for each vertex in $M_u$ and $M_w$, one bar
for $u$, and one bar for $w$, as shown in Figure~\ref{Mu}.  If $z\in\{u,w\}$,
say $z=w$ by symmetry, then construct a displayed $2$-bar visibility
representation of $M_u+u$ and place it on the right side of the representation
of $H_1+H_2+H_3+C+M_z$ (Figure~\ref{H123CM}) to incorporate the arc $uz$
without introducing an extra bar for $z$.

Steps 1--4 complete a $2$-bar visibility representation of $G$.
\end{proof}

\begin{figure}[htbp]
\begin{center}
\begin{tikzpicture} [p/.style={circle,draw=black,fill=black,inner sep=0.5pt}]
\draw [dashed] (0,0) rectangle (2,1);
\draw [dashed] (7,0.2) rectangle (9,-0.8);
\node  at (1,0.5){$M_w$};\node  at (8,-0.3){$M_u$};
\draw [-](1.6,0.5)--(3.6,0.5);\draw [-](7.4,-0.3)--(5.4,-0.3);
\node  at (2.5,0.7){$w'$};\node  at (6.4,-0.1){$u'$};

\draw [-](2.6,-0.3)--(4.6,-0.3);\draw [-](6,-1.2)--(4,-1.2);
\node  at (3.5,-0.1){$w$};\node  at (5,-1){$u$};
\end{tikzpicture}
\caption{Representation of $M_u+M_w+uw$.}
\label{Mu}
\end{center}
\end{figure}
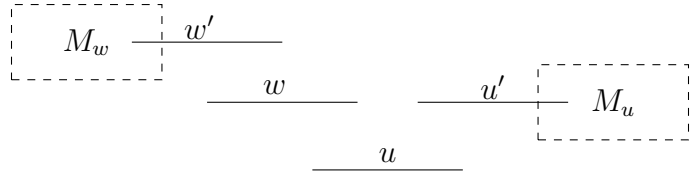

\begin{definition}\label{entire}
A bar $A$ {\it covers} a bar $B$ (above or below) if the projection of $B$ on
the horizontal axis is contained in the projection of $A$ on the axis.
\end{definition}

\begin{lemma}\label{TH}
Let $H$ be a $3$-regular triangle-free graph.
If $b(f(H))=2$, then $H$ is Hamiltonian.
\end{lemma}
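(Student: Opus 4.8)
The plan is to take any $2$-bar visibility representation $\mathcal R$ of $G=f(H)$ and extract from it a Hamiltonian cycle of $H$. First I would use Lemma~\ref{Kvisibility}: each of the copies $H_1,H_2,H_3$ and each $M_v$ is an induced $\widetilde K_{5,3}$, so in $\mathcal R$ the bars for each such copy are assigned two disjoint bars per vertex and appear contiguously, occupying an interval of horizontal positions that (being $2$-bar tight, hence depth-$2$) admits no other bar of $G$ strictly inside it. So each copy behaves like an ``opaque box'': any bar of a vertex outside the copy that sees into the copy must do so from the left or right end of the box. In particular the pendant arc $vv'$ forces the bar(s) of $v$ to lie at the very left or right edge of the box $M_v$, and similarly the arcs $s_1v$ and $vt_2$ force $v$'s bars to be ``reachable'' from $s_1$ below and $t_2$ above through unblocked channels not obstructed by the boxes.

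The core of the argument is a counting/planarity squeeze. Each $v\in V(H)$ has indegree contribution from $s_1$ (arc $s_1v$) plus its in-arcs within $H$, and outdegree contribution to $t_2$ (arc $vt_2$) plus its out-arcs within $H$ and the arc $vv'$ into $M_v$. Since $H$ is $3$-regular, the total is small, and I would argue that with only two bars per vertex, and with $s_1$ needing to see \emph{all} of $V(H)$ from below while $t_2$ is seen from \emph{all} of $V(H)$ from above, the bars for $V(H)$ together with the bar for $s_1$ (extended) and the bar for $t_2$ essentially force a linear left-to-right order on the vertices of $H$. The special vertex $z$ additionally must see every vertex of $H_2\cup H_3$ from below, which (because $H_2$ and $H_3$ are opaque boxes stacked via the $s_1t_2,s_2t_3$ arcs as in Figure~\ref{staircase}) forces $z$'s bar(s) to be extremal in this order. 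Using Definition~\ref{entire}, ``covers'' lets me phrase the obstruction cleanly: no bar of $G$ may be covered by a box, so the boxes partition the usable horizontal space into slots, and the $M_v$'s must sit in a sequence of slots with the $H$-arcs realized only between consecutively placed $v$'s; an arc $v_iv_j$ of $H$ realized in $\mathcal R$ then corresponds to adjacency of their slots in the left-to-right order, which traces out a cycle through all of $V(H)$.

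More concretely, the key steps in order: (1) invoke Lemma~\ref{Kvisibility} to make all nine-plus copies opaque depth-$2$ boxes and record that $s_1$ is a source into all of $V(H)$ (and into all of $H_2,H_3$ only via $z$) while $t_2$ is a sink from all of $V(H)$; (2) show the bar for $s_1$ must be a single bar lying below everything, and $t_2$ a single bar above everything, so all bars for $V(H)$ lie in the horizontal strip ``between'' $s_1$ and $t_2$ in a well-defined cyclic or linear left-to-right arrangement; (3) show each $M_v$ forces $v$'s two bars to be consecutive in this arrangement with $M_v$ hanging off one side, so contracting each $M_v$-slot gives a circular sequence of the $n$ vertices; (4) observe that the arc $s_1v$ for every $v$ together with the boxes $M_v$ leaves, between two cyclically consecutive vertices in the arrangement, room for at most one $H$-arc to be realized, and $3$-regularity plus the fact that every edge of $H$ incident to a given $v$ must be realized forces exactly the two ``neighbors in the arrangement'' edges at each $v$ to be $H$-edges — i.e.\ the arrangement, read cyclically, is a Hamiltonian cycle of $H$; (5) use $z$'s extra arcs to $H_2\cup H_3$ to verify consistency (that the cyclic arrangement closes up rather than forcing a contradiction).

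The main obstacle I anticipate is step (4): rigorously arguing that the horizontal region strictly between two consecutive $M_v$-boxes, once $s_1$'s bar (which sees all of $V(H)$ and hence must weave below all the boxes) and $t_2$'s bar are in place, has enough vertical room for \emph{at most one} pair of $H$-bars to see each other — this is the place where the specific parameters ($\widetilde K_{5,3}$, $3$-regularity, triangle-freeness of $H$) must all be used, triangle-freeness presumably to prevent three mutually visible $H$-vertices from collapsing into one slot. I would handle it by a careful depth-$2$ and planarity accounting on the derived graph restricted to $V(H)\cup\{s_1,t_2\}$ and the box-endpoints, analogous to the Euler-formula bookkeeping in the proof of Theorem~\ref{lower}, pinning down that any additional realized $H$-edge beyond the ``consecutive'' ones would create a forbidden crossing or a channel meeting three bars. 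Everything else is bookkeeping with Lemma~\ref{Kvisibility} and Lemma~\ref{depth2}.
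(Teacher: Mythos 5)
Your high-level plan (treat each $\widetilde K_{5,3}$ copy as a rigid contiguous block via Lemma~\ref{Kvisibility}, then read a Hamiltonian cycle off the left-to-right order of bars) starts in the right place, but the core of your argument is built on a picture that cannot be correct, and the step you yourself flag as the obstacle is where it breaks. The missing idea is the \emph{dichotomy between the two bars of each $v\in V(H)\setminus\{z\}$}: one bar $\overline v$ must lie horizontally \emph{inside} the contiguous block $\Psi(H_1+H_2+H_3)$, between the bars for $s_1$ and $t_2$, in order to realize both $s_1v$ and $vt_2$; the other bar $\underline v$ must lie \emph{outside} that block, next to $M_v$, in order to realize $vv'$ (no vertex of $M_v$ other than $v'$ is adjacent to $v$, so $v$'s bar must be an end bar of the $M_v$-block). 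Your step (3), in which both bars of $v$ sit consecutively in one ``slot'' next to $M_v$, contradicts this: a bar in such a slot cannot see both $s_1$ and $t_2$. Once the inside/outside split is in place, the proof is local, not an Euler-formula count: one shows (using triangle-freeness and the ``covers'' notion of Definition~\ref{entire}) that $\underline v$ can see bars for at most one vertex of $H$ (it is forced to be an inner bar of the contiguous block $\Psi(M_v+M_w+vw)$) and that $\overline v$ can see bars for at most two vertices of $H$ (a third would force some $\overline y$ to be covered by $\overline v$, blocking $s_1y$ or $yt_2$). Then $3$-regularity forces equality, the inside bars form a spanning path whose two ends are the bars for $z$, and this closes into a Hamiltonian cycle.

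Your step (4) also has an arithmetic inconsistency that signals the gap: $H$ has $3n/2$ edges, but a cyclic arrangement of $n$ slots with ``at most one $H$-arc between consecutive slots'' can realize at most $n$ of them, so your scheme cannot account for all edges of $H$. The resolution is exactly the inside/outside split: the $n$ inside bars realize the $n$ cycle edges, and the outside bars (paired up next to the $M_v$-gadgets) realize the remaining $n/2$ perfect-matching edges, one per outside bar. Finally, your proposed repair for step (4) --- a global planarity/Euler bookkeeping in the style of Theorem~\ref{lower} --- would at best bound the total number of realized visibilities and cannot deliver the pointwise statement actually needed, namely that \emph{each} inside bar sees exactly two $H$-neighbors and \emph{each} outside bar exactly one; without that per-vertex conclusion you cannot concatenate the inside bars into a single spanning path ending at $z$ on both sides.
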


\begin{proof}
Let $G=f(H)$, and let $\Psi$ be a $2$-bar visibility representation of $G$.
For $F\esub G$, let $\Psi(F)$ denote the set of bars representing $V(F)$ in
$\Psi$.  By Lemma \ref{Kvisibility}, each $H_i$ is $2$-bar tight, and in any
$2$-bar visibility representation of $G$ the bars for vertices of $H_i$ appear
contiguously.  By left-right symmetry, we may assume that bars for $H_1$
are to the left of those for $H_2$ and $H_3$.  Since there is only one arc
$s_it_{i+1}$ joining $H_i$ and $H_{i+1}$, avoiding unwanted visibilities
requires this arc to be represented by a visibility between the rightmost bar
in $\Psi(H_i)$ and the leftmost bar in $\Psi(H_{i+1})$.  Now $H_1+H_2+H_3$ must
be represented in $\Psi$ as shown in Figure~\ref{staircase}.  Note that
$\Psi(H_1+H_2+H_3)$ is contiguous, and the bars establishing $s_1t_2$ and
$s_2t_3$ are inner bars (not leftmost or rightmost bars) in $\Psi(H_1+H_2+H_3)$.

For each $v\in V(H)-\{z\}$, in order to avoid unwanted visibilities, we need
two bars for $v$: one bar $\overline{v}$ between bars for $s_1$ and $t_2$ in
$\Psi(H_1+H_2+H_3)$ to establish arcs $s_1v$ and $vt_2$, and one bar
$\underline{v}$ outside the horizontal extent of $\Psi(H_1+H_2+H_3)$ to
establish the arc $vv'$ (because $v'$ is not incident to any vertex in $H_i$).
Two claims about the bars that can be seen by $\overline{v}$ and
$\underline{v}$ will enable us to extract a spanning cycle in $H$ using the
bars of the form $\overline{v}$.

\smallskip
\noindent {\bf Claim 1:} {\it If $v\in V(H)-\{z\}$, then $\underline{v}$ sees a
bar for at most one vertex in $H$.}

Suppose that $\underline{v}$ sees a bar $\underline{p}$ for vertex
$p\in V(H)-\{z\}$.  Also $\underline{v}$ and $\underline{p}$ must see bars
for $v'$ and $p'$.  These must be end bars in $\Psi(M_v)$ and $\Psi(M_p)$,
since $v$ has no neighbor in $M_v$ other than $v'$, and similarly for $p$.
One end of $\underline{v}$ and one end of a bar for $v'$ establish $vv'$, and
similarly, one end of $\underline{p}$ and one end of a bar for $p'$ establish
$pp'$.  The other ends of $\underline{v}$ and $\underline{p}$ must be used to
represent $vp$ (or $pv$).  Thus $\underline{v}$ and $\underline{p}$ are inner
bars in $\Psi(M_v+M_p+vp)$ (or $\Psi(M(v)+M(p)+pv)$), which appears
contiguously (see Figure~\ref{Mu}).

Suppose that $\underline{v}$ also sees a bar for $q\in V(H)$.  If $q\ne z$,
then by the preceding paragraph $\underline{q}$ introduces an unwanted
visibility with something in $M_v+vv'$ or $M_p+pp'$.

Hence we may assume $q=z$.  Since $\underline{v}$ is an inner bar in
$\Psi(M_v+M_p+vp)$, and among those vertices $z$ can only be adjacent to $v$ and
$p$ (but not both, since $H$ has no triangle), the bar for $z$ seen by
$\underline{v}$ must be covered by $\underline{v}$ and see only $\underline{v}$.
Now the other bar $\hat z$ for $z$ must see bars for $z'$ and for two other
neighbors of $z$ in $H$.  Since $H$ is triangle-free, the inside bar
$\overline{w}$ for some vertex $w\in V(H)$ must now be covered by $\hat z$.
This obstructs the visibility between $\overline{w}$ and the bar for one of
$s_1$ or $t_2$, preventing one of $s_1w$ and $wt_2$ from being established.
The contradiction completes the proof of Claim 1.

\smallskip
\noindent {\bf Claim 2:} {\it If $v\in V(H)-\{z\}$, then $\overline{v}$ sees
bars for at most two vertices in $H$.}

Note that $\overline{v}$ cannot see both bars for a vertex $w\in H-\{z\}$,
since only one bar for $w$ is in $\Psi(H_1+H_2+H_3)$.  If $\overline{v}$ sees
both bars for $z$, then $\overline{v}$ and the two bars for $z$ must occur as
shown in Figure~\ref{vz}. The left bar for $z$ must see one end of
$\overline{v}$, because we use this bar to see $z'\in M_z$, and no other vertex
in $M_z$ is adjacent to $z$.  The right bar for $z$ must see the other end of
$\overline{v}$, because we use this $z$-bar to incorporate the arcs from $z$ to
$V(H_2)\cup V(H_3)$.  Since $H$ is $3$-regular, and by Claim 1 $\underline{v}$
establishes at most one arc involving a neighbor of $v$ besides $z$, at least
one neighbor $x$ of $v$ in $H$ still needs $\overline{x}$ to establish an arc
involving $x$ and $v$.  Because $H$ is triangle-free, $xz\notin E(H)$, so
$\overline{x}$ will be blocked by $\overline{v}$ from establishing the arc
$s_1x$ or the arc $xt_2$.  Hence $\overline{v}$ cannot see both bars for one
other vertex of $H$.

\begin{figure}[htbp]
\begin{center}
\begin{tikzpicture}
\draw [-](7.4,-0.3)--(5.4,-0.3);
\node  at (6.4,-0.1){$z$};
\draw [-](2.6,-0.3)--(4.6,-0.3);\draw [-](6,.5)--(4,.5);
\node  at (3.5,-0.1){$z$};\node  at (5,0.7){$v$};
\end{tikzpicture}
\caption{Representation of $zv$.}
\label{vz}
\end{center}
\end{figure}
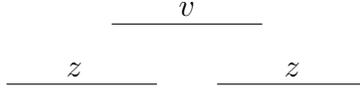

If $\overline{v}$ sees bars for three distinct vertices in $H$, then one of
them (say $\overline{y}$) must be covered by $\overline{v}$ (because $H$ is
triangle-free), which prevents $\overline{y}$ from establishing one of
$\{s_1y,yt_2\}$.

\smallskip
{\bf Conclusion:}
Since $H$ is $3$-regular, the consequence of Claims 1 and 2 is that for
$v\in V(H)-\{z\}$, the inside bar $\overline{v}$ sees bars for exactly two
neighbors of $v$ in $H$, and the outside bar $\underline{v}$ sees a bar for
exactly one neighbor of $v$ in $H$.  We can therefore follow the inside
bars from left to right as a path, with both ends of this path of bars being
bars for $z$.  This produces a Hamiltonian cycle in $H$.
\end{proof}

\begin{lemma}[\cite{West84}]\label{3HC}
Determining whether a $3$-regular triangle-free graph contains a Hamilton
cycle is NP-complete.
\end{lemma}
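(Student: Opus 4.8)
The plan is to reduce from the Hamiltonian cycle problem in $3$-connected cubic graphs, which Garey, Johnson, and Stockmeyer proved NP-complete. Membership in NP is immediate, since a Hamiltonian cycle is a polynomially verifiable certificate. So fix a $3$-connected cubic graph $G$; we may assume $|V(G)|\ge 5$, as the only $3$-connected cubic graph on four vertices is $K_4$, which is Hamiltonian. I would first record the structural fact that in such a $G$ the triangles are pairwise vertex-disjoint and the three non-triangle edges at any triangle go to three distinct outside vertices: two triangles sharing a vertex would force degree at least $4$ there; two triangles sharing an edge $xy$ would make the common-neighbor pair of $x$ and $y$ a $2$-cut isolating $\{x,y\}$; and two external edges of a triangle $xyz$ reaching a common vertex $v$ would make $\{z,v\}$ a $2$-cut isolating the two endpoints of those edges inside the triangle. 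Thus the local surgeries below cannot interfere.

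Next I would introduce a fixed triangle-free \emph{gadget} $\Gamma$ on seven vertices: take a $6$-cycle $p_1q_1p_2q_2p_3q_3$ and add a vertex $w$ joined to $q_1,q_2,q_3$; the vertices $p_1,p_2,p_3$ are its \emph{ports}. In $\Gamma$ each port has degree $2$, every other vertex has degree $3$, the girth is $6$, no two ports are adjacent (the $6$-cycle alternates ports and non-ports), and cyclically permuting the indices is an automorphism. The test graph $G'$ is obtained from $G$ by deleting the vertices of every triangle, inserting a private copy of $\Gamma$ for each deleted triangle, and, for every non-triangle edge $uv$ of $G$, re-attaching it at a port of $u$'s copy when $u$ lay in a triangle and at a port of $v$'s copy when $v$ did, using a distinct port for each of a triangle's three external edges. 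Then $G'$ is cubic, and $G'$ is triangle-free: each copy of $\Gamma$ is triangle-free, a re-attachment edge joins a port to a vertex outside that copy while the port's remaining two neighbors lie inside the copy, and no two ports of a copy are adjacent, so no three mutually adjacent vertices can occur. Clearly $G'$ has $O(|V(G)|)$ vertices and is computable in polynomial time.

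For correctness, take a Hamiltonian cycle $C$ of $G'$ and any copy $\Gamma$, with external edges $\varepsilon_1,\varepsilon_2,\varepsilon_3$ at its ports, and set $S=\{i:\varepsilon_i\in C\}$. Every non-port vertex of $\Gamma$ has both its $C$-edges inside $\Gamma$, and a port $p_i$ has exactly one $C$-edge inside $\Gamma$ precisely when $i\in S$; hence $C$ restricted to $\Gamma$ is a disjoint union of cycles and paths whose path-ends are exactly the ports in $S$, so $|S|$ is even. Because $C$ is a single cycle and there are vertices outside this copy, the restriction can contain no cycle and must be nonempty, which forces $|S|=2$ and the restriction to be a Hamiltonian path of $\Gamma$ between two ports. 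A triangle $xyz$ in $G$ behaves the same way: a Hamiltonian cycle of $G$ uses exactly two of its external edges, say at $x$ and $y$, and passes through $z$ between them. Since $\Gamma$ has a Hamiltonian path between every pair of ports -- for example $p_1,q_3,p_3,q_2,w,q_1,p_2$ from $p_1$ to $p_2$, the other pairs following from the index-permuting automorphism -- the admissible ``local patterns'' at a copy of $\Gamma$ and at the triangle it replaced are the same, and splicing these patterns shows that $G'$ is Hamiltonian if and only if $G$ is.

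The main work, and the expected obstacle, is the bookkeeping about how copies of $\Gamma$ meet the rest of $G'$: confirming that distinct triangles stay genuinely independent, that re-attachment never creates a short cycle even when several external edges run between two triangles so that two gadgets get joined by two or three port-to-port edges, and that splicing the local Hamiltonian patterns yields one spanning cycle rather than several disjoint ones. Restricting the source problem to $3$-connected cubic graphs is exactly what keeps this clean; for arbitrary cubic graphs one would first have to handle $K_4$-components and triangles sharing an edge.
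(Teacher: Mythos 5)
The paper does not prove this lemma at all: it is quoted directly from West and Shmoys \cite{West84}, so there is no internal argument to compare against. Your proposal supplies an explicit, self-contained reduction, and after checking it I believe it is correct. The base problem (Hamiltonian cycle in $3$-connected cubic graphs) is indeed NP-complete (this is due to Garey, Johnson, and Tarjan for the planar $3$-connected cubic case, which suffices; the attribution to Garey--Johnson--Stockmeyer is a minor slip). Your structural claims under $3$-connectivity with at least five vertices are right and are genuinely needed -- in a merely cubic graph two triangles can share an edge (a diamond), which would break the "delete each triangle" surgery, so restricting the source problem is the correct move. The gadget $\Gamma$ works as claimed: it is triangle-free, cubic except at the three pairwise nonadjacent ports, admits a Hamiltonian path between any two ports, and the parity/connectivity argument forces any Hamiltonian cycle of $G'$ to traverse each copy as a single port-to-port Hamiltonian path using exactly two external edges, exactly mirroring how a Hamiltonian cycle of $G$ traverses a triangle. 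The splicing in both directions replaces local paths with local paths having the same endpoints, so a single spanning cycle maps to a single spanning cycle. One small factual error: $\Gamma$ has girth $4$, not $6$ (e.g.\ $w q_1 p_2 q_2 w$ is a $4$-cycle), but this is immaterial since only triangle-freeness is used. In short, you have proved a cited black-box result from scratch by a standard gadget reduction; whether it coincides with the West--Shmoys construction cannot be judged from this paper, but as a proof of the stated lemma it stands.
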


\begin{theorem}\label{2NPC}
Determining whether a digraph has the bar visibility number $2$ is NP-complete.
\end{theorem}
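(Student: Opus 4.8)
The plan is to establish Theorem~\ref{2NPC} by combining the polynomial-time construction of the test digraph $f(H)$ with the two implications proved in Lemmas~\ref{HT} and~\ref{TH}, reducing from the NP-complete problem of Lemma~\ref{3HC}. First I would dispatch membership in NP: given a digraph $G$, a witness is a $2$-bar visibility representation, which can be described by rational coordinates for the (at most $2|V(G)|$) bars, and verifying that the induced visibilities are exactly the arcs of $G$ is a straightforward polynomial-time check (comparing horizontal extents and checking for blocking bars in each candidate channel). So $\{G : b(G)=2\}$ is in NP; one should note $b(G)=2$ means $b(G)\le 2$ and $b(G)\ne 1$, and the latter is decidable in polynomial time via Theorem~\ref{barvdig}, so the decision problem ``$b(G)=2$'' is equivalent up to a polynomial-time step to ``$b(G)\le 2$.''

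Next I would set up the reduction. Given an instance $H$ of the Hamiltonian cycle problem in $3$-regular triangle-free graphs, form the test digraph $G=f(H)$ as in the Definition preceding Figure~\ref{TGraph G}. As already observed in the excerpt, if $H$ has $n$ vertices then $f(H)$ has $9n+24$ vertices and $39n/2+78$ edges, and $f(H)$ is computable in time polynomial in $|H|$. The correctness of the reduction is exactly the biconditional ``$H$ is Hamiltonian $\iff b(f(H))=2$.'' The forward direction is Lemma~\ref{HT}, which gives $b(f(H))=2$ when $H$ is Hamiltonian (using that $f(H)$ contains $\tK_{5,3}$, hence is non-planar, so $b(f(H))\ge 2$ by Theorem~\ref{barvdig}, together with the explicit $2$-bar construction from the spanning cycle). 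The reverse direction is Lemma~\ref{TH}: if $b(f(H))=2$ then $H$ is Hamiltonian. Stringing these together: $H\in$ (3-regular triangle-free Hamiltonian) $\iff b(f(H))=2$, so a polynomial-time algorithm deciding ``$b(G)=2$'' would decide Hamiltonicity of $3$-regular triangle-free graphs in polynomial time, contradicting Lemma~\ref{3HC} unless P${}={}$NP. Combined with membership in NP, this shows the problem is NP-complete.

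The only real obstacle is that all the substantive work has already been packaged into Lemmas~\ref{HT} and~\ref{TH}, so the proof of the theorem itself is a short assembly; the subtle point worth spelling out is the NP-membership argument, namely that a $2$-bar visibility representation admits a polynomial-size certificate. Here I would invoke the standard fact that visibility representations can be normalized to have polynomially bounded integer coordinates (e.g.\ by a sweep that records only the cyclic order of left and right endpoints of the at most $2|V(G)|$ bars and then places endpoints at distinct integers), so that a candidate representation is verifiable in polynomial time. I would also remark that if one prefers to state the problem as ``$b(G)\le 2$'' rather than ``$b(G)=2$,'' the reduction is unchanged, since $f(H)$ is non-planar and hence never has $b=1$; this makes both formulations NP-complete. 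With these remarks in place, the proof is:

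\begin{proof}
Membership in NP is witnessed by a $2$-bar visibility representation of $G$: by a standard normalization, such a representation (if one exists) may be taken to assign to the at most $2|V(G)|$ bars horizontal extents with distinct integer endpoints of polynomially bounded size, and from such data one checks in polynomial time that the visibilities realized are precisely the arcs of $G$. Since $b(G)\ge 2$ fails only when $G$ is a bar visibility digraph, which is decidable in polynomial time by Theorem~\ref{barvdig}, deciding ``$b(G)=2$'' is polynomial-time equivalent to deciding ``$b(G)\le 2$,'' and both lie in NP.

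For hardness, reduce from the problem of Lemma~\ref{3HC}. Given a $3$-regular triangle-free graph $H$, construct the test digraph $G=f(H)$; this takes time polynomial in $|H|$, and $G$ has $9|V(H)|+24$ vertices. Because $G$ contains $\tK_{5,3}$ as a subgraph, $G$ is non-planar, so $b(G)\ge 2$ by Theorem~\ref{barvdig}. If $H$ is Hamiltonian, then $b(G)=2$ by Lemma~\ref{HT}. Conversely, if $b(G)=2$, then $H$ is Hamiltonian by Lemma~\ref{TH}. Hence $H$ has a Hamiltonian cycle if and only if $b(f(H))=2$. A polynomial-time algorithm for deciding whether a digraph has bar visibility number $2$ would therefore decide Hamiltonicity of $3$-regular triangle-free graphs in polynomial time, which by Lemma~\ref{3HC} is impossible unless P${}={}$NP. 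Together with membership in NP, this shows the problem is NP-complete.
\end{proof}
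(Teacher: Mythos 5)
Your proof is correct and takes essentially the same route as the paper: the reduction from Hamiltonicity of $3$-regular triangle-free graphs via the test digraph $f(H)$, with correctness supplied by Lemmas~\ref{HT} and~\ref{TH} and hardness by Lemma~\ref{3HC}. Your additional remarks on NP membership and polynomial-size certificates are a reasonable supplement that the paper leaves implicit, but they do not change the substance of the argument.
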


\begin{proof} From Lemmas \ref{HT} and \ref{TH}, $b(f(H))=2$ if and only if $H$
is Hamiltonian.  The claim then follows from Lemma \ref{3HC}.
\end{proof}

Furthermore, testing $b(G)\le t$ for digraphs is NP-complete for any fixed $t$
with $t\ge2$.  By constructing a digraph $\widetilde{G}$ from $G$ such that
$\widetilde{G}$ has a $t$-bar visibility representation if and only if $G$ has
a $(t-1)$-bar visibility representation, one can reduce $(t-1)$-bar visibility
representation to $t$-bar visibility representation.  The claim then follows
from Theorem \ref{2NPC}.  Let $G$ be an arbitrary digraph, and let
$\widetilde{G}$ be a digraph whose underlying graph is obtained from $G$ by
adding three copies of $K_{t^2+t-1,t+1}$ for each vertex $v\in V(G)$, with
one edge joining $v$ to the central copy and one edge joining each of the
two other copies to the central copy, their endpoints in the central copy
being adjacent (see Figure 5 and Theorem 2 of \cite{West84}).  In
$\widetilde{G}$, edges in each copy of $K_{t^2+t-1,t+1}$ are oriented from one
part to the other, and other edges are oriented arbitrarily.

\end{document}